\newtheorem{condition}{Condition}
\newtheorem{example}{Example} 
\newtheorem{theorem}{Theorem}
\newtheorem{lemma}{Lemma} 
\newtheorem{proposition}{Proposition} 
\newtheorem{remark}{Remark}
\newcommand{\yc}[1]{{\color{black} #1}}
\newcommand\numberthis{\addtocounter{equation}{1}\tag{\theequation}}
\begin{document}
\title{Statistical Inference for
Noisy Incomplete \yc{Binary} Matrix}

\author{\name Yunxiao Chen \email y.chen186@lse.ac.uk  \\
       \addr Department of Statistics\\ 
       London School of Economics and Political Science\\
        London WC2A 2AE, UK
       \AND
       \name Chengcheng Li \email lccvic@umich.edu \\
        \name Jing Ouyang \email jingoy@umich.edu  \\
        \name Gongjun Xu \email gongjun@umich.edu \\
       \addr Department of Statistics\\
       University of Michigan\\
       Ann Arbor, MI 48109, USA
     }
\editor{Ali Shojaie}

\maketitle

\begin{abstract}
We consider the statistical inference for noisy incomplete \yc{binary (or 1-bit)} matrix.
Despite the importance of uncertainty quantification to matrix completion, most of the categorical matrix completion literature focuses on point estimation and prediction. 
This paper moves one step further toward the statistical inference for \yc{binary} matrix completion. 
Under a popular  nonlinear factor analysis model, we obtain a point estimator and derive its asymptotic normality.  
Moreover, our analysis adopts a flexible missing-entry design that does not require a random sampling scheme as required by most of the existing asymptotic results for matrix completion.
Under reasonable conditions, the proposed estimator is statistically efficient and optimal in the sense that the Cramer-Rao lower bound is achieved asymptotically for the model parameters. 
Two applications are considered, including (1) linking  two forms of an educational test
 and  (2) linking the roll call voting records from multiple years in the United States Senate. 
The first application enables the comparison between examinees who took different test forms, and 
the second application allows us to compare the liberal-conservativeness of senators who did not serve in the Senate at the same time. 
\end{abstract}

\begin{keywords}
1-bit matrix; Matrix completion; Binary data; Asymptotic normality; Nonlinear latent variable model.
\end{keywords}

\section{Introduction}\label{sec-intro}
Noisy low-rank matrix completion is concerned with the recovery of a low-rank matrix when only a fraction of noisy entries are observed. 
This topic has received much attention 
as a result of its vast applications in practical contexts such as collaborative filtering \citep{goldberg1992}, system identification \citep{liu2010} and sensor localization \citep{biswas2006}. While the majority of the literature considers the completion of real-valued observations 
\citep{candes2009exact, candes2010power,  keshavan2010matrix, koltchinskii2011, negahban2012,   chen2020noisy}, many practical problems involve 
categorical-valued matrices, such as the 
famous Netflix challenge. Several works have been done on matrix completion involving categorical variables, including 
\citet{davenport20141} and \citet{bhaskar20151} for 1-bit matrix \yc{whose  entries take binary values}, and 
\citet{klopp2015adaptive} and \citet{bhaskar2016probabilistic}
 for categorical matrix, 
and \citet{chen2022determining} for matrix of binary, count, and continuous variables. 
In these works, low-dimensional nonlinear probabilistic models are assumed. 



Despite the importance of uncertainty quantification to matrix completion, 
most of the matrix completion literature focuses on point estimation and prediction, while statistical inference has received attention only recently. Specifically,  \citet{chen2019inference} and \citet{xia2021statistical} considered statistical inference under the linear models and derived asymptotic normality results. The statistical inference for categorical matrices is more challenging due to the involvement of nonlinear models.  To our best knowledge, no work has been done to provide statistical inference for the completion of categorical matrices. 
In addition to nonlinearity, another challenge in modern theoretical analysis of matrix completion concerns  the double asymptotic regime where both the numbers of rows and columns are allowed to grow to infinity. Under this asymptotic regime, both the dimension of the parameter space and the number of observable entries grow with the numbers of rows and columns. However,  existing theory on the statistical inference for diverging number of parameters 
\citep{portnoy1988asymptotic,he2000parameters,wang2011gee} 
is not directly applicable, as the dimension of the parameter space in the current problem grows faster  than that is typically needed for asymptotic normality; see Section~\ref{sec-asym-properties} for further discussions. 


In this paper, we move one step further toward statistical inference for the completion of categorical matrices. 
Specifically, we consider the inference for binary matrix completion under a unidimensional nonlinear factor analysis model with the logit link.
Such a nonlinear factor model is one of the most popular models for multivariate binary data, and it has received much attention from the theoretical perspective \citep{andersen1970asymptotic,haberman1977maximum,lindsay1991semiparametric,rice2004equivalence}, as well as wide applications in various areas, including educational testing \citep{van2013handbook}, word acquisition analysis \citep{kidwell2011statistical}, syntactic comprehension \citep{gutman2011rasch}, and analysis of health outcomes  \citep{hagquist2017recent}. 
It is also referred to as the Rasch model \citep{rasch1960studies} in the psychometrics literature. 
Despite the popularity and extensive research of the model, its use for binary matrix completion and related statistical inferences for the latent factors and model parameters have not been explored.  
The considered nonlinear factor model  is also closely related to the Bradley-Terry model \citep{bradley1952rank,simons1999asymptotics, han2020asymptotic, gao2021uncertainty} for directed random graphs  and 
the $\beta$-model \citep{chatterjee2011random, yan2011wilks, rinaldo2013maximum} for undirected random graphs. 
In fact, the considered model can be viewed as a Bradley-Terry model or $\beta$-model for bipartite graphs \citep{rinaldo2013maximum}. 
{However, the asymptotic analysis of bipartite graphs
concerns a rectangle matrix which involves two diverging indices -- the numbers of rows and columns of the data matrix, while a standard random graph concerns a square matrix which only involves one diverging index. Thus, 
more refined analysis is needed in the asymptotic analysis of bipartite graphs, in order to approximate the asymptotic variance of the model parameters and derive  conditions under which consistency and asymptotic normality results hold. }



Specifically, we introduce a likelihood-based estimator under the nonlinear factor analysis model for binary matrix completion. Under a very flexible missing-entry setting that  does not require a random sampling scheme, asymptotic normality results are established that allow us to draw statistical inferences. 
%
These results  suggest that our estimator is asymptotically efficient and optimal, 
in the sense that the Cramer-Rao lower bound is achieved for model parameters. 
The proposed method and theory are applied to two real-world problems, including (1) linking  two forms of a college admission test that have common items and (2)  linking the voting records from multiple years in the United States Senate. In the first application, the proposed method allows us to answer the question ``for examinees A and B who took different test forms, would examinee A perform significantly better than examinee B if they had taken the same test form?". In the second application, it can answer the questions such as  ``Is Republican senator Marco Rubio significantly more conservative than Republican senator Judd Gregg?".
Note that Marco Rubio and Judd Gregg had not served in the United States Senate at the same time. 
We point out that the entry missingness in these applications does not satisfy  the commonly assumed random sampling schemes for matrix completion. 

The rest of the paper is organized as follows. In Section~\ref{sec-parameter-est}, we introduce the considered factor model and discuss its application to binary matrix completion. In Section~\ref{sec-asym-properties}, we establish the asymptotic normality 
for the maximum likelihood estimator. A simulation study is given in Section~\ref{sec-simulation}, and two real-data applications are presented in Section~\ref{sec-real-data}. We conclude with discussions on the limitations of the current work and future directions in Section~\ref{sec-discussion}. All the proofs for the theoretical results developed in the article and additional real-data application results are included in the appendices. The R code for our numerical experiments can be found in {https://github.com/Austinlccvic/A-Note-on-Statistical-Inference-for-Noisy-Incomplete-1-Bit-Matrix}.

{Throughout the paper, we adopt the following notations. For positive sequences $\{a_n\}$ and $\{b_n\}$, we denote $a_n \lesssim b_n$ if there exists a constant $C>0$ that $a_n \leq C b_n$ for all $n$. We denote $a_n \asymp b_n$ if $a_n \lesssim b_n$ and $b_n \lesssim a_n$. We denote $a_n \ll b_n$ if $b_n/a_n \rightarrow \infty$ as $n \rightarrow \infty$.} 

\section{Model and Estimation}\label{sec-parameter-est}

Let $Y$ be a binary (or 1-bit) matrix with $N$ rows and $J$ columns and $Y_{ij} \in \{0,1\}$ be the entries of $Y$, $i = 1, ..., N$, and $j = 1, ..., J$. Some entries of $Y$ are not observable. We use $z_{ij}$ to indicate the missing status of entry $Y_{ij}$, where $z_{ij} = 1$ indicates that $Y_{ij}$ is observed and $z_{ij} = 0$ otherwise. We let $Z=(z_{ij})_{N\times J}$ be the indicator matrix for data missingness. The main goal of binary matrix completion is to estimate 
$E(Y_{ij}\vert z_{ij} = 0)$.  

This problem is typically tackled under a probabilistic model \citep[see e.g.,][]{cai2013max, davenport20141,bhaskar20151,chen2022determining}, which assumes that $Y_{ij}$, $i = 1, ..., N$, $j = 1, ..., J$, are independent Bernoulli random variables, with success probability 
$\exp(m_{ij})/\{1 + \exp(m_{ij})\}$ or $\Phi(m_{ij})$, where $m_{ij}$ is a real-valued parameter and
$\Phi$ is the cumulative distribution function of the standard normal distribution. It is further assumed that the matrix $M = (m_{ij})_{N\times J}$ is either exactly   or approximately low-rank, where the approximate low-rankness is measured by the nuclear norm of $M$. Finally,  a random sampling scheme is typically assumed for $z_{ij}$. For example, \citet{davenport20141} considered a uniform sampling scheme where $z_{ij}$ are independent and identically distributed (i.i.d.)  Bernoulli random variables and \citet{cai2013max} considered a non-uniform sampling scheme. Under such a random sampling scheme, $Z$ and $Y$ are assumed to be independent, and thus, data missingness is ignorable \yc{in the sense that under suitable conditions, $M$ can be consistently estimated by maximizing the likelihood function  
for $M$ satisfying certain exactly or approximately low-rank constraints.}

It is of interest to draw statistical inferences on linear forms of $M$, including the inference of individual entries of $M$. This is a challenging problem under the above general setting for binary matrix completion, largely due to the presence of a non-linear link function. In particular, the existing results on the inference for matrix completion as established in  \cite{xia2021statistical} and \citet{chen2019inference}   are  under a linear model that observes $m_{ij} + \epsilon_{ij}$ for the non-missing entries, where $\epsilon_{ij}$ are mean-zero independent errors. 
Their analyses cannot be directly applied to non-linear models. 

As the first inference work of binary matrix completion with non-linear models, we start with a basic setting in which we assume the success probability takes a logistic form of $M$ and each $m_{ij}$ depends on a row effect and a column effect only.
Asymptotic normality results are then established for the inference of $M$. Specifically, this model assumes that
\begin{itemize}
    \item[(1)] given $M$, $Y_{ij}$, $i = 1, ..., N$, $j = 1, ..., J$, are independent Bernoulli random variables whose distributions do not depend on the missing indicators in $Z$, 
    \item[(2)] the success probability for 
$Y_{ij}$ is assumed to be $\exp(m_{ij})/\{1 + \exp(m_{ij})\}$ that follows a logistic link, 
\item[(3)] 
$M$ has the model parameterization that $m_{ij} = \theta_i - \beta_j$. 
\end{itemize}
\yc{This model is typically referred to as the Rasch model, one of the most popular item response theory models \citep{embretson2013item} to model item-level response data in educational testing  and psychological measurement. See Example~\ref{example:linking} below for the interpretation of $\theta_i$ and $\beta_j$ in educational testing.}
In the rest,  
$\theta_i$ and $\beta_j$ will be referred to as the row and column parameters, respectively. 
This parameterization allows the success probability of each entry to depend on both a row effect and a column effect.
We now introduce two real-world applications and discuss the interpretations of the row and column parameters in these applications. 

\begin{example}\label{example:linking}
In educational testing, each row of the data matrix represents an examinee, and each column represents an item (i.e., an exam question). Each binary entry $Y_{ij}$ records whether examinee $i$ correctly answers item $j$. The row parameter $\theta_i$ is interpreted as the ability of examinee $i$, which is an individual-specific latent factor. The column parameter $\beta_j$ is interpreted as the difficulty of item $j$. The probability of correctly answering an item increases with one's ability $\theta_i$ and decreases with the difficulty level $\beta_j$ of the item. 

In Section~\ref{sec-real-data1}, we apply the considered model to link two forms of an educational test, an important practical issue in educational assessment \citep{kolen2014test}. That is, consider two groups of examinees taking two different forms 
of an educational test, where the two forms share some common items but not all, resulting in missingness of the data matrix. As the two test forms may have different difficulty levels, it is usually not  fair to directly compare the total scores of two students who take different forms. The proposed method allows us to compare examinees' performance as if they had taken the same test form and to also quantify the estimation uncertainty. 


\end{example}

\begin{example}\label{example:voting}
Consider senators' roll call voting records in the United States Senate. In this application, each row of the data matrix corresponds to a senator, and each column corresponds to a bill voted in the Senate. Each binary response $Y_{ij}$ records 
whether the senator voted for or against the bill. It has been well recognized in
the political science literature \citep{poole1991dimensionalizing,poole1991patterns} that senate
voting behavior is essentially unidimensional, though slightly different latent variable models
are used in that literature. That is, it is believed that senators' voting behavior is driven by a unidimensional latent factor, often interpreted as the conservative-liberal political ideology. Moreover, it is a consensus that Republican senators tend to lie on the conservative side of the factor, and Democratic senators tend to lie on the liberal side. However, there are sometimes a very small number of exceptions. 
To apply our method to senators’  roll  call  voting  records, we pre-process the data as follows. If bill $j$ is more supported by
the Republican party than the Democratic party and senator $i$ voted for the bill, then we let $Y_{ij} = 1$. 
If bill $j$ is more supported by 
the Democratic party and 
senator $i$ voted against the bill, we let $Y_{ij} = 1$. Otherwise, $Y_{ij} = 0$. More details about this data pre-processing can be found in Section~\ref{sec-real-data}. 
Under the considered model, the row parameter may be interpreted as the conservativeness score of senator $i$. That is, the higher the conservativeness score of a senator, the higher chance for him/her to support a bill favored by the Republican party and to vote against a bill favored by the Democratic party. 
The column parameter characterizes the bill effect. 

In Section~\ref{sec-real-data2}, we apply the model to link the roll call voting records from multiple years, where different senators have different terms in the Senate, resulting in the missingness of the data matrix. The model allows us to compare senators in terms of their conservative-liberal political ideology, even if they have not served in the Senate at the same time.

\end{example}

As mentioned previously, the considered nonlinear factor model can be viewed as a   Bradley-Terry model \citep{bradley1952rank} for directed graphs that is commonly used for modeling pairwise comparisons. 
In Remark~\ref{rmk:BT} below, we discuss this connection and explain the reason why the existing results, such as \citet{han2020asymptotic}, do not apply to the current setting. 

\begin{remark}\label{rmk:BT}
Data $Y$ under our model setting can be viewed as a bipartite graph with $N+J$ nodes. Its adjacency matrix takes the form 
\begin{equation}\label{eq:bipartite}
\left(\begin{array}{cc}
    \text{NA}_{N,N} & Y  \\
    (1_{N,J}-Y)^T & \text{NA}_{J,J}
\end{array}\right),
\end{equation}
where $\text{NA}_{N,N}$ and $\text{NA}_{J,J}$ are two matrices whose entries are missing and 
$1_{N,J}$ is a matrix with all entries being 1. 
We let the value of $1 - Y_{ij}$ be missing if $Y_{ij}$ is missing (i.e., $z_{ij} = 0$). Such a directed graph can be modeled by the Bradley-Terry model; see \citet{bradley1952rank}. 
In \citet{han2020asymptotic}, asymptotic normality results are established for  $n$-by-$n$ adjacency matrices  that follow the Bradley-Terry model when the graph size $n$ grows to infinity. However, \citet{han2020asymptotic} only consider a uniformly missing setting. That is, the probability that the edges between two nodes are missing is assumed to be the same for all pairs of nodes. This assumption is not satisfied for the adjacency matrix \eqref{eq:bipartite}, due to the two missing matrices on the diagonal. In fact, the asymptotic analysis under the current setting is more involved due to the need to simultaneously consider two indices $N$ and $J$ and the increased complexity in approximating the asymptotic variance of model parameters.  

\end{remark}

Given data $\{Y_{ij}:z_{ij} = 1, i = 1, ..., N, j = 1, ..., J\}$, the log-likelihood function for parameters $\theta = (\theta_1, ..., \theta_N)^T$ and $\beta = (\beta_1, ..., \beta_J)^T$ takes the form 
  \begin{equation}\label{eq: likelihood-M}
  l(\theta, \beta) = \sum_{i,j:z_{ij}=1} \left[ Y_{ij}(\theta_i - \beta_j)-\log\{1+\exp(\theta_i - \beta_j)\}\right].
  \end{equation}
The identifiability of  parameters $\theta$ and $\beta$ is subject to a location shift. That is, the distribution of data remains unchanged if we add a common constant to all the $\theta_i$ and $\beta_j$, as the likelihood function in \eqref{eq: likelihood-M} only depends on all the differences $\theta_i - \beta_j$. To avoid ambiguity, we require $\sum_{i=1}^N\theta_i=0$ in the rest.
We point out that this requirement does not play a role when we draw inferences about any linear form of $M$ as 
the location shift of $\theta$ and $\beta$ does not affect the value of $M$, but it does involve when we draw inference on $\theta$ or $\beta$. 
We estimate $\theta$ and $\beta$ by the maximum likelihood estimator  
\begin{equation}\label{eq:mle}
(\hat \theta, \hat \beta) = \arg\min_{\theta, \beta}~ -l(\theta, \beta), s.t., \sum_{i=1}^N\theta_i=0. 
\end{equation}
The maximum likelihood estimator of $\theta$ and $\beta$ further leads to the maximum likelihood estimator of $M$, $\hat m_{ij} = \hat \theta_i - \hat \beta_j$. \yc{As shown in Theorem~\ref{thm-existence} below,  under mild conditions, with probability tending to 1,  optimization problem \eqref{eq:mle} has a unique solution in $\mathbb R^{N+J}$. 
We solve the optimization problem by a projected gradient descent algorithm
which is summarized in Algorithm \ref{alg} below. We define $\mbox{proj}(x)$ as a projection operator, mapping a vector in $\mathbb R^{N}$ to $\{\theta \in \mathbb R^{N}: \sum_{i=1}^N\theta_i=0\}$. This projection operator has a closed form $\mbox{proj}(x) = (x_1 - \bar x, x_2 - \bar x, ..., x_N- \bar x)$, where $\bar x = (\sum_{i=1}^N x_i)/N$. 
}

 
\begin{algorithm}[h]
\SetAlgoLined
\caption{Projected Gradient Descent Algorithm}
\KwIn{Partially observed data matrix $Y$, learning rates $\gamma_1$ and $\gamma_2$, tolerance $\epsilon$, and initial values $\theta^{(1)}=(\theta_1^{(1)}, ..., \theta_N^{(1)})^T$ and $\beta^{(1)}= (\beta_1^{(1)}, ..., \beta_J^{(1)})^T$.}


Initialize $l^{(0)}=-\infty$ and $l^{(1)}=l(\theta^{(1)},\beta^{(1)})$, and iteration number $t=1$\;

 \While{$(|l^{(t)} - l^{(t-1)}| > \epsilon)$}{
$t=t+1$\;
$\theta^{(t)} = \mbox{proj}(\theta^{(t-1)} + \gamma_1 \frac{\partial l(\theta, \beta^{(t-1)})}{\partial \theta}\vert_{\theta = \theta^{(t-1)}})$\; 
$\beta^{(t)} =\beta^{(t-1)} + \gamma_2 \frac{\partial l(\theta^{(t-1)}, \beta)}{\partial \beta}\vert_{\beta = \beta^{(t-1)}}$\;
     $l^{(t)}=l(\theta^{(t)},\beta^{(t)})$\;
}
\KwOut{$(\theta^{(I)}, \beta^{(I)})$ where $I$ is the last iteration number.}
\label{alg}
\end{algorithm}
 
\yc{The computational complexity in each iteration is $O(\sum_{i=1}^N\sum_{j=1}^J z_{ij})$. 
{It is easy to check that both the objective function and the constraint are convex. Because each $-l_{ij} (\theta_i, \beta_j)$ is convex, the objective function $-l(\theta, \beta) = \sum_{i,j:z_{ij}=1} -l_{ij}(\theta_i, \beta_j)$ with the constraint $  \sum_{i=1}^N \theta_i=0$ is also  convex   \citep{boyd2004convex}. Specifically, the Hessian matrix of the objective function is a $(N+J) \times (N+J)$ positive semidefinite matrix with the only non-zero entries 
\begin{align*}
&-\frac{\partial^2 l(\theta; \beta)}{\partial \theta_i^2}= \sum_{j: z_{ij}=1}\frac{\exp\{-(\theta_i - \beta_j)\}}{[1+\exp\{-(\theta_i - \beta_j)\}]^2}, \quad \text{ for } i = 1, \dots, N;\\
&-\frac{\partial^2 l(\theta; \beta)}{\partial \theta_i \beta_j}= -\frac{\exp\{-(\theta_i - \beta_j)\}}{[1+\exp\{-(\theta_i - \beta_j)\}]^2}, \quad \text{ for } i = 1, \dots, N;\ j \in \{l:z_{il}=1 \};\\
&-\frac{\partial^2 l(\theta; \beta)}{\partial \beta_j^2}= \sum_{i: z_{ij}=1}\frac{\exp\{-(\theta_i - \beta_j)\}}{[1+\exp\{-(\theta_i - \beta_j)\}]^2}, \quad \text{ for } j = 1, \dots, J;\\
&-\frac{\partial^2 l(\theta; \beta)}{\partial \beta_j\theta_i}= -\frac{\exp\{-(\theta_i - \beta_j)\}}{[1+\exp\{-(\theta_i - \beta_j)\}]^2}, \quad \text{ for } j = 1, \dots, J; \ i \in \{k:z_{kj}=1 \}.
\end{align*}
} With the convergence theory for the projected gradient descent algorithm established in \cite{beck2009gradient},   $(\theta^{(I)}, \beta^{(I)})$ from Algorithm~\ref{alg} is guaranteed to converge to $(\hat \theta, \hat \beta)$,  supposing that $(\hat \theta, \hat \beta)$ is the unique solution to optimization \eqref{eq:mle}. The convergence speed of this projected gradient descent algorithm is $O(1/I)$.} 

\section{Statistical Inference}\label{sec-asym-properties}

In this section, we consider the statistical inference of any linear form of $M$. Specifically, we use $g: \mathbb R^{N\times J} \mapsto \mathbb R$ to denote a linear function of $M$ that takes the form
\begin{equation}\label{eq:g}
g(M)=\sum_{i=1}^{N}\sum_{j = 1}^J w_{ij}m_{ij}, 
\end{equation}
where the weights $w_{ij}$ are pre-specified. It is straightforward that a point estimate of $g(M)$ is given by  $g(\hat M)= \sum_{i=1}^{N}\sum_{j = 1}^J w_{ij}\hat m_{ij}$. Our goal is to establish the asymptotic normality for $g(\hat M)$, based on which we can test hypotheses about $g(M)$ or construct confidence intervals. We provide two examples of $g(M)$ that may be of interest in practice.
\begin{example}
Consider $g(M)=m_{ij}$ for entry $(i,j)$ that is not observed, i.e.,  $z_{ij} = 0$. The asymptotic normality of $\hat m_{ij}$ allows us to quantify the uncertainty in our prediction $\exp(\hat m_{ij})/\{1+\exp(\hat m_{ij})\}$ of the unobserved entry, \yc{which can be done using the delta method.}   
\end{example}

\begin{example}
Consider $g(M)= \sum_{j=1}^J (m_{ij} - m_{i'j})/J = \theta_i - \theta_{i'}$, that is of interest in both educational testing and ranking. If we interpret the model as the Rasch model in educational testing, then $\theta_i$ can be regarded as examinee $i$'s ability level. Examinee $i$ is more likely to answer any question correctly than examinee $i'$ if $\theta_i > \theta_{i'}$, and vise versa.
Therefore, even when two examinees do not answer the same test form, the statistical inference of this quantity will allow us to compare their performance and further quantify the uncertainty in this comparison. 
On the other hand, if we draw connections to the Bradley-Terry model in ranking, then $\theta_i$ can be interpreted as subject $i$'s ranking criteria. The statistical inference on $(\theta_i - \theta_{i'})$ for any combination of $i, i'$
would allow us to quantify the uncertainty in the rankings of all $N$ subjects.
\end{example}

\yc{In what follows, we establish some asymptotic results under a double asymptotic regime where both $N$ and $J$ grow to infinity. Such an asymptotic regime is commonly adopted for matrix completion. As discussed in Remark~\ref{rmk:neyman} below, the estimation is inconsistent if $J$ is kept fixed and $N$ goes to infinity, which is typically known as the  Neyman-Scott phenomenon \citep{neyman1948consistent}. Remark~\ref{rmk:neyman} also discusses alternative estimators for the Rasch model.} 

\begin{remark}\label{rmk:neyman}
\yc{The Rasch model is closely related to the Neyman-Scott phenomenon discovered in \cite{neyman1948consistent}. More specifically, \cite{neyman1948consistent} give a setting under which the number of model parameters grows with the number of observations. Under this setting, they showed that the maximum likelihood estimator is statistically inconsistent when the number of observations grows to infinity. Although \cite{neyman1948consistent} considered a normal model, the same phenomenon also exists under the Rasch model. That is, as shown by \cite{andersen1973conditional}, \cite{haberman1977maximum} and \cite{ghosh1995inconsistent},  $(\hat \theta, \hat \beta)$ defined in \eqref{eq:mle} is statistically inconsistent when $J$ is fixed and there is no missing data (i.e., $z_{ij} = 1$ for all $i$ and $j$). This phenomenon naturally carries over to the matrix completion setting. 

With a fixed $J$, it is still possible to consistently estimate the column parameters $\beta_j$ in the Rasch model using a  conditional likelihood estimator \citep{andersen1970asymptotic,andersen1972numerical} or a marginal likelihood estimator \citep{lindsay1991semiparametric}. These methods treat $\theta_i$s as nuisance parameters and profile them out in the likelihood function. We believe that they can also be extended to the matrix completion setting. However, it is not straightforward to extend these estimation methods to a more general low-dimensional model for matrix completion, and their statistical efficiency and computational cost under a matrix completion setting  need further investigation.} 

 
\end{remark}

We first establish the existence and consistency for $M$, $\theta$, and $\beta$. We denote
$$J_{*}=\min\Big\{\sum_{j=1}^{J}z_{ij} : i = 1, ..., N \Big\} \mbox{ and } J^{*}=\max\Big\{\sum_{j=1}^{J}z_{ij}: i = 1, ..., N \Big\}$$ 
as the minimum and maximum numbers of observed entries per row, respectively. 
Similarly, we denote 
$$N_{*}=\min\Big\{\sum_{i=1}^{N}z_{ij}: j = 1, ..., J \Big\} \mbox{ and } N^{*}=\max\Big\{\sum_{i=1}^{N}z_{ij}: j = 1, ..., J \Big\}$$ as the minimum and maximum numbers of observed entries per column, respectively. Let $\Vert x \Vert_{\infty} = \max\{|x_i|: i = 1, ..., n\}$ be the infinity norm of a vector $x = (x_1, ..., x_n)^T$. 
Let $\theta^*$, $\beta^*$ and $M^*$ be the true values of $\theta$, $\beta$ and $M$, respectively. 
Without loss of generality, we assume $N\geq J.$  For simplicity, we also assume {$J_{*} \lesssim N_{*}$ and $J^* \lesssim N^*$.} 
We make the following assumptions.

\begin{condition}\label{cond:bound}
There exists a constant $c<\infty$ such that 
$\Vert \theta^*\Vert_{\infty} < c$ and $\Vert \beta^*\Vert_{\infty} < c$.
\end{condition}

\begin{condition} \label{cond:connect}
For any $(i, j)$, there exist $k\geq 1$ and $1\leq i_1, i_2,...,i_{k} \leq N$ and  $1\leq j_1,j_2, ...,j_k \leq J$ such that 
 $z_{ij_1}=z_{i_1j_1}=z_{i_1 j_2}=z_{i_2 j_2}=...=z_{i_kj_k}=z_{i_kj}=1.$
\end{condition} 

\yc{Condition~\ref{cond:bound} assumes that all the row and column parameters are bounded. This condition further guarantees that $|m_{ij}|\leq 2c$ for all $i$ and $j$. A similar requirement on $m_{ij}$ is needed for 1-bit matrix completion; see e.g., \cite{davenport20141}. Condition~\ref{cond:connect}   is necessary and sufficient for the identifiability of $\theta$, $\beta$ and $M$. We can view $Z$ as the adjacency matrix of a bipartite graph with $N+J$ nodes, where  there exists an edge between a row node $i$ and column node $j$ if and only if $z_{ij} = 1$.  Condition~\ref{cond:connect} is saying that this bipartite graph is a connected graph. If Condition~\ref{cond:connect} is not satisfied, then there exist $i$ and $j$ such that $m_{ij}$ is not identifiable and thus cannot be consistently estimated. We summarize this result in Proposition~\ref{prop:connect}. }

\begin{proposition}\label{prop:connect} 
If Condition~\ref{cond:connect} holds and given $m_{ij}$ for all $i$ and $j$ such that  $z_{ij} = 1$, 
then   $\theta$ and $\beta$ are uniquely determined by equations $\sum_{i=1}^N\theta_i=0$ and $\theta_i - \beta_j = m_{ij}$, $i = 1, ..., N, j = 1, ..., J$, for which $z_{ij} = 1$. That is, $\theta$ and $\beta$ can be uniquely determined by $m_{ij}$ values of the observed entries.  

On the other hand, if Condition~\ref{cond:connect} does not hold and given $m_{ij}$ for all $i$ and $j$ such that  $z_{ij} = 1$, then there exists $(\tilde \theta, \tilde \beta) \neq (\theta, \beta)$, such that  $\sum_{i=1}^N \tilde\theta_i=0$, $\sum_{i=1}^N \theta_i=0$, and $\theta_i - \beta_j = \tilde \theta_i - \tilde \beta_j = m_{ij}$, $i = 1, ..., N, j = 1, ..., J, z_{ij} = 1$. \yc{In that case, there exist $i$ and $j$ such that $z_{ij}=0$ and 
{$$ \theta_i - \beta_j \neq \tilde \theta_i - \tilde \beta_j,$$}
so that the corresponding $m_{ij}$ is not identifiable. 
}

\end{proposition}

\yc{We give an example where Condition~\ref{cond:connect} is not satisfied.} 

\begin{example}
\yc{Suppose that both $N$ and $J$ are even numbers. We let $z_{ij} = 0$ if $i \in \{N/2 + 1, ..., N\}$ or $j\in \{J/2 + 1, ..., J\}$, and 
$z_{ij} = 1$ otherwise. This indicator matrix is shown in Figure~\ref{fig:heatmap}. For any $(i,j)$ satisfying $z_{ij} = 0$, 
there is no $k\geq 1$ and $1\leq i_1, i_2,...,i_k \leq N$ and  $1\leq j_1,j_2, ...,j_k \leq J$ such that $z_{ij_1}=z_{i_1j_1}=z_{i_1 j_2}=z_{i_2 j_2}=...=z_{i_kj_k}=z_{i_kj}=1$.}
\end{example}

\begin{figure}
    \centering
    \includegraphics[scale=0.5]{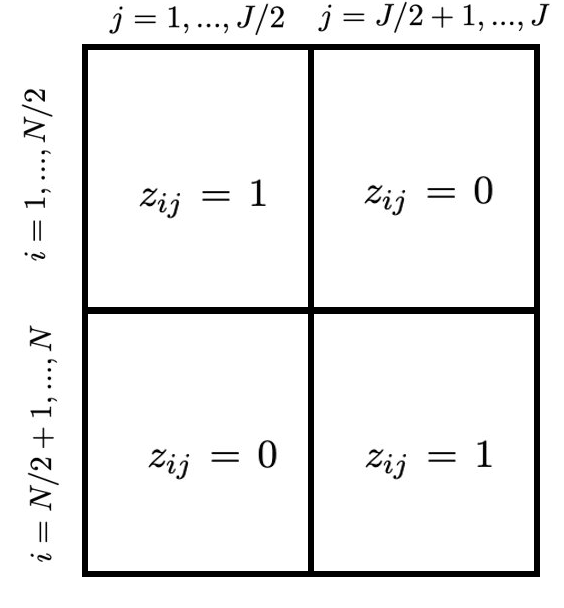}
    \caption{An indicator matrix for which Condition 2 is not satisfied.}
    \label{fig:heatmap}
\end{figure}

\yc{We remark that when Condition~\ref{cond:connect} is not satisfied, it is still possible to draw inference on $\theta_i$, $\beta_{j}$, and $m_{ij}$, for $i \in \mathcal R \subset \{1, ..., N\}$ and $j \in \mathcal C \subset \{1, ..., J\}$, when the bipartite graph corresponding to the submatrix $(z_{ij})_{i \in \mathcal R, j\in \mathcal C}$ is connected. In that case, we can apply Theorems~\ref{thm-existence} through \ref{thm-3-sufficient-condition} below to a subset of data with $i \in \mathcal R$ and  $j\in \mathcal C$. We further remark that Condition~\ref{cond:connect} is likely satisfied under mild conditions when the missing indicator matrix $Z$ is generated by a uniform random sampling scheme. Theorem~\ref{thm:connect} below provides a sufficient condition under which Condition 2 holds.}  

\begin{theorem} \label{thm:connect}

\yc{Suppose that $z_{ij}$ are i.i.d. Bernoulli random variables, satisfying $P(z_{ij} = 1) = p$. Let both $J$ and $p$ be functions of  $N$ satisfying $$N p \geq Jp \geq (\log(N))^4.$$
Then with probability tending to 1, Condition~\ref{cond:connect} holds if there exists an integer $n \geq 1$ such that  
$$p^n J^{(n-1)/2} N^{(n-1)/2} - \log(NJ) \rightarrow \infty$$
if $n$ is odd, and 
$$p^n J^{n/2} N^{(n/2)-1} - 2\log(N) \rightarrow \infty$$
if $n$ is even. 
} 
\end{theorem}
\yc{ Theorem~\ref{thm:connect} is implied by Theorem B  \cite{bollobas1984diameters} of which concerns the diameter of a random bipartite graph and the fact that a graph is connected if and only if its diameter is finite. For example, consider the setting $N=J$ and let $n=2$. Then Theorem~\ref{thm:connect} suggests that Condition~\ref{cond:connect} holds with high probability, if 
$p^2 N -2\log(N) \rightarrow \infty.$

  We next establish the estimation consistency. The following condition is needed. }

\begin{condition}\label{cond:speed}
As $N$ and $J$ grow to infinity, the following are satisfied:
\begin{itemize}
    \item[$(a)$] $J_*^{-1} \log N \to 0.$
    \item[$(b)$] {
    $ N_* J_*N^{-1}\to\infty$ and $ J_*^2J^{-1}\to\infty$.
    }
    \item[$(c)$] {$N_* \asymp N^*$.
    }
\end{itemize}

\end{condition}

{Condition~\ref{cond:speed}(a) is a mild technical condition requires that $J_*$ grows faster than $\log N$. Condition~\ref{cond:speed}(b) imposes constraints on the number of observations for parameters to grow at suitable rates. In particular, note that in the case of $N_*\asymp N^*$ and $J_*\asymp J^*$, the observed entries of the matrix can be of the order $O(N_*J_*)$=$O(N^*J^*)$; then the condition of $ N_* J_*N^{-1}\to\infty$  gives a natural requirement for the consistency theory that the number of observed entries needs to have a higher order than the number of unknown parameters, which is of the order $O(N)$.
Condition~\ref{cond:speed}(c) requires that $N_*$ and $N^*$ are of the same order for convenience of the proof. This assumption essentially requires a balanced missing data pattern that has a similar spirit as the random sampling regimes for missingness adopted in \citet{cai2013max} and \citet{davenport20141}.}

{Similar to Condition 2, the rate requirement of Condition 3 can also be shown to be held with high probability for random design under related requirements, when the missing indicator matrix
$Z$ is generated by a uniform random sampling scheme. To illustrate this,  let $z_{ij}$ be i.i.d. Bernoulli random variables with $P(z_{ij} = 1) = p$. Then   for any $j$, by Hoeffding's inequality, we have
 $P(|\sum_{i=1}^{N}z_{ij} -Np|> x_{N,J}) \leq 2J^{-(1+\epsilon)}$ where $x_{N,J}=  [N(1+\epsilon)\log(J)/2]^{1/2}$ and $\epsilon>0$ is a small constant.
 By union bound, we then have  $N_{*}\asymp N^{*}\asymp Np$ with high probability, if $N^{-1/2} (\log(J))^{1/2}  \lesssim p$. Similarly we have  $J_*\asymp J^*\asymp Jp$ with high probability if $J^{-1/2}(\log(N))^{1/2}  \lesssim p$. When $N\geq J$, it is easy to check that Condition 3 is satisfied with high probability if  $Jp \gg \log N$ and $J^{-1/2}(\log(N))^{1/2}  \lesssim p$ under this random design setting. 
 }

\yc{\begin{theorem}\label{thm-existence}
Assume that Conditions  \ref{cond:bound}, \ref{cond:connect} and \ref{cond:speed} hold. Then, as $N, J$ grow to infinity, maximum likelihood estimator $(\hat{\theta},\hat{\beta})$ exists  in $\mathbb R^{N+J}$ and is unique, with probability tending to 1. 
Furthermore,  
we have 
$$\Vert \hat \theta - \theta^*\Vert_{\infty} =  o_p(1),  \quad  \Vert \hat \beta - \beta^*\Vert_{\infty} =  o_p(1),$$ 
and 
$$\max_{i,j} \vert \hat m_{ij} - m_{ij}^*\vert =  o_p(1).$$ 
\end{theorem}
}

\yc{We note that the maximum likelihood estimator does not exist if there exists a row $i$ such that $Y_{ij'}$s take the same value for all $j'$ such that $z_{ij'} = 1$, or if there exists a column $j$ such that $Y_{i'j}$s take the same value for all $i'$ such that $z_{i'j} = 1$. In these cases, the corresponding $\theta_i$ and $\beta_j$ will converge to $\infty$ or $-\infty$. Theorem~\ref{thm-existence} suggests that these cases are unlikely to occur when both $N$ and $J$ are large. In practice, to avoid non-convergence, we can add the constraints that $\vert \theta_i\vert \leq C$ and $\vert \beta_j\vert \leq C$ for all $i$ and $j$ and a sufficiently large constant $C$.}

\yc{Note that Theorem~\ref{thm-existence} does not give the convergence rate.  We now give the optimal convergence rate under stronger conditions in addition to Condition~\ref{cond:speed}. }

\begin{condition}\label{cond:speed2}
As $N$ and $J$ grow to infinity, the following are satisfied:
\begin{itemize}
    \item[$(a)$] {$J_{*}^{-2}N_{*}(\log N)^2\to 0$.}
\item[$(b)$] {$N_*^{-1/2} \log J \rightarrow 0 $}

\item[$(c)$] {$J_* \asymp J^*$.} 
\end{itemize}

\end{condition}
{Condition \ref{cond:speed2}(a) is a stronger version of Condition \ref{cond:speed}(a) that requires $J_*$ grows faster than $N_*^{1/2}\log N$. Condition \ref{cond:speed2}(b) imposes additional constraints on the grow rate of $N_*$. Condition \ref{cond:speed2}(c) requires that $J_*$ and $J^*$ are of the same order. This set of conditions, together with Condition \ref{cond:speed} will guarantee the optimal convergence rates and asymptotic normality.
Similar to Conditions 2 and 3,  Condition 4 can also be shown to be held with high probability for random design, when the missing indicator matrix
$Z$ is generated by i.i.d.  Bernoulli random variables with the parameter  $p$ satisfies certain requirement. In particular, following the discussion for Condition 3, we can see that Condition 4 is satisfied when 
$J^2 p\gg N(\log N)^2$ and $J^{-1/2}(\log(N))^{1/2}  \lesssim p$.}


\begin{theorem}\label{thm-rate}
Assume that {Conditions \ref{cond:bound}--\ref{cond:speed2}} hold. Then, as $N, J$ grow to infinity, maximum likelihood estimator $(\hat{\theta},\hat{\beta})$ exists, with probability tending to 1. 
Furthermore, as $N$ and $J$ grow to infinity, 
we have 
$$\Vert \hat \theta - \theta^*\Vert_{\infty} =  O_p\big\{(\log N)^{\frac{1}{2}}J_{*}^{-\frac{1}{2}} \big\},  \quad  \Vert \hat \beta - \beta^*\Vert_{\infty} =  O_p\big\{(\log J)^{\frac{1}{2}}N_{*}^{-\frac{1}{2}} \big\},$$ 
and 
$$\max_{i,j} \vert \hat m_{ij} - m_{ij}^*\vert =  O_p\big\{(\log J)^{\frac{1}{2}}N_{*}^{-\frac{1}{2}} + (\log N)^{\frac{1}{2}}J_{*}^{-\frac{1}{2}} \big\}.$$ 
\end{theorem}

\begin{remark}
{Theorem~\ref{thm-rate} above gives the optimal convergence rates for 
$\Vert \hat \theta - \theta^*\Vert_{\infty}$, $\Vert \hat \beta - \beta^*\Vert_{\infty}$, and $\max_{i,j} \vert \hat m_{ij} - m_{ij}^*\vert$.
To illustrate this, consider an oracle setting that $\beta$ take true values; then the convergence rates for maximum likelihood estimators $\hat{\theta}_i$ are $\hat{\theta}_i - \theta_i^*=O_p(J_{*}^{-{1}/{2}})$ and they independently follow   asymptotic normal distributions. From the result that the maximum of $N$ i.i.d. standard normal random variables has the order of $(\log N)^{1/2}$ \citep{van2014probability}, we can see 
the optimal convergence rate of the max-norm of $\hat{\theta}$ is $\|\hat{\theta} - \theta^* \|_{\infty} = O_p\{(\log N)^{{1}/{2}}J_{*}^{-{1}/{2}} \}$.
Similar arguments can be applied to show the optimality of the convergence rate of $\Vert \hat \beta - \beta^*\Vert_{\infty} = O_p\{(\log J)^{{1}/{2}}N_{*}^{-{1}/{2}}\}$. As $\hat{m}_{ij} = \hat{\theta}
_i - \hat{\beta}_j$, the convergence rate of $\vert \hat m_{ij} - m_{ij}^*\vert = O_p \{(\log J)^{{1}/{2}}N_{*}^{-{1}/{2}} + (\log N)^{{1}/{2}}J_{*}^{-{1}/{2}} \}$ is optimal.   }
\end{remark}

To state the asymptotic normality result for $g(\hat M)$, we reexpress $$g(M) = w_g^T \theta +\tilde w_g^T \beta,$$ 
where $w_g=(w_{g1}, \cdots, w_{gN})^T$ and $\tilde w_g=(\Tilde w_{g1}, \cdots, \Tilde w_{gJ})^T$.  Note that this expression always exists 
by letting 
$w_{gi}=\sum_{j=1}^Jw_{ij}$ and $\Tilde w_{gj}=-\sum_{i=1}^Nw_{ij}$. \yc{Recall that $w_{ij}$s are weights defined in \eqref{eq:g}.} 
We introduce some notation. Let $\sigma_{ij}^2 =$ var$(Y_{ij}) = \exp(\theta_i^* -\beta_j^*)/\{1+\exp(\theta_i^* -\beta_j^*)\}^2$, 
 $\sigma_{i+}^2= \sum_{j = 1}^J z_{ij}\sigma_{ij}^2$, 
and $\sigma_{+j}^2= \sum_{i = 1}^N z_{ij}\sigma_{ij}^2.$ 
 Further denote $\hat \sigma_{ij}^2 = \exp(\hat \theta_i -\hat \beta_j)/\{1+\exp(\hat \theta_i -\hat \beta_j)\}^2$, 
 $\hat \sigma_{i+}^2= \sum_{j = 1}^J z_{ij}\hat \sigma_{ij}^2$, 
and $\hat \sigma_{+j}^2= \sum_{i = 1}^N z_{ij}\hat \sigma_{ij}^2$  to
 be the corresponding plug-in estimates.  We use $\Vert \cdot\Vert_1$ to denote the $L_1$ norm of a vector.
 The result is summarized in Theorem~\ref{thm-3-sufficient-condition} below.

\begin{theorem}\label{thm-3-sufficient-condition}
Assume {Conditions \ref{cond:bound}--\ref{cond:speed2}} hold. 
Consider a linear function $g(M) = w_g^T \theta +\tilde w_g^T \beta$ with $g(M) \neq 0$. 
 Further suppose that there exists a constant $C > 0$ such that $\| w_g\|_1 < C$ and $\| \tilde w_g\|_1 < C$. Then 
$$\tilde \sigma(g)^{-1}\big\{g(\hat{M})-g({M^*})\big\} \to N(0,1)~ \text{in distribution},$$
where 
$\tilde{\sigma}^2(g)=\sum_{i=1}^{N}w_{gi}^2(\sigma_{i+}^2)^{-1}+\sum_{j=1}^{J}\Tilde w_{gj}^2(\sigma_{+j}^2)^{-1}.$

Moreover, $\tilde{\sigma}(g)$ can be replaced by its plug-in estimator, i.e., 
\begin{equation}\label{eq:normality}
\hat \sigma(g)^{-1}\big\{g(\hat{M})-g({M^*})\big\} \to N(0,1)  \text{ in distribution},
\end{equation}
where 
$\hat{\sigma}^2(g)=\sum_{i=1}^{N}w_{gi}^2(\hat\sigma_{i+}^2)^{-1}+\sum_{j=1}^{J}\Tilde w_{gj}^2(\hat\sigma_{+j}^2)^{-1}.$

\end{theorem}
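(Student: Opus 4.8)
Write $\eta=(\theta^\top,\beta^\top)^\top$ and $c_g=(w_g^\top,\tilde w_g^\top)^\top$, so that $g(M)=c_g^\top\eta$, and let $\eta^*=(\theta^{*\top},\beta^{*\top})^\top$. Since $g$ is invariant to the location shift (one checks $w_g^\top\mathbf 1_N+\tilde w_g^\top\mathbf 1_J=\sum_{ij}w_{ij}-\sum_{ij}w_{ij}=0$), all computations are done on the constrained parameter space $\{\sum_i\theta_i=0\}$, on which the Fisher information is invertible by Proposition~\ref{prop:connect}. Let $S(\eta)=\nabla l(\eta)$ be the score, with $S_{\theta_i}(\eta^*)=\sum_{j:z_{ij}=1}(Y_{ij}-p^*_{ij})$ and $S_{\beta_j}(\eta^*)=-\sum_{i:z_{ij}=1}(Y_{ij}-p^*_{ij})$ where $p^*_{ij}=e^{m^*_{ij}}/(1+e^{m^*_{ij}})$, and let $\mathcal I=-\nabla^2 l(\eta^*)$, which has the $2\times 2$ block form with diagonal blocks $D_\theta=\operatorname{diag}(\sigma_{i+}^2)$, $D_\beta=\operatorname{diag}(\sigma_{+j}^2)$ and off-diagonal block $-V$, $V=(z_{ij}\sigma_{ij}^2)_{N\times J}$.

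\textbf{Step 1 (linearization).} Using $S(\hat\eta)=0$ and the integral form of the mean value theorem, $S(\eta^*)=\bar{\mathcal H}(\hat\eta-\eta^*)$ with $\bar{\mathcal H}=\int_0^1(-\nabla^2 l)(\eta^*+t(\hat\eta-\eta^*))\,dt$, hence $g(\hat M)-g(M^*)=c_g^\top\bar{\mathcal H}^{-1}S(\eta^*)$. By Theorem~\ref{thm-existence}, $\|\hat\theta-\theta^*\|_\infty=O_p((\log N/J_*)^{1/2})$ and $\|\hat\beta-\beta^*\|_\infty=O_p((\log J/N_*)^{1/2})$, and the entries of the Hessian are smooth functions of the $m_{ij}$ that are Lipschitz on compacts (Condition~\ref{cond:bound}); so I replace $\bar{\mathcal H}$ by $\mathcal I$. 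The resulting error is controlled by a remainder quadratic in $\hat\eta-\eta^*$, and the added assumption $J_*^{-2}N_*(\log N)^2\to 0$ (with Conditions~\ref{cond:speed}, \ref{cond:bound}) is what makes $c_g^\top(\bar{\mathcal H}^{-1}-\mathcal I^{-1})S(\eta^*)=o_p(\tilde\sigma(g))$. This reduces everything to the limit of $c_g^\top\mathcal I^{-1}S(\eta^*)$.

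\textbf{Step 2 (decoupling the two blocks --- the crux).} Write $\mathcal I=\mathcal D-\mathcal E$ with $\mathcal D=\operatorname{diag}(D_\theta,D_\beta)$ and $\mathcal E$ the off-diagonal part, and expand on the relevant subspace
\[
\mathcal I^{-1}=\mathcal D^{-1}+\mathcal D^{-1}\mathcal E\mathcal D^{-1}+\mathcal D^{-1}\mathcal E\mathcal D^{-1}\mathcal E\mathcal D^{-1}+\cdots ;
\]
the series converges there because $c_g$ is orthogonal to the single ``constant'' direction $(\mathbf 1_N^\top,\mathbf 1_J^\top)^\top$ that spans $\ker\mathcal I$. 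Under Condition~\ref{cond:bound} the $\sigma_{ij}^2$ are bounded above and away from $0$, so by Condition~\ref{cond:speed}(a),(c), $\sigma_{i+}^2\asymp J_*$ and $\sigma_{+j}^2\asymp N_*$; tracking the entries of $\mathcal D^{-1}\mathcal E$ (each roughly $(J_*N_*)^{-1/2}$ in the cross direction) shows each additional factor contracts by a negative power of $J_*$ or $N_*$, so the tail is negligible relative to $\tilde\sigma(g)$. Hence $c_g^\top\mathcal I^{-1}S(\eta^*)=L+o_p(\tilde\sigma(g))$ with
\[
L=\sum_{i,j:\,z_{ij}=1}\Big(\frac{w_{gi}}{\sigma_{i+}^2}-\frac{\tilde w_{gj}}{\sigma_{+j}^2}\Big)(Y_{ij}-p^*_{ij}),
\]
and, using $\sum_{j:z_{ij}=1}\sigma_{ij}^2=\sigma_{i+}^2$ and $\sum_{i:z_{ij}=1}\sigma_{ij}^2=\sigma_{+j}^2$, the cross term in the variance is lower order and $\operatorname{Var}(L)=\tilde\sigma^2(g)\{1+o(1)\}$.

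\textbf{Step 3 (CLT and plug-in).} $L$ is a sum of independent, mean-zero, bounded summands with diverging row/column degrees (Condition~\ref{cond:speed}(a),(b)); since $\max_{i,j}|w_{gi}/\sigma_{i+}^2-\tilde w_{gj}/\sigma_{+j}^2|=O(J_*^{-1})$ while $\operatorname{Var}(L)\gtrsim J_*^{-1}$, the Lyapunov ratio is $O(J_*^{-1/2})\to 0$, giving $\tilde\sigma(g)^{-1}L\to N(0,1)$ and thus $\tilde\sigma(g)^{-1}\{g(\hat M)-g(M^*)\}\to N(0,1)$. Finally, Theorem~\ref{thm-existence} and Condition~\ref{cond:speed}(b) give $\max_{i,j}|\hat m_{ij}-m^*_{ij}|=O_p((\log N/J_*)^{1/2})$, so $\hat\sigma_{i+}^2/\sigma_{i+}^2\to_p 1$ and $\hat\sigma_{+j}^2/\sigma_{+j}^2\to_p 1$ uniformly, whence $\hat\sigma^2(g)/\tilde\sigma^2(g)\to_p 1$ and \eqref{eq:normality} follows by Slutsky's theorem.

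\textbf{Main obstacle.} The hard part is Step 2: because the missingness is only assumed to satisfy the balance/growth Conditions~\ref{cond:speed} and~\ref{cond:connect} rather than a random sampling scheme, the off-diagonal block of the Fisher information must be controlled entrywise through all orders of the Neumann expansion, and the control must beat $\tilde\sigma(g)$ (which need not be bounded below); simultaneously the quadratic Hessian remainder from Step 1 must be shown negligible against the same denominator, and reconciling both is precisely where the extra rate condition $J_*^{-2}N_*(\log N)^2\to 0$ enters.
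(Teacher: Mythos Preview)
Your overall strategy---linearize the score equation, approximate $c_g^\top\mathcal I^{-1}S(\eta^*)$ by $L=c_g^\top\mathcal D^{-1}S(\eta^*)$, then apply a CLT to $L$---matches the paper's high-level structure (the paper works in Haberman's $\Omega_N$ framework and uses moment generating functions rather than Lyapunov, but the decomposition into ``leading linear term $+$ remainder'' is the same). The plug-in half of Step~3 is essentially identical to the paper's argument.

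The genuine gap is Step~2. Your Neumann-series justification does not go through: the entries of $\mathcal D^{-1}\mathcal E$ are $O(J_*^{-1})$ and $O(N_*^{-1})$ in the two off-diagonal blocks, not $(J_*N_*)^{-1/2}$, and the row sums of $\mathcal D^{-1}\mathcal E$ equal $1$, so successive powers do \emph{not} decay entrywise. The series converges on the complement of $(\mathbf 1_N^\top,\mathbf 1_J^\top)^\top$ only at a rate governed by the spectral gap of $\mathcal D^{-1/2}\mathcal E\mathcal D^{-1/2}$, and Conditions~\ref{cond:speed}--\ref{cond:connect} give no quantitative gap (a block design that is barely connected is allowed). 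What you actually need is the single identity $c_g^\top\mathcal I^{-1}c_g=\tilde\sigma^2(g)+O((N_*J_*)^{-1})$; once you have it, a direct variance computation gives
\[
\operatorname{Var}\!\big(c_g^\top(\mathcal I^{-1}-\mathcal D^{-1})S(\eta^*)\big)=c_g^\top\mathcal I^{-1}c_g-2c_g^\top\mathcal D^{-1}c_g+c_g^\top\mathcal D^{-1}\mathcal I\mathcal D^{-1}c_g=O((N_*J_*)^{-1}),
\]
which is exactly the decoupling you want. The paper obtains that variance identity not via a Neumann series but by a one-step variational bound: it writes the Riesz representer of $g$ as $d_{ij}(g)=b(g)+f_i(g)+m_j(g)$, uses the explicit first-order approximation $d'_{ij}(g)=(\sigma_{i+}^2)^{-1}w_{i+}+(\sigma_{+j}^2)^{-1}w_{+j}-(\sigma_{++}^2)^{-1}w_{++}$, and bounds $\|d(g)-d'(g)\|_\sigma$ through a coercivity constant $\gamma_N$ coming from the uniform bounds on $\sigma_{ij}^2$ (Lemma~\ref{lemma-d''n}); no spectral gap is needed.

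Step~1 is also too quick. ``Replace $\bar{\mathcal H}$ by $\mathcal I$'' hides the same difficulty: controlling $c_g^\top(\bar{\mathcal H}^{-1}-\mathcal I^{-1})S(\eta^*)$ via a naive operator-norm bound on $\bar{\mathcal H}^{-1}-\mathcal I^{-1}$ again runs into the missing spectral gap. The paper handles this remainder through the Kantorovich fixed-point machinery (Lemma~\ref{lemma-bound-diff-btw-estimate-truth}), which yields $\|\hat M_N-M_N^*-R_N\|_\sigma(A)\le d_N\|R_N\|_\sigma^2(A)$ in a custom norm $\|\cdot\|_\sigma(A)$ with $A=A_{\theta,\beta}$; the extra rate condition $J_*^{-2}N_*(\log N)^2\to0$ enters precisely here to force $d_Nf_N^2\to0$ (Lemma~\ref{lemma-sequence-and-bound-A-theta-beta}).
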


We now discuss the implications of Theorem~\ref{thm-3-sufficient-condition}. For each $\theta_i$, 
var$(\hat{\theta}_i)=(\sigma_{i+}^2)^{-1}\big\{1+ o(1)\big\}$. 
It is worth noting that by the classical theory of maximum likelihood estimation, 
$(\sigma_{i+}^2)^{-1}$ is the Cramer-Rao lower bound for the estimation of $\theta_i$ when the column parameters $\beta$ are known. Thus, the result of Theorem~\ref{thm-3-sufficient-condition} implies that $\hat \theta_i$ is an asymptotically optimal estimator for $\theta_i$. Similarly, for each $\beta_j$, 
var$(\hat{\beta}_j)=(\sigma_{+j}^2)^{-1}\big\{1+ o(1)\big\}$, which also achieves
the Cramer-Rao lower bound asymptotically, when the row parameters $\theta$ are known. Moreover, 
var$(\hat{m}_{ij}) =$ var$(\hat \theta_i - \hat \beta_j) =\big\{(\sigma_{i+}^2)^{-1}+(\sigma_{+j}^2)^{-1}\big\}\big\{1+ o(1)\big\}$.  
We end this section with a remark.  

\begin{remark}
The derived asymptotic theory is  different from that for non-linear regression models of increasing dimensions that has been studied in \citet{portnoy1988asymptotic}, \citet{he2000parameters} and \citet{wang2011gee}. 
To achieve asymptotic normality under the setting of these works, one requires the number of observations to grow faster than the square of the number of parameters. 
\yc{Under the setting of the current work, the model has $N+J-1$ free parameters, while the number of observed entries is allowed to grow much slower than $NJ \leq  (N+J-1)^2$.}
\end{remark}

\section{Simulation Study}\label{sec-simulation}


We study the finite-sample performance of the likelihood-based estimator. We consider two settings: (1)
$N=5000$ and $J=200$, and (2) $N=10000$ and $J=400$. Missing data are generated under a block-wise design. That is, we  split the rows into five equal-sized clusters and the columns into four  equal-sized clusters. 
We let each row cluster correspond to the columns from a distinct combination of two  column clusters. Rows from the same cluster have the same missing pattern. Specifically, their entries are observable and only observable on the columns that this row cluster corresponds to. 
This  missing data pattern can be illustrated by a five-by-four block-wise matrix $\{(1,0,0,1,0)^T, (1,1,0,0,1)^T, (0,1,1,1,0)^T, (0,0,1,0,1)^T\}$, where 1 and 0 represent a submatrix with $z_{ij} = 1$ and 0, respectively. An illustration of the missing pattern $Z$ is illustrated in Figure \ref{fig:simulation_data1}.
Under the first setting,  $N_{*}=2000, N^{*}=3000$, and $J_{*}=J^{*}=100$. Under the second setting, $N_{*}=4000, N^{*}=6000$, and $J_{*}=J^{*}=200$. For each setting, $\theta$ is simulated from a uniform distribution over the space
$\{x = (x_1, ..., x_N)^T: \sum_{i=1}^N x_i = 0,  -2 \leq x_i \leq 2 \}$, and $\beta$ is obtained by simulating $\beta_j$ independently from the uniform distribution over the interval $[-2,2]$. For each setting, 2000 independent datasets are generated from the considered model. 
\begin{figure}[ht]
  \centering
  \includegraphics[scale=1.6]{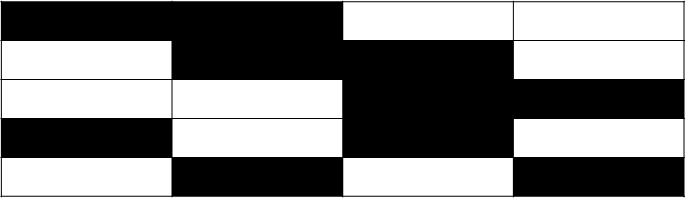}
  \caption{A heat map of $Z$. The black and white regions correspond to $z_{ij}=1$ and 0, respectively.}
  \label{fig:simulation_data1}
\end{figure} 

Under setting (1), the mean squared estimation errors for $M$, $\theta$, and $\beta$ are 0.067, 0.064, and 0.0028, respectively, across all relevant entries and all 2000 independent samples. Under setting (2), these values read 0.033,  0.031 and 0.0013, respectively. Unsurprisingly, increasing sample sizes can improve estimation accuracy.

We then examine the variance approximation in Theorem~\ref{thm-3-sufficient-condition}. We compare $\hat{\sigma}^2(g)$, $\tilde{\sigma}^2(g)$ and $s^2(g)$, where $s^2(g)$ denotes the sample variance of $g(\hat M)$ that is calculated based on the 2000 simulations.
As $\hat{\sigma}^2(g)$ varies across the datasets, we calculate 
$\bar{\sigma}^2(g)$ as the average of $\hat{\sigma}^2(g)$ over 2000 simulated datasets.  We consider  functions $g(M) = m_{ij}, \theta_i, \beta_j$, $i=1, ..., N, j = 1, ..., J$. The results are given in Figure~\ref{fig:var-pairs}, where
panels (a)-(c) show the scatter plots of ${s}^2(g)$ against $\bar{\sigma}^2(g)$ and panels (d)-(f) show those of $s^2(g)$ against $\tilde{\sigma}^2(g)$. These plots suggest that $\bar{\sigma}^2(g)$, $\tilde{\sigma}^2(g)$, and $s^2(g)$ are close to each other, for the specific forms of $g$ that are examined.


\begin{figure}[!ht]
  \centering
  \begin{subfigure}{\linewidth}
    \centering
    \includegraphics[scale=0.5]{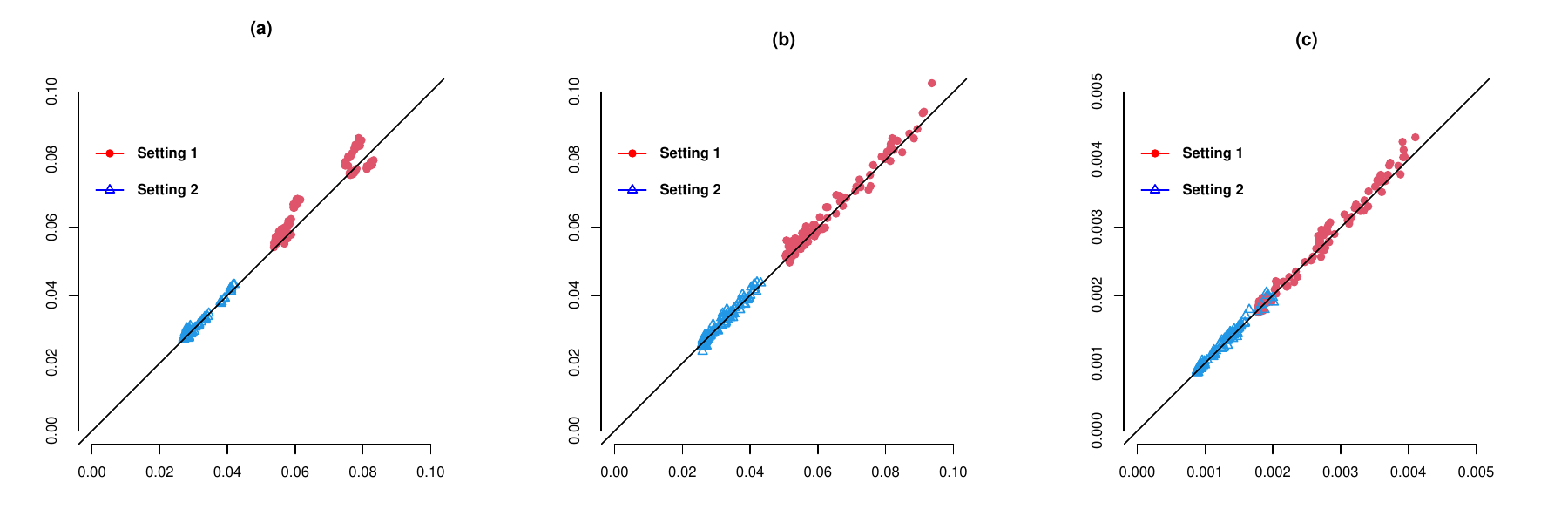}
  \end{subfigure}
  \begin{subfigure}{\linewidth}
    \centering
    \includegraphics[scale=0.5]{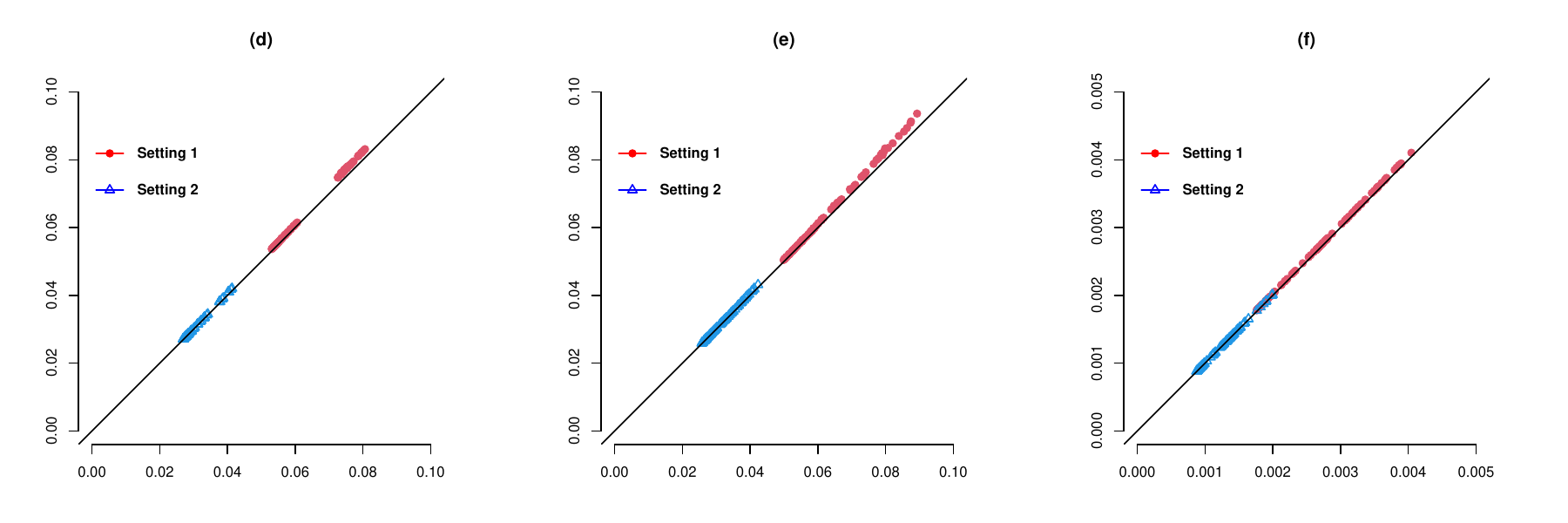}
  \end{subfigure}
  \caption{Panels (a)-(c) plot $s^2(g)$ against $\bar{\sigma}^2(g)$ for $g(M) = m_{ij}$, $\theta_i$, and $\beta_j$, respectively, for {fixed block-wise setting.}
  Panels (d)-(f) plot $s^2(g)$ against $\tilde{\sigma}^2(g)$ for $g(M) = m_{ij}$, $\theta_i$ and $\beta_j$, respectively, for {fixed block-wise setting.}
  Each panel shows 100 randomly sampled 
  $m_{ij}$, $\theta_i$, or $\beta_j$ under each setting.  The line $y = x$ is given as a reference. }  
  \label{fig:var-pairs}
\end{figure}  

To validate asymptotic normality, we compare the empirical densities of the 2000 sample estimates of $m_{11}$, $\theta_1$ and $\beta_1$ against their respective theoretical normal density curves in Figure \ref{fig:density-fixed} for illustration. We can observe from Figure \ref{fig:density-fixed} that the empirical distributions of the estimates agree well with their corresponding theoretical distributions. 
\begin{figure}[ht]
  \centering
  \begin{subfigure}{\linewidth}
    \centering
    \includegraphics[scale=0.5]{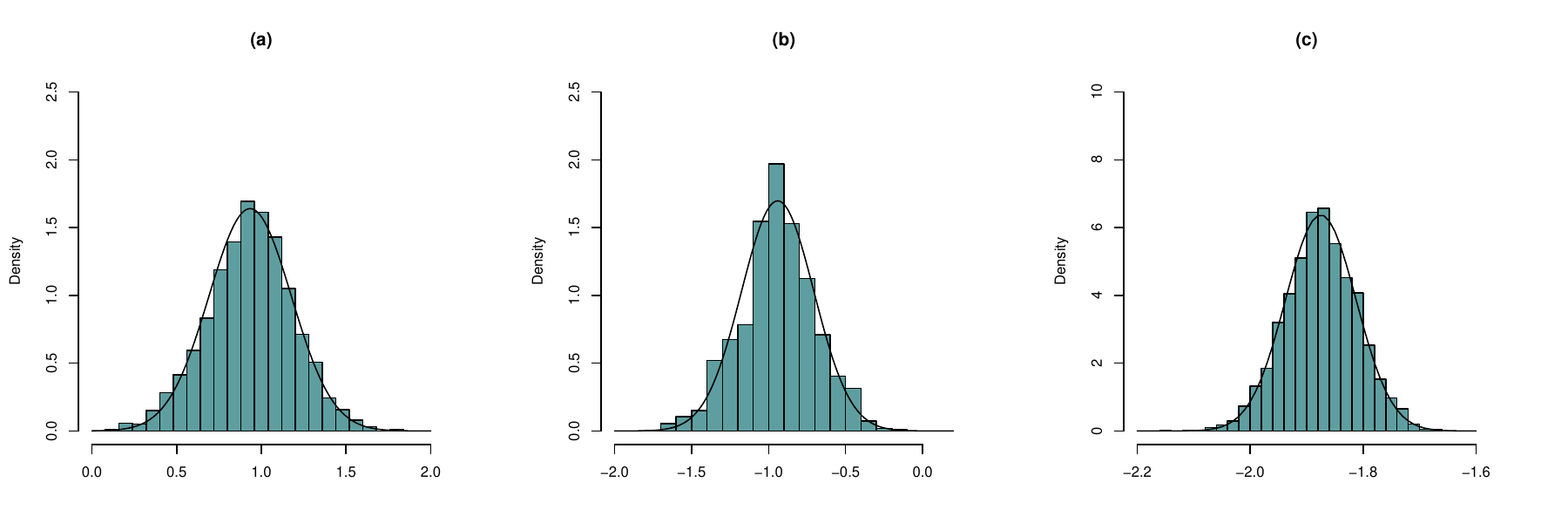}
  \end{subfigure}
  
  \begin{subfigure}{\linewidth}
    \centering
    \includegraphics[scale=0.5]{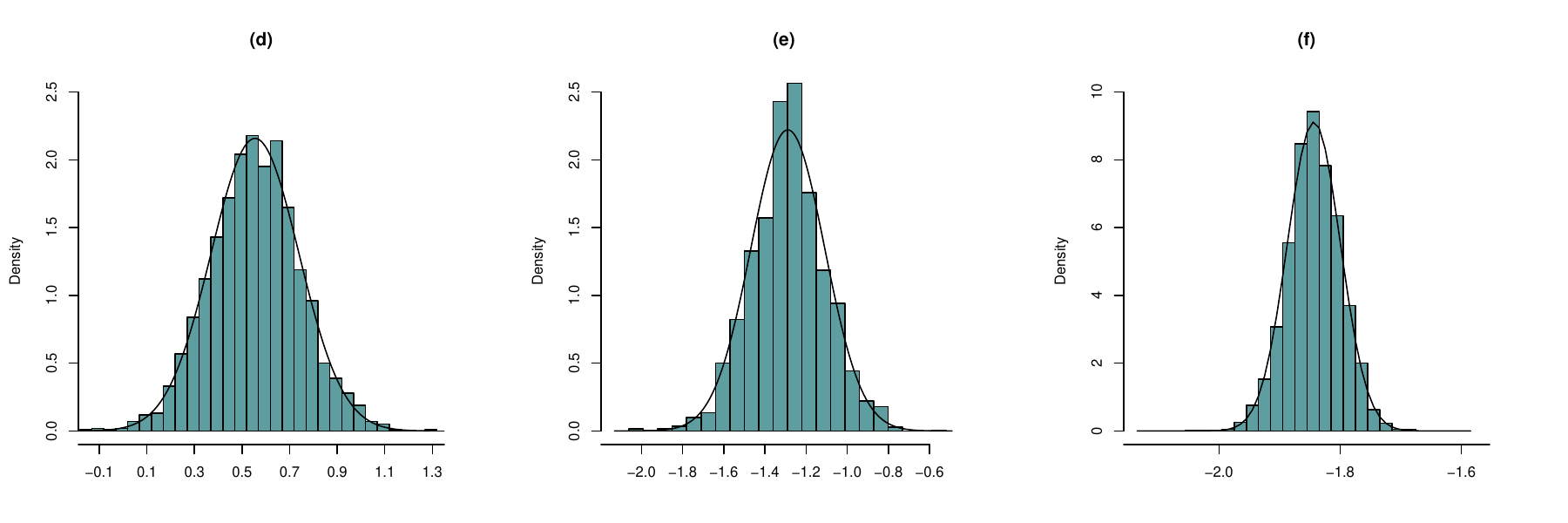}
  \end{subfigure}
\caption{Panels (a)-(c) presents the empirical densities (histograms) of $\hat{m}_{11}$, $\hat{\theta}_{1}$ and $\hat{\beta}_1$ under setting (1), respectively, out of 2000 simulations for {fixed block-wise setting.} 
 Panels (e)-(g) presents the empirical densities of $\hat{m}_{11}$, $\hat{\theta}_{1}$ and $\hat{\beta}_1$ under setting (2), respectively, out of 2000 simulations, for {fixed block-wise setting.} 
 The curves are theoretical density curves of N$(m_{11}, \Tilde \sigma^2(m_{11}))$, N$(\theta_{1}, \Tilde \sigma^2(\theta_{1}))$ and N$(\beta_{1}, \Tilde \sigma^2(\beta_{1})),$ respectively, included as references.} 
 \label{fig:density-fixed}
 \end{figure}

Furthermore, for each $m_{ij}$, $\theta_i$, and $\beta_j$, we construct its 95\% Wald interval based on \eqref{eq:normality}, for which the empirical coverage based on 2000 independent replications is computed.  This result is shown in  Figure~\ref{fig:coverage_prob}, where the two panels correspond to the two simulation settings, respectively. In each panel, the three box plots show the empirical coverage probabilities for entries of $M$, $\theta$, and $\beta$, respectively. As we can see, all these empirical coverage probabilities are close to  the nominal level of 95\%. 

{We also report the average number of iterations for convergence and the average CPU time per iteration as follows. For the above  designs, the  average number of iterations and average CPU time per iteration are (a) 184.70 and 9.24 seconds under setting 1; (b) 176.46 and 47.18 seconds under setting 2.
The convergence criteria is set to be the consecutive change in the joint log-likelihood is smaller than 0.001.} 


\begin{figure}[ht]
  \centering
  \includegraphics[scale=0.5]{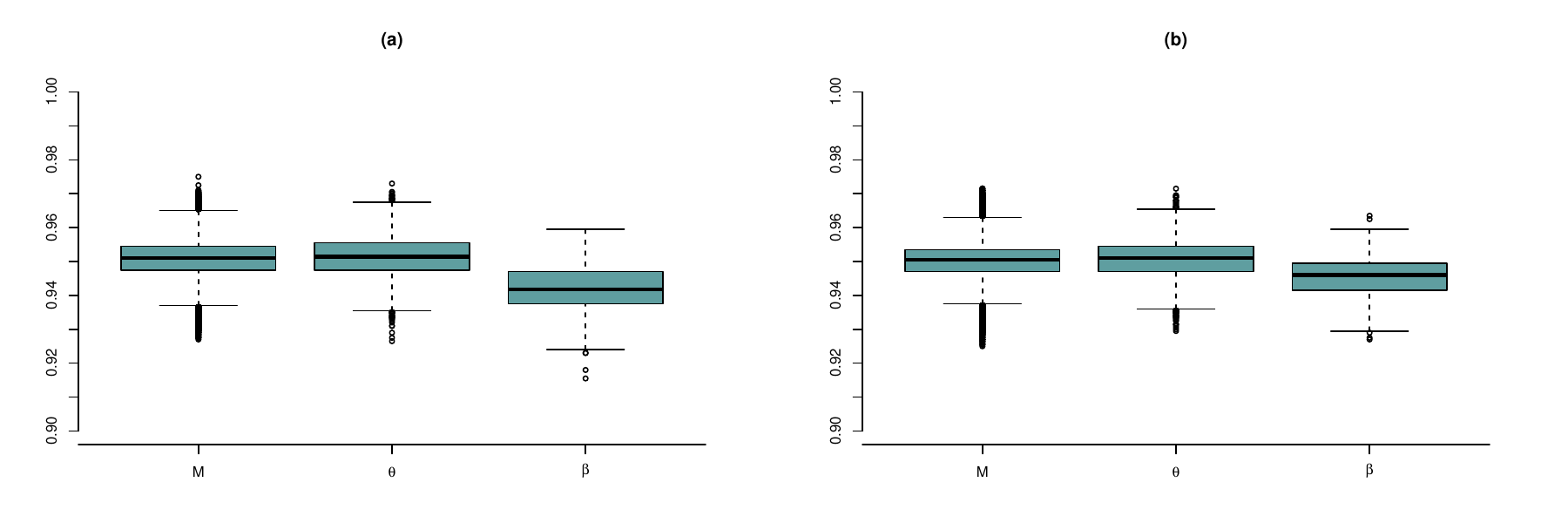}
  \caption{Panels (a) and (b) show the empirical coverage rates for the 95\% Wald intervals under {fixed block-wise} settings (1) and (2), respectively.}
  \label{fig:coverage_prob}
\end{figure}

%


{In addition, to further demonstrate the performance of the likelihood-based estimator, we also conduct a simulation study where $z_{ij}$ are randomly sampled under the setting that $N=5000$ and $J=200$. 
Let $z_{ij}$ be sampled i.i.d. from a Bernoulli distribution with $P(z_{ij} = 1) = 0.5$. The generation of the rest of the parameters and the evaluation techniques for the estimators are the same as in study under fixed block-wise setting. 
Under random sampling setting, the mean squared estimation errors for $M$, $\theta$, and $\beta$ are 0.068, 0.064, and 0.0027, respectively, across all relevant entries and all 2000 independent samples. 
The  average number of iterations and average CPU time per iteration are 182.55 and 13.93 seconds. }

{To examine the variance approximation under random sampling setting, we compare $\hat{\sigma}^2 (g)$, $\tilde{\sigma}^2(g)$ and $s^2(g)$ using the scatter plots of ${s}^2(g)$ against $\bar{\sigma}^2(g)$ in panels (a)-(c) of Figure \ref{fig:var-pairs-random} and the scatter plots of $s^2(g)$ against $\tilde{\sigma}^2(g)$ in panels (d)-(f) of Figure \ref{fig:var-pairs-random}, based on the 2000 simulation replications. From Figure \ref{fig:var-pairs-random}, we see that under random sampling setting, the $\bar{\sigma}^2(g)$, $\tilde{\sigma}^2(g)$, and $s^2(g)$ are close to each other for different $g(M)$.}

{To check the asymptotic normality under the random sampling setting, Figure \ref{fig:density-random} presents the empirical densities of the estimates densities of $m_{11}$, $\theta_1$ and $\beta_1$ over 2000 samples against theoretical curves. The plots show that the empirical distributions agree well with the theoretical normal distributions.}
{Figure \ref{fig:coverage_prob_random} further shows the empirical coverage of 95$\%$ Wald intervals over the 2000  replications for $M, \theta$, and $\beta$. These plots suggest the empirical coverage probabilities are close to the nominal level of 95$\%$.}

\begin{figure}[!ht]
  \centering
  \begin{subfigure}{\linewidth}
    \centering
    \includegraphics[scale=0.5]{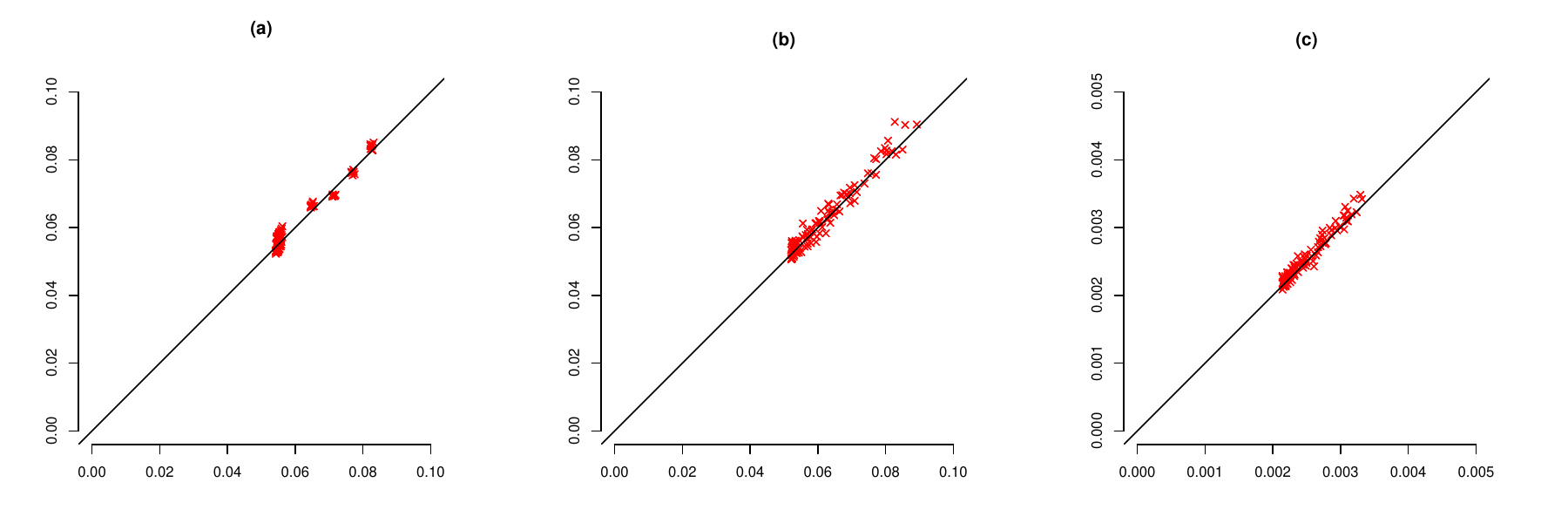}
  \end{subfigure}
  \begin{subfigure}{\linewidth}
    \centering
    \includegraphics[scale=0.5]{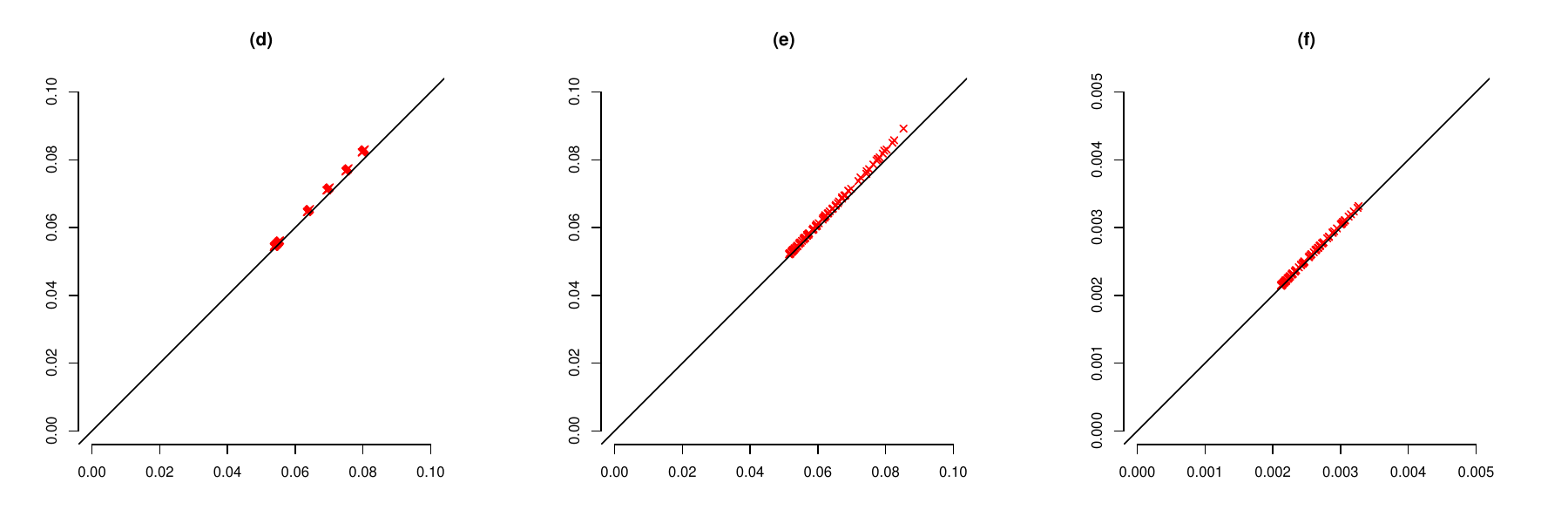}
  \end{subfigure}
  \caption{{Panels (a)-(c) plot $s^2(g)$ against $\bar{\sigma}^2(g)$ for $g(M) = m_{ij}$, $\theta_i$, and $\beta_j$, respectively, and
  Panels (d)-(f) plot $s^2(g)$ against $\tilde{\sigma}^2(g)$ for $g(M) = m_{ij}$, $\theta_i$ and $\beta_j$, respectively,  for the random design setting.
  Each panel shows 100 randomly sampled 
  $m_{ij}$, $\theta_i$, or $\beta_j$ under each setting.  The line $y = x$ is given as a reference.} }  
  \label{fig:var-pairs-random}
\end{figure}

\begin{figure}[ht]
  \centering
  \begin{subfigure}{\linewidth}
    \centering
    \includegraphics[scale=0.5]{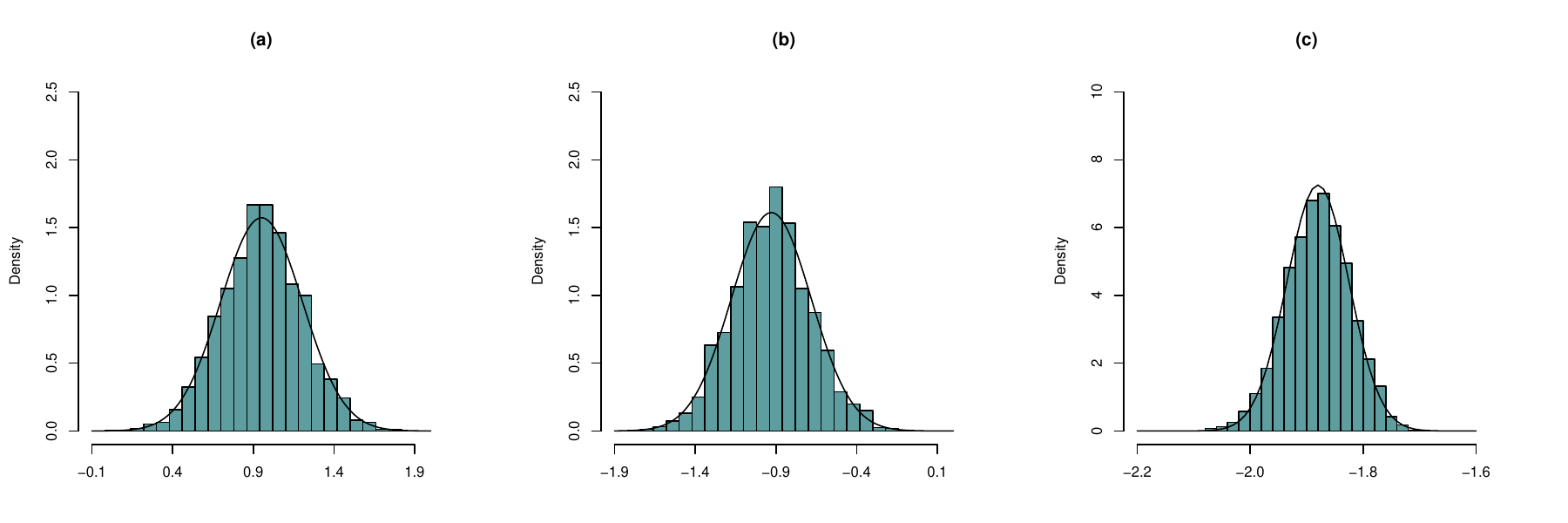}
  \end{subfigure}
  
\caption{{Panels (a)-(c) presents the empirical densities (histograms) of $\hat{m}_{11}$, $\hat{\theta}_{1}$ and $\hat{\beta}_1$ for the random design setting, respectively. 
 The curves are theoretical density curves of N$(m_{11}, \Tilde \sigma^2(m_{11}))$, N$(\theta_{1}, \Tilde \sigma^2(\theta_{1}))$ and N$(\beta_{1}, \Tilde \sigma^2(\beta_{1})),$ respectively, included as references.} }
 \label{fig:density-random}
 \end{figure}

\begin{figure}[ht]
  \centering
  \includegraphics[scale=0.5]{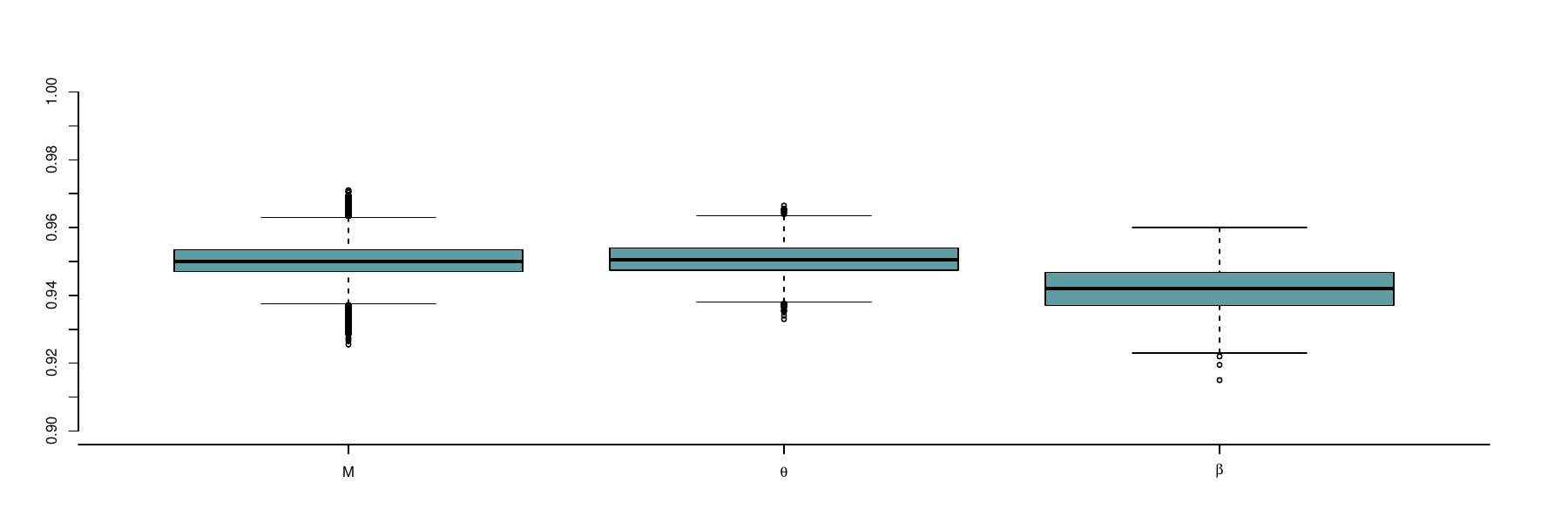}
  \caption{{Boxplots of the empirical coverage rates for the 95\% Wald intervals under the random design setting}.}
  \label{fig:coverage_prob_random}
\end{figure} 


\section{Real-data Applications}\label{sec-real-data}
In what follows, we consider two real-data applications. 


\subsection{Application to Educational Testing}\label{sec-real-data1}

We first apply the proposed method to link
two forms of an educational test that share 
 common items. The dataset is a benchmark dataset for studying linking methods for educational testing  \citep{gonzalez2017applying}. It contains binary responses from
 two forms of a 
college admission test. Each form has 120 items and is answered by 2000 examinees. 
There are 40 common items shared by the two test forms.  
There is no missing data within each test. 
Thus,  $N = 4000$, $J = 200$, and 40\% of the data entries are missing. 
We apply the proposed method to this dataset. Making use of Theorem~\ref{thm-3-sufficient-condition}, 95\% confidence intervals are obtained for both the row (i.e., person) parameters and  the column (i.e., item) parameters. The results allow us to compare students who took different test forms, as well as non-common items from the two forms. For illustration, we randomly choose 100 row parameters and 100 column parameters  and show their  95\% confidence intervals in Figure~\ref{fig: real-data-CI}. 
Such uncertainty quantification can be 
vital for colleges when making admission decisions. 

\begin{figure}[ht]
  \centering{\includegraphics[scale=0.5]{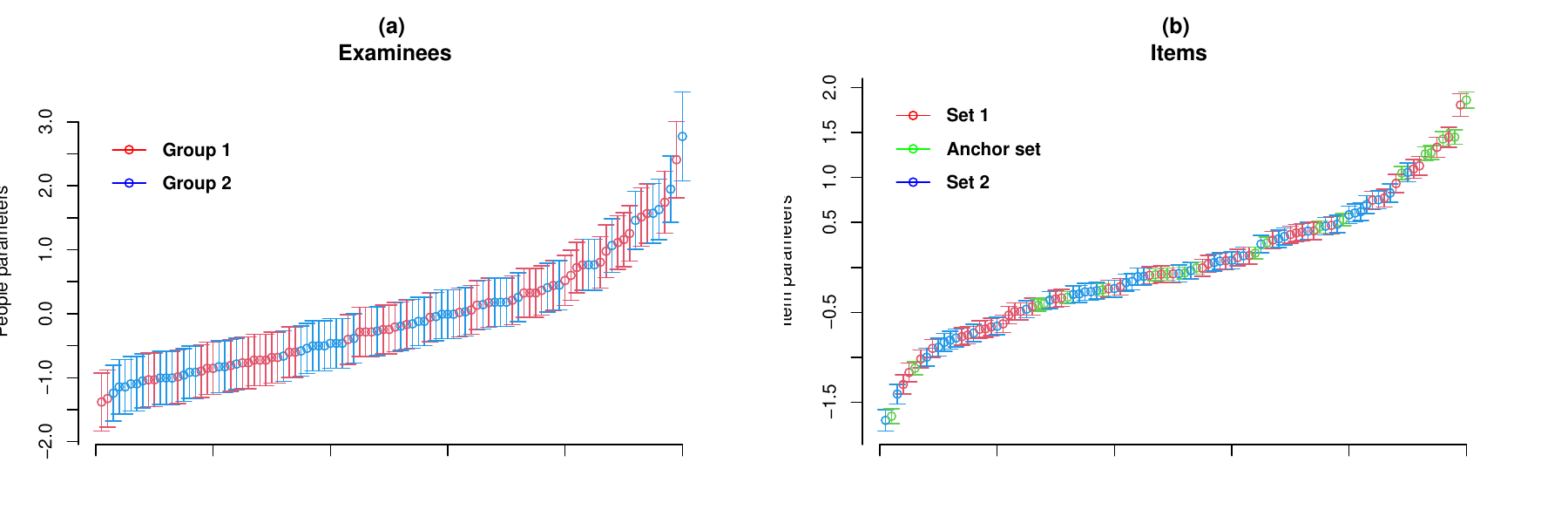}}
  \caption{(a) 95\% confidence intervals of 100 row parameters, with 50 randomly selected from each group. (b) 95\% confidence intervals of the 100 column parameters, with 40 each randomly chosen from group 1 and group 2 and 20 randomly selected from anchor items (i.e., common items).}
  \label{fig: real-data-CI}
\end{figure}

\subsection{Application to Senate Voting}\label{sec-real-data2}

We now apply the proposed method to the United States senate roll call voting data.
Data from the 111th through the 113th congress that include the voting records from 
January 11, 2009, to December 16, 2014. 
Quite a few senators did not serve for the entire period. 

To apply our method to senators’ roll call voting records with $\theta_i$ being interpreted as the conservativeness score of senator $i$, 
we pre-process the data as follows. 
First, five senators who did not serve for more than half a year during the period are removed from the dataset, including Edward M. Kennedy, Joe Biden, Hilary Clinton, Julia Salazar, and Carte Goodwin. Second, 191 bills are removed, as all the observed votes for each of these bills are the same, and consequently, their maximum likelihood estimates do not exist. After these two steps, the resulting dataset contains $N = 139$ senators and $J = 1648$ bills. Finally, for bill $j$ that has higher 
percentage support within the Republican  party than that within the Democratic party, we let 
$Y_{ij} = 1$ if senator $i$ voted for the bill and $Y_{ij} = 0$ if senator $i$ voted against it. For
bill $j$ that has higher 
percentage support within the Democratic  party than that within the Republican party, we let 
$Y_{ij} = 1$ if senator $i$ voted against the bill and  $Y_{ij} = 0$ if he/she voted for it. The value of $Y_{ij}$ is missing if the senator chose not to vote or  he/she was not in the senate when this bill was voted. 
For the final data being analyzed, the proportion of missing entries is 26.1\%, and the connectedness Condition~\ref{cond:connect} is satisfied. The missingness pattern of the dataset is given in 
Figure \ref{fig: heat-map-Z-voting}.  Note that in this example, $N<J$. However, our asymptotic results are still applicable if we simply switch the roles of $N$ and $J$ in the required conditions.

Our asymptotic results allow us to compare senators'  ideological positions, even if they  did not serve in the senate at the same time. For example, 
Judd Gregg served in the senate between January 3, 1993, and January 3, 2011, while 
Marco Rubio started his first term as a senator on January 3, 2011. In our model, Judd Gregg ($\theta_i$) and Marco Rubio ($\theta_k$) have estimated conservativeness scores of 2.59 and 4.25, respectively. Applying our asymptotic results, we have $\hat{\theta}_i-\hat{\theta}_k = -1.66$ and its standard error is 0.169. 
If we test $H_0: \theta_i=\theta_k$ against $H_1: \theta_i\neq \theta_k$, we obtain an extremely small p-value of $9.0\times10^{-23}.$ Therefore, we conclude that senator Marco Rubio is significantly more conservative than senator Judd Gregg.

In addition, we present in 
Tables \ref{table: voting-ranking-conservative} and \ref{table: voting-ranking-liberal} 
the ten senators with the largest row parameter estimates and the ten senators with the smallest row parameter estimates. 
These results align well with the public perceptions of these senators. For example, Jim Demint, who is ranked the most conservative senator in this dataset by our method, 
was also identified by Salon as one of the most conservative members of the Senate \citep{Kornacki2011why}. Our method ranks Mike Lee second, though his conservativeness score is not significantly different from that of Demint. In fact, in 2017, the New York Times used  the NOMINATE system \citep{poole2001d} to arrange Republican senators by ideology and ranked Lee
as the most conservative member of the Senate \citep{Parlapiano2017how}. For another example, 
Brian Schatz, ranked the most liberal senator by our method, is well-known as 
 a liberal Democrat. During his time in the Senate, he voted with the Democratic party on most issues.

 Finally, the  95\% confidence intervals for all the row parameters are shown in Figure \ref{fig:real-data-CI1}, and a full list of rankings for all 139 senators is given in the Appendices, where the corresponding row parameter estimates and their
 standard errors are also presented.

\begin{figure}[ht]
  \centering{\includegraphics[height=4cm, width=12cm]{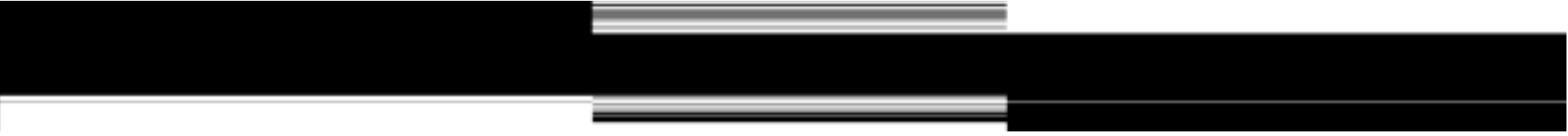}}
  \caption{A heat map of $Z$. The black and white regions correspond to $z_{ij}=1$ and 0, respectively.}
  \label{fig: heat-map-Z-voting}
\end{figure}

\begin{table}[ht]
\centering
\begin{tabular}{lllc} 
 \hline
 Rank &Senator (party) &State & Conservativeness Score (s.e.$(\hat{\theta}$))\\
 \hline
1 & Jim DeMint (Rep) & South Carolina & 5.87 (0.157) \\ 
  2 & Mike Lee   (Rep) & Utah   & 5.73 (0.138) \\ 
  3 & Ted Cruz  (Rep) & Texas   & 5.65 (0.195) \\ 
  4 & Tom Coburn (Rep) & Oklahoma & 5.25 (0.114) \\ 
  5 & Rand Paul (Rep) & Kentucky & 5.24 (0.129) \\ 
  6 & Tim Scott (Rep) & South Carolina & 5.17 (0.176) \\ 
  7 & Jim Bunning (Rep) & Kentucky & 4.92 (0.204) \\ 
  8 & Ron Johnson  (Rep) & Wisconsin & 4.84 (0.119) \\ 
  9 & James Risch (Rep) & Idaho   & 4.81 (0.102) \\ 
  10 & Jim Inhofe  (Rep) & Oklahoma & 4.69 (0.103) \\ 
 \hline
\end{tabular}
\caption{Ranking of the top 10 most conservative senators predicted by the model. Rep and Dem represent the Republican party and the Democratic party, respectively.}
\label{table: voting-ranking-conservative}
\end{table}

\begin{table}[ht]
\centering
\begin{tabular}{lllc} 
 \hline
 Rank & Senator (party) & State & Conservativeness Score (s.e.$(\hat{\theta})$) \\ 
  \hline
  1 & Brian Schatz (Dem) & Hawaii  & -4.74 (0.468) \\ 
  2 & Roland Burris (Dem) & Illinois & -4.43 (0.297) \\ 
  3 & Mazie Hirono (Dem) & Hawaii  & -4.17 (0.383) \\ 
  4 & Cory Booker (Dem) & New Jersey & -4.14 (0.572) \\ 
  5 & Tammy Baldwin (Dem) & Wisconsin & -3.90 (0.352) \\ 
  6 & Sherrod Brown (Dem) & Ohio  & -3.89 (0.168) \\ 
  7 & Tom Udall (Dem) & New Mexico & -3.85 (0.165) \\ 
  8 & Dick Durbin (Dem) & Illinois & -3.83 (0.164) \\ 
  9 & Ben Cardin (Dem) & Maryland & -3.82 (0.163) \\ 
  10& Sheldon Whitehouse (Dem) & Rhode Island & -3.74 (0.163) \\ 
 \hline
\end{tabular}
\caption{Ranking of the top 10 most liberal senators predicted by the model. Rep and Dem represent the Republican party and the Democratic party, respectively.}
\label{table: voting-ranking-liberal}
\end{table}


\begin{figure}[ht]
  \centering{\includegraphics[scale=0.25]{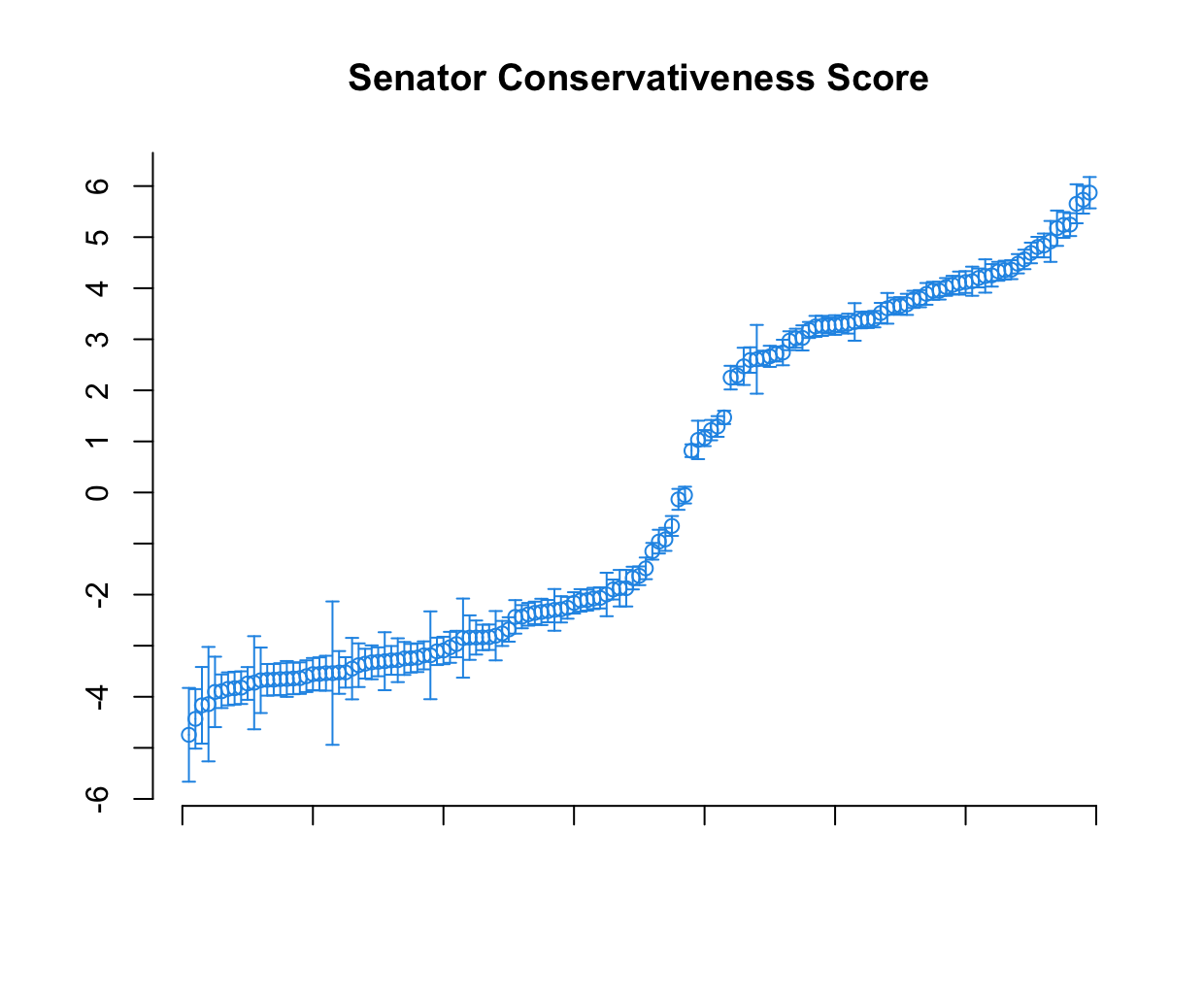}}
  \caption{95\% confidence intervals of 139 row (i.e. senator) parameters. }
  \label{fig:real-data-CI1}
\end{figure}

\section{Discussions}\label{sec-discussion}
This note considers the statistical inference for binary (or 1-bit) matrix completion under 
a unidimensional nonlinear factor model, the Rasch model. 
Asymptotic normality results are established. Our results suggest that the maximum likelihood estimator is statistically efficient, even though the number of parameters diverges. Our simulation study shows that the developed asymptotic result provides a good approximation to finite sample data, and 
two real-data examples demonstrate its usefulness in the areas of educational testing and political science. \yc{One limitation of the current asymptotic normality result is that it requires relatively strong conditions, especially Condition 4(a), which excludes settings where $J_* = O(N_*^{1/2})$. Thus, future research is needed to investigate the extent to which these conditions can be relaxed. } 

The current results can be easily extended to matrix completion problems with a quantized measurement that has a similar natural exponential family form.
Admittedly, the model considered may be oversimple for complex application problems, for example, certain collaborative filtering problems for which the rank of the underlying matrix $M$ may be higher than considered here, and the underlying latent factors may be multi-dimensional. The extension of the current results to more flexible models is left for future investigation. 
As the first inference result for binary matrix completion,  
we believe the current results will shed light on the statistical inference for more general matrix completion problems.


\acks{This research is partially supported by National Science Foundation CAREER SES-1846747 and Institute of Education Sciences R305D200015. }


\vskip 0.2in
\bibliography{bib}

\appendix

\newpage 
\noindent{\bf{\Large Appendix}}\\\\
\noindent The appendix contains the proofs of theorems and proposition in Appendix A, the proofs of the supporting lemmas in Appendix B, and additional real-data application results from Section \ref{sec-real-data2} ``Application to Senate Voting'' in Appendix C.
\section*{Appendix A: Proof of Theorems and Proposition}
Appendix A contains proofs of the theorems and the proposition developed in the main article. 

\begin{proof}[Proof of Theorem \ref{thm:connect}]
This result is directly implied by Theorem B of \cite{bollobas1984diameters}, which shows that under the conditions of Theorem~\ref{thm:connect}, with probability tending to 1, the corresponding bipartite random graph has diameter no larger than $n+1$. This result combined with the fact that a graph is connected if and only if its diameter is finite proves Theorem~\ref{thm:connect}. 
\end{proof}

We now focus on the rest of the theorems and propositions. We start with defining some notation.
Implicitly index $J$  with $N$ such that $J_N \to \infty$ as $N\to \infty$ for notation convenience.  Note that this does not impose any rate requirement for $N$ and $J.$
Let $\Omega_{N}=\big\{x=(x_{ij}: z_{ij}=1, i=1,...,N, j=1,...,J): x_{ij}=\theta_i-\beta_j, \theta_i, \beta_j\in \mathbb{R}, \sum_{i=1}^N \theta_i=0\big\}$ be a vector space.
Define on $\Omega_N$ a variance weighted inner product $[\cdot,\cdot]_{\sigma}$ with $[x, y]_{\sigma}=\sum_{i=1}^N\sum_{j\in S_J(i)}x_{ij}\sigma_{ij}^2y_{ij}$ for any $x, y \in \Omega_{N},$
where $S_J(i)=\{j=1,...,J : z_{ij}=1\}$,   $\sigma_{ij}^2=\exp(m_{ij}^*)/\{1+\exp(m_{ij}^*)\}^2$ and the subscript $\sigma$ means the inner product depends on $\sigma_{ij}^2, i=1,...,N, j=1,...,J, z_{ij}=1$.
Denote the associated norm as $\|\cdot\|_{\sigma}$ with $\|x\|_{\sigma}^2=\sum_{i=1}^N\sum_{j\in S_J(i)}x_{ij}^2\sigma_{ij}^2$ for $x\in \Omega_N.$ 
Let $M_N=\big(m_{ij}: z_{ij}=1, i=1,...,N, j=1,...,J, m_{ij}=\theta_i-\beta_j) \in \Omega_N$, $M_N^*=\big(m_{ij}^*: z_{ij}=1, i=1,...,N, j=1,...,J, m_{ij}^*=\theta_i^*-\beta_j^*) \in \Omega_N$ and $\hat{M}_N=\big(\hat{m}_{ij}: z_{ij}=1, i=1,...,N, j=1,...,J, \hat{m}_{ij}=\hat{\theta}_i-\hat{\beta}_j) \in \Omega_N.$
Note that as a result of Proposition \ref{prop:connect},
for any linear form $g$ of $M$, $g(M)$ can be re-expressed as a linear form of $x\in \Omega_N$, with $g(x)=\sum_{i=1}^N\sum_{j\in S_J(i)}w_{ij}x_{ij}$, where we denote $w_{ij}=w_{ij}(g)$,  which depends on $g$, for notation simplicity. 
Let $\Omega_{N}^*$ consist of all linear forms $g$ on $\Omega_N$ such that $g(x)=0$ if $x=0$ and $x\in \Omega_N.$
Without loss of generality, we will work with $g\in \Omega_{N}^*$ in the proofs.
For any subset $A\subset\Omega_{N}^*$, define  $\|\cdot\|_{\sigma}(A)$ to be the norm on $\Omega_N$ such that for any $x \in \Omega_{N}$, $\|x\|_{\sigma}(A)$ is the smallest non-negative number such that $\vert g(x)\vert \leq \|x\|_{\sigma}(A)\sigma(g)$ for any $g\in A,$ where $\sigma(g)=\sup_{x\in \Omega_N}\{\vert g(x)\vert: \|x\|_{\sigma}\leq 1\}.$
Let $$E_N=\Big(E_{ij}: z_{ij}=1, i=1,...,N, j=1,...,J\Big),$$  with $E_{ij}=\mathbb{E}[Y_{ij}]=e^{m_{ij}^*}/(1+e^{m_{ij}^*}),$ be the vector of expected responses corresponding to the observed entries. 
Further define $R_N \in \Omega_N$ satisfying $$[x, R_N]_{\sigma}= \sum_{i=1}^N\sum_{j\in S_J(i)}x_{ij}(Y_{ij}-E_{ij}), \quad\quad x\in \Omega_N.$$
Define an evaluation measure $U_N(\cdot, \cdot)$ such that for any $y, v \in \Omega_N$, $U_N(y, v) \in \Omega_N$ satisfies
$$[x, U_N(y, v)]_{\sigma}=\sum_{i=1}^N\sum_{j\in S_J(i)}x_{ij}\big\{\sigma^2(y_{ij})-\sigma_{ij}^2\big\}v_{ij}, \quad\quad x\in \Omega_N,$$
where $\sigma^2(y_{ij})= e^{y_{ij}}/(1+e^{y_{ij}})^2$. Note when $y$ is equal to $M_N^*$ or when $v$ is a zero vector, then $U_N(y, v)=0.$  Further denote that 
$w_{i+}=\sum_{j \in S_J(i)} w_{ij},$ $w_{+j}=\sum_{i\in S_N(j)}w_{ij}$ and
$w_{++}=\sum_{i=1}^N\sum_{j \in S_J(i)}w_{ij}$, where $S_N(j)=\{i=1,...,N: z_{ij}=1\}$. We first give proof for Theorem \ref{thm-existence} below.
\begin{proof}[Proof of Theorem \ref{thm-existence}]

We start with establishing the existence of $\hat{M}_{N}$ by applying the fixed point theorems of {\citet[pages 695-711]{kantorovich1964lv}}.
We start with constructing a function $F_N$ on $\Omega_N$ with a fixed point $\hat{M}_N$.
Consider $F_N(y)=y+r_N(y)$ for $y \in \Omega_{N}$, where $r_N: \Omega_N \mapsto \Omega_N$ is defined by the equation,
$$[x, r_N(y)]_\sigma=\sum_{i=1}^N \sum_{j\in S_J(i)}x_{ij}\big\{Y_{ij}-E(y_{ij})\big\},\quad\quad x\in\Omega_N,$$
where $E(y_{ij})=e^{y_{ij}}/(1+e^{y_{ij}}).$
Note that $F_N$ has a fixed point $\omega\in \Omega_N$ if and only if 
$$\sum_{i=1}^N\sum_{j\in S_J(i)}x_{ij}\big\{Y_{ij}-E(\omega_{ij})\big\}=0,\quad\quad x\in \Omega_N.$$
Let $P$ be the orthogonal projection onto $\Omega_N$. 
Let $\hat{E}=\{E(\hat{m}_{ij}): i=1,...,N, j=1,...,J, z_{ij}=1\}$ and $Y_{z}=\{Y_{ij}: i=1,...,N, j=1,...,J, z_{ij}=1\}.$ Then following from {\citet[pages 196-198]{berk1972consistency}}, $\hat{M}_N$ is a maximum likelihood estimator of $M_N^*$ if and only if $P\hat{E}=PY_z.$ Hence, $\hat{M}_N$ exists if and only if $\omega$ exists. 
Furthermore, since the log-likelihood $l(Y_z, \cdot)$ is strictly concave,
if the maximum likelihood estimator $\hat{M}_N$ of $M_N^*$ exists, then it must be unique. Therefore, if $\hat{M}_N$ exists, $\omega=\hat{M}_N.$ 
So, we just need to verify the conditions of the fixed point theorem to show that the fixed point $\omega$ indeed exists.

The Kantorovich \& Akilov's fixed point theorem requires construction of a sequence that converges to the fixed point. 
Consider the sequence $\{t_{Nk}: k=0,1,...\}$, with $t_{N0}=M_N^*$ and $t_{N(k+1)}=F_N(t_{Nk})$ for $k=0,1,...$
Note that $t_{N1}=M_N^*+R_N$. 
To check whether this sequence is well-defined and converges to $\hat{M}_N$, we need to examine the differential $dF_{Ny}$ of $F_N$ at $y \in \Omega_N$. Note that 
for $y+v \in \Omega_N$,
\begin{align*}
[x, F_N(y+v)-F_N(y)]_{\sigma} &= \sum_{i=1}^{N}\sum_{j \in S_{J}(i)}x_{ij}\sigma_{ij}^2\Big[v_{ij}+(\sigma_{ij}^2)^{-1}\big\{E(y_{ij})-E(y_{ij}+v_{ij})\big\}\Big]\\
&=-[x, U_N(y, v)]_{\sigma} + o(v),
\end{align*}
where $o(v)/\|v\|_{\sigma} \to 0$ as $\|v\|_{\sigma} \to 0$. It follows that $dF_{Ny}(v)=-U_N(y, v)$.
Denote $\|dF_{Ny}\|_{\sigma}(A)$ to be the smallest nonnegative number such that 
$$\|dF_{Ny}(v)\|_{\sigma}(A)\leq \|dF_{Ny}\|_{\sigma}(A)\|v\|_{\sigma}(A),\quad \quad v\in \Omega_N.$$
Let $A_{p}$ be the set consisting of all the point maps $f_{ij}$ on $\Omega_N$, i.e. $f_{ij}(x)=x_{ij}$ for any $x\in \Omega_N$. By Lemma  \ref{lemma-sequence-and-bound}(c) below, there exist sequences $f_N$ and $d_N$ such that 
$$
\|dF_{Ny}\|_{\sigma}(A_{p}) \leq d_N \|y-M_N^*\|_{\sigma}(A_{p}) \quad \text{whenever}\quad
\|y-M_N^*\|_{\sigma}(A_{p}) \leq f_N, \quad y\in \Omega_N.
$$
\begin{lemma}\label{lemma-sequence-and-bound}
Assume Conditions \ref{cond:bound}--\ref{cond:speed} hold. If $A_p=\{f_{ij}: i=1,...,N, j=1,...,J, z_{ij}=1\}$ such that $f_{ij}(x)=x_{ij}$ for $x \in \Omega_N$. Let $C_N=|A_p|$, the cardinality of $A_p$.  Then there exist sequences $f_N> 0$ and $d_N\geq 0$ satisfying the followings.

\noindent(a). As $N \to \infty$, $f_N^2/\log C_N \to \infty.$ 

\noindent(b). As $N \to \infty$, $f_N^2(N_{*}^{-1}+J_{*}^{-1}) \to 0.$

\noindent(c). If $y, v \in \Omega_N$ and $\|y-M_N^*\|_{\sigma}(A_p)\leq f_N$, then there exists $n<\infty$ such that for all $ N > n$, $\|U_N(y, v)\|_{\sigma}(A_p) \leq d_N \|y-M_N^*\|_{\sigma}(A_p)\|v\|_{\sigma}(A_p).$ Furthermore, $d_Nf_N \to 0$ as  $N \to \infty$.
\end{lemma}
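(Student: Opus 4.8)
\emph{Proof proposal.} The plan is to reduce the abstract seminorm $\|\cdot\|_{\sigma}(A_p)$ to a concrete weighted supremum norm, bound the leverage-type quantities $\sigma(f_{ij})$, then estimate $U_N(y,v)$ coordinatewise, and only at the end choose $f_N$ and $d_N$. Since $A_p$ consists of the coordinate functionals $f_{ij}(x)=x_{ij}$, one has $\|x\|_{\sigma}(A_p)=\max_{(i,j):z_{ij}=1}|x_{ij}|/\sigma(f_{ij})$ with $\sigma(f_{ij})=\sup\{|x_{ij}|:x\in\Omega_N,\ \|x\|_{\sigma}\le 1\}$. So the first step is to show that, under Conditions~\ref{cond:speed} and \ref{cond:bound}, there are constants $0<c_1\le c_2<\infty$ with $c_1(J_*^{-1}+N_*^{-1})\le\sigma(f_{ij})^2\le c_2(J_*^{-1}+N_*^{-1})$ for every observed $(i,j)$ and all large $N$: the lower bound by exhibiting a near-optimal test vector in $\Omega_N$ concentrated around entry $(i,j)$, and the upper bound from the spectral structure of the $[\cdot,\cdot]_{\sigma}$-Gram operator of $\Omega_N$, which the balanced-missingness Condition~\ref{cond:speed}(c) keeps well-conditioned relative to its diagonal $(\sigma_{i+}^2,\sigma_{+j}^2)$. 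Because we have assumed $N_*>J_*$ and $J_*\gtrsim J^{2/3}$, this gives $\sigma(f_{ij})^2\asymp J_*^{-1}$ uniformly.

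For part (c), I would identify $U_N(y,v)$ with the $[\cdot,\cdot]_{\sigma}$-orthogonal projection onto $\Omega_N$ of the vector whose $(k,l)$ coordinate is $(\sigma_{kl}^2)^{-1}\{\sigma^2(y_{kl})-\sigma_{kl}^2\}v_{kl}$, so that $(U_N(y,v))_{ij}=\theta_i-\beta_j$ where $(\theta,\beta)$ solve the weighted normal equations whose right-hand sides are the weighted row and column sums of $r_{kl}:=\{\sigma^2(y_{kl})-\sigma_{kl}^2\}v_{kl}$. Global Lipschitz continuity of $t\mapsto e^t/(1+e^t)^2$ gives $|r_{kl}|\le L|y_{kl}-m_{kl}^*|\,|v_{kl}|\le L\|y-M_N^*\|_{\sigma}(A_p)\|v\|_{\sigma}(A_p)\,\sigma(f_{kl})^2$, hence $|r_{kl}|\le C\|y-M_N^*\|_{\sigma}(A_p)\|v\|_{\sigma}(A_p)(J_*^{-1}+N_*^{-1})$. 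Inserting this into the normal equations, the leading part of $\theta_i$ is $(\sigma_{i+}^2)^{-1}\sum_{l\in S_J(i)}r_{il}$, a sum of at most $J^*$ terms over $\sigma_{i+}^2\asymp J_*$, and similarly for $\beta_j$ with $N^*$ terms over $\sigma_{+j}^2\asymp N_*$; since Condition~\ref{cond:speed}(c) forces $J^*\asymp J_*$ and $N^*\asymp N_*$, this yields $|(U_N(y,v))_{ij}|\le C'\|y-M_N^*\|_{\sigma}(A_p)\|v\|_{\sigma}(A_p)(J_*^{-1}+N_*^{-1})$. Dividing through by $\sigma(f_{ij})\asymp(J_*^{-1}+N_*^{-1})^{1/2}$ gives the claimed inequality with $d_N:=C''(J_*^{-1}+N_*^{-1})^{1/2}\asymp J_*^{-1/2}$.

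The main obstacle is the ``leading part'' step: the normal equations couple $\theta$ and $\beta$, so $(U_N(y,v))_{ij}$ is not literally the simple weighted sum above, and the cross terms (the $\sum_{l}\sigma_{il}^2\beta_l$ piece in the equation for $\theta_i$, and its analogue for $\beta_j$) must be shown negligible uniformly in $(i,j)$, not merely in an $\ell_2$ sense — a naive $\ell_2$/contractivity bound on the projection loses a factor of order $\sqrt{NJ/J_*}$ and is useless here. I would handle the cross terms exactly as in the $\ell_\infty$-consistency analysis: a Neumann-type expansion of the inverse of the weighted Gram operator restricted to $\Omega_N$, whose off-diagonal block is a small multiple of the diagonal block precisely because of Condition~\ref{cond:speed}(c), or equivalently a one-step contraction started from the leading-order guess. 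The hypothesis $\|y-M_N^*\|_{\sigma}(A_p)\le f_N$ and ``$N$ large'' enter here: together with part (b) they force $\max_{kl}|y_{kl}-m_{kl}^*|\lesssim(\log N/J_*)^{1/4}\to 0$, keeping all relevant quantities in the regime where these structural estimates hold.

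Finally, to pick $f_N$, note $C_N=\sum_{i,j}z_{ij}$ satisfies $NJ_*\le C_N\le NJ\le N^2$, so $\log C_N\asymp\log N$, and take $f_N:=(J_*\log N)^{1/4}$. Then $f_N^2/\log C_N\asymp(J_*/\log N)^{1/2}\to\infty$ by Condition~\ref{cond:speed}(b), giving (a); $f_N^2(N_*^{-1}+J_*^{-1})\asymp(\log N/J_*)^{1/2}\to 0$ by the same condition, giving (b); and $d_Nf_N\asymp(\log N/J_*)^{1/4}\to 0$, closing (c). Non-emptiness of the admissible window $\sqrt{\log N}\ll f_N\ll\sqrt{J_*}$ is exactly the statement $\log N=o(J_*)$, i.e.\ Condition~\ref{cond:speed}(b).
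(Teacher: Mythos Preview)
Your proposal is correct and follows essentially the same route as the paper, but packaged from the dual side. The paper does not solve the normal equations for $U_N(y,v)$ directly; instead it writes $(U_N(y,v))_{ij}=\sum_{i',j'}d_{i'j'}(f_{ij})\{\sigma^2(y_{i'j'})-\sigma_{i'j'}^2\}v_{i'j'}$ via the Riesz representer $d(f_{ij})\in\Omega_N$ of the point functional, bounds the two scalar factors exactly as you do, and then shows $\sum_{i',j'}|d_{i'j'}(f_{ij})|\le\text{const.}$ using a structural decomposition $d=d'+d''$ (Lemmas~\ref{lemma-sigma(g)}--\ref{lemma-d''n}) where $d'_{i'j'}(f_{ij})=(\sigma_{i'+}^2)^{-1}\mathbb{1}\{i'=i\}+(\sigma_{+j'}^2)^{-1}\mathbb{1}\{j'=j\}-(\sigma_{++}^2)^{-1}$ is your ``leading part'' and $\|d''(f_{ij})\|_\sigma^2=O(N_*^{-1}J_*^{-2})$ is your ``Neumann correction''. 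The two-sided bound on $\sigma(f_{ij})$ you need is exactly Lemma~\ref{lemma-peopleitemdiffvar}, also proved via the same $d',d''$ machinery. The paper leaves $f_N$ implicit in the window $\log N^*\ll f_N^2\ll J_*$; your geometric-mean choice $f_N=(J_*\log N)^{1/4}$ is a fine concrete representative. One small correction: Condition~\ref{cond:speed}(c) only gives $J^*\asymp J_*$, not $N^*\asymp N_*$; fortunately your $\beta_j$ estimate does not actually need the latter, since $|S_N(j)|/\sigma_{+j}^2$ is uniformly bounded regardless.
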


As shown in {\citet[pages 695-711]{kantorovich1964lv}}, if $\|R_N\|_{\sigma}(A_{p}) < \frac{1}{2}f_N$ and $d_N\|R_N\|_{\sigma}(A_{p}) < \frac{1}{2}$, then $\hat{M}_N$ exists. 
By Lemma \ref{lemma-bound-Zn} below, we have pr$(\|R_N\|_{\sigma}(A_{p}) < \frac{1}{2}f_N) \to 1$ as $N \to \infty$. Therefore, it follows from Lemma \ref{lemma-sequence-and-bound}(c) that with probability tending to 1,
$d_N\|R_N\|_{\sigma}(A_{p})<\frac{1}{2}f_Nd_N\to 0$.

\begin{lemma}\label{lemma-bound-Zn}
Let $A \subset \Omega_{N}^*.$ 
Let $C_N$ denote the cardinality of $A$. 
If there exist sequences $f_N> 0$ and $d_N\geq 0$ satisfying
(a). $0<C_N<\infty$ and $f_N^2/\log C_n \to \infty$ as $N \to \infty,$ (b). If $y, v \in \Omega_N$ and $\|y-M_N^*\|_{\sigma}(A)\leq f_N$, then there exists $n<\infty$ such that for all $ N > n$, $\|U_N(y, v)\|_{\sigma}(A) \leq d_N \|y-M_N^*\|_{\sigma}(A)\|v\|_{\sigma}(A),$ (c). $d_Nf_N \to 0$ as  $N \to \infty$.
Then pr$\big(\|R_N\|_{\sigma}(A) < \frac{1}{2}f_N \big) \to 1$ as $N \to \infty.$
\end{lemma}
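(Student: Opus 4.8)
The plan is to recognize $\|R_N\|_\sigma(A)$ as the maximum, over the finitely many members of $A$, of a self-normalized sum of independent bounded mean-zero random variables, and then to combine a Hoeffding-type exponential tail bound with a union bound, using hypothesis~(a) to absorb the cardinality $C_N$.

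First I would pass to the Riesz representation. Since $\Omega_N$ is finite dimensional and $[\cdot,\cdot]_\sigma$ is an inner product on it, each linear form $g\in\Omega_N^*$ admits a representative $\tilde g\in\Omega_N$ with $g(x)=[x,\tilde g]_\sigma$ for all $x\in\Omega_N$, and then $\sigma(g)=\sup\{|g(x)|:\|x\|_\sigma\le 1\}=\|\tilde g\|_\sigma$. Unwinding the definition of $\|\cdot\|_\sigma(A)$ together with the defining identity $[x,R_N]_\sigma=\sum_{i,j:z_{ij}=1}x_{ij}(Y_{ij}-E_{ij})$ yields
$$T_g:=\frac{g(R_N)}{\sigma(g)}=\frac{\sum_{i,j:z_{ij}=1}\tilde g_{ij}(Y_{ij}-E_{ij})}{\bigl(\sum_{i,j:z_{ij}=1}\tilde g_{ij}^2\sigma_{ij}^2\bigr)^{1/2}},\qquad \|R_N\|_\sigma(A)=\max_{g\in A}|T_g|.$$
For a fixed $g$, the numerator of $T_g$ is a sum of the independent mean-zero terms $\tilde g_{ij}(Y_{ij}-E_{ij})$ (independence holding because, given $M$, the $Y_{ij}$ are independent with a law not depending on $Z$), each supported on an interval of length $|\tilde g_{ij}|$ since $Y_{ij}\in\{0,1\}$, with total variance $\sum_{i,j:z_{ij}=1}\tilde g_{ij}^2\sigma_{ij}^2$, which is exactly the square of the denominator.

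Next I would concentrate $T_g$ for each fixed $g$. This is the one place Condition~\ref{cond:bound} enters: it forces $|m_{ij}^*|\le 2c$, hence $\sigma_{ij}^2\ge\sigma_{\min}^2:=e^{2c}/(1+e^{2c})^2>0$ uniformly in $i,j$, so that $\sum_{i,j:z_{ij}=1}\bigl(\tilde g_{ij}/\|\tilde g\|_\sigma\bigr)^2\le\sigma_{\min}^{-2}$. Hoeffding's inequality then gives $\mathrm{pr}(|T_g|\ge t)\le 2\exp(-2\sigma_{\min}^2 t^2)$, with $\sigma_{\min}^2$ a constant depending only on $c$. Choosing $t=\tfrac12 f_N$ and taking a union bound over the $C_N$ elements of $A$,
$$\mathrm{pr}\bigl(\|R_N\|_\sigma(A)\ge\tfrac12 f_N\bigr)\le 2C_N\exp\bigl(-\tfrac12\sigma_{\min}^2 f_N^2\bigr)=2\exp\bigl(\log C_N-\tfrac12\sigma_{\min}^2 f_N^2\bigr).$$

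Finally I would invoke hypothesis~(a): since $f_N^2/\log C_N\to\infty$, the exponent $\log C_N-\tfrac12\sigma_{\min}^2 f_N^2\to-\infty$, so the last display tends to $0$ and $\mathrm{pr}(\|R_N\|_\sigma(A)<\tfrac12 f_N)\to 1$, which is the claim. (Hypotheses~(b) and~(c) are not needed here; they are carried along only so that the statement matches the conditions verified in Lemma~\ref{lemma-sequence-and-bound}.) I do not foresee a real obstacle: the only care points are the Riesz/self-normalization bookkeeping and checking that $|\tilde g_{ij}|/\|\tilde g\|_\sigma\le\sigma_{\min}^{-1}$ is genuinely uniform, both of which reduce to the uniform lower bound on $\sigma_{ij}^2$ supplied by Condition~\ref{cond:bound}; the rest is a standard concentration-plus-union-bound argument.
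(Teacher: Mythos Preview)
Your argument is correct and more elementary than the paper's, but it takes a genuinely different route and trades one hypothesis for another. You control $T_g$ by Hoeffding's inequality, which requires the uniform lower bound $\sigma_{ij}^2\ge\sigma_{\min}^2$ coming from Condition~\ref{cond:bound}. That condition, however, is \emph{not} among the stated hypotheses of Lemma~\ref{lemma-bound-Zn}; the lemma is phrased only in terms of (a), (b), (c). The paper's proof instead computes the moment generating function of $W_k$ exactly via a second-order Taylor expansion of $\log\{1+\exp(\cdot)\}$, obtaining $\log G_k(t)=\tfrac{t^2}{2}\sum_{i,j}w_{ijk}'^{\,2}\sigma^2(m_{ij}^*+t'w_{ijk}')$, then writes this as $\tfrac{t^2}{2}\bigl(1+[w_k',U_N(M_N^*+t'w_k',\,w_k')]_\sigma\bigr)$ and bounds the second term by $\tfrac12 d_Nf_N$ using hypothesis~(b) together with $\|w_k'\|_\sigma(A)\le\|w_k'\|_\sigma=1$. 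A Chernoff (Bahadur) bound plus the union bound then gives $\mathrm{pr}\{\|R_N\|_\sigma(A)\ge\tfrac12 f_N\}\le 2\exp\{\log C_N-\tfrac18 f_N^2(1-\tfrac12 d_Nf_N)\}$, and (a) and (c) finish. So hypotheses~(b) and~(c) are \emph{not} idle in the paper's proof: they replace exactly the role Condition~\ref{cond:bound} plays in yours. In the paper's applications Condition~\ref{cond:bound} is always in force, so your version suffices there; but as a proof of the lemma \emph{as stated} you are importing an extra assumption. If you want a self-contained proof under (a)--(c) alone, swap Hoeffding for the exact-MGF/$U_N$ argument above.
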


Hence, the conditions of the fixed point theorem are satisfied with probability approaching 1. It then follows that the maximum likelihood estimators $\hat{M}_N$ exists with probability tending to 1. Since Condition~\ref{cond:connect} holds, as a direct consequence of Proposition \ref{prop:connect}, the corresponding maximum likelihood estimators $\hat{\theta}_i$, $i=1,...,N$ and $\hat{\beta}_j$, $j=1,...,J$ can be uniquely determined given $\hat{M}_N$. Therefore, with probability approaching 1 that they all exist, as $N\to \infty$. The first part of the theorem then follows. 

Now we seek to prove the consistency results. 
Taking sequences $f_N$ and $d_N$ again as satisfying the results in Lemma \ref{lemma-sequence-and-bound} and $A=A_p.$ Then both Lemmas \ref{lemma-bound-Zn} and \ref{lemma-bound-diff-btw-estimate-truth} hold. 
From the results of Lemmas \ref{lemma-bound-Zn} and \ref{lemma-bound-diff-btw-estimate-truth}, it can be implied that as $N \to \infty$, with probability tending to 1 that, 
\begin{align*}
 \|\hat{M}_N-M_N^*\|_{\sigma}(A_{p})=O(f_N).\numberthis\label{eq: bound-parameter-diff} 
\end{align*}
From {\citet[pages 822-824]{haberman1977maximum}}, $\sigma(g)$ is in fact the standard deviation of $g(\hat{M}_N)$. We further note by Lemma \ref{lemma-peopleitemdiffvar} below,
\begin{align*}
 \max_{g\in A_p} \sigma(g)\leq\tau_2^{-1}(N_{*}^{-1}+J_{*}^{-1})^{\frac{1}{2}},\numberthis\label{eq: upper-bound-var-mij}
\end{align*} 
for some $0<\tau_2<\infty$.
\begin{lemma}\label{lemma-bound-diff-btw-estimate-truth}
Assume Conditions \ref{cond:bound}--\ref{cond:speed} hold.
Let $A \subset \Omega_{N}^*.$  If there exist sequences $f_N > 0$ and $d_N\geq 0$ satisfying
(a). pr$\big(\|R_N\|_{\sigma}(A) < \frac{1}{2}f_N \big) \to 1$ as $N \to \infty,$ (b). If $y, v \in \Omega_N$ and $\|y-M_N^*\|_{\sigma}(A)\leq f_N$, then there exists $n<\infty$ such that for all $ N > n$, $\|U_N(y, v)\|_{\sigma}(A) \leq d_N \|y-M_N^*\|_{\sigma}(A)\|v\|_{\sigma}(A),$ (c). $d_Nf_N \to 0$ as  $N \to \infty$. Then, as $N\to \infty$, with probability approaching 1 that,
\begin{align*}
\Big\vert\frac{\|\hat{M}_N-M_N^*\|_{\sigma}(A)}{\|R_N\|_{\sigma}(A)}-1\Big\vert\leq d_N^{\frac{1}{2}}\to 0 \quad\text{and}\quad
\|\hat{M}_N-M_N^*-R_N\|_{\sigma}(A)\leq d_N\|R_N\|_{\sigma}^2(A).
\end{align*}
\end{lemma}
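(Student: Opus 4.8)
The plan is to reduce the lemma to an analysis of the deterministic Newton-type iteration $t_{N0}=M_N^*$, $t_{N(k+1)}=F_N(t_{Nk})$ introduced above: I will show that, under hypotheses (a)--(c), this sequence is Cauchy in $\|\cdot\|_{\sigma}(A)$ with probability tending to $1$, identify its limit with the maximum likelihood estimator $\hat M_N$, and read off the two claimed bounds from the rate of convergence. In effect this lemma is the quantitative fixed-point step, in the spirit of \cite{kantorovich1964lv} and \cite{haberman1977maximum}, that combines the probabilistic control of $R_N$ (Lemma~\ref{lemma-bound-Zn}) with the local Lipschitz control of $U_N$ in hypothesis (b). The first ingredient I would record is the exact integral form of the increment of $F_N$: because $E'(t)=e^{t}/(1+e^{t})^{2}=\sigma^{2}(t)$, the linearization $dF_{Ny}(v)=-U_N(y,v)$ computed above integrates to $F_N(y+v)-F_N(y)=-\int_{0}^{1}U_N(y+sv,v)\,ds$ for $y,y+v\in\Omega_N$.

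Evaluating this along the iteration with $y=t_{N(k-1)}$, $v=t_{Nk}-t_{N(k-1)}$ and writing $\delta_k:=\|t_{N(k+1)}-t_{Nk}\|_{\sigma}(A)$, hypothesis (b) together with the triangle inequality for integrals gives the recursion
\begin{align*}
\delta_k\ \le\ d_N\Big(\sup_{0\le s\le 1}\big\|\,t_{N(k-1)}+s(t_{Nk}-t_{N(k-1)})-M_N^*\,\big\|_{\sigma}(A)\Big)\,\delta_{k-1},
\end{align*}
valid as long as every point on the segment stays in the ball $\{\|y-M_N^*\|_{\sigma}(A)\le f_N\}$. Since $r_N(M_N^*)=R_N$ one has $t_{N1}=M_N^*+R_N$, so $\delta_0=\|R_N\|_{\sigma}(A)$, and for $k=1$ the relevant segment is exactly $\{M_N^*+sR_N:0\le s\le1\}$, which yields the sharper bound $\delta_1\le\tfrac12 d_N\delta_0^{2}$. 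I would then run the induction on the event $\{\|R_N\|_{\sigma}(A)<\tfrac12 f_N\}$ --- of probability tending to $1$ by (a) --- and for $N$ large enough that $q:=2d_N\delta_0\le d_N f_N\le\tfrac12$, which is possible by (c): assuming $\|t_{Nl}-M_N^*\|_{\sigma}(A)\le 2\delta_0\le f_N$ for $l\le k$, the recursion applies and gives $\delta_k\le q\,\delta_{k-1}$, hence $\sum_{l\le k}\delta_l\le\delta_0+\delta_1/(1-q)\le\delta_0(1+d_N\delta_0)\le 2\delta_0$, so the next iterate also lies in the radius-$2\delta_0$ ball and the induction closes. Thus $\{t_{Nk}\}$ is Cauchy, its limit is a fixed point of $F_N$, and by strict concavity of the log-likelihood (uniqueness of the maximum likelihood estimator) the limit equals $\hat M_N$.

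The two conclusions then follow from $\hat M_N-t_{N1}=\sum_{k\ge1}(t_{N(k+1)}-t_{Nk})$. Using $\delta_1\le\tfrac12 d_N\delta_0^{2}$ and $\delta_k\le q\,\delta_{k-1}$ with $q\le\tfrac12$,
\begin{align*}
\|\hat M_N-M_N^*-R_N\|_{\sigma}(A)\ \le\ \sum_{k\ge1}\delta_k\ \le\ \frac{\delta_1}{1-q}\ \le\ d_N\delta_0^{2}=d_N\|R_N\|_{\sigma}^{2}(A),
\end{align*}
which is the second assertion; the triangle inequality then gives $\big|\,\|\hat M_N-M_N^*\|_{\sigma}(A)-\|R_N\|_{\sigma}(A)\,\big|\le d_N\|R_N\|_{\sigma}^{2}(A)$, and dividing by $\|R_N\|_{\sigma}(A)$ yields $\big|\,\|\hat M_N-M_N^*\|_{\sigma}(A)/\|R_N\|_{\sigma}(A)-1\,\big|\le d_N\|R_N\|_{\sigma}(A)\le d_N f_N\to 0$ by (a) and (c), which is in particular bounded by $d_N^{1/2}$ for large $N$ under the operative rate conditions. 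The main obstacle is the bookkeeping in the induction: one has to keep every iterate inside the radius-$f_N$ ball where the local Lipschitz estimate (b) is available while simultaneously propagating the geometric decay of the $\delta_k$, and one must retain the extra factor $\delta_0$ in the bound on $\delta_1$ so that the final error is quadratic --- not merely linear --- in $\|R_N\|_{\sigma}(A)$.
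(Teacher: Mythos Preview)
Your approach is correct and is essentially the same Newton--Kantorovich argument as the paper's, only unpacked: the paper cites Kantorovich and Akilov (1964, pp.~695--711) for the ready-made majorant bound $\|t_{Nk}-\hat M_N\|_\sigma(A)\le l_N-h_{Nk}$ with $l_N=2z_N/\{1+(1-2z_Nd_N)^{1/2}\}$ and then reads off $l_N-z_N\le d_Nz_N^{2}$ for $k=0,1$, whereas you redo the fixed-point step by hand via the integral identity $F_N(y+v)-F_N(y)=-\int_0^1U_N(y+sv,v)\,ds$ and geometric decay of the increments $\delta_k$. Your version is more self-contained; the paper's is shorter.

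One point to tighten: for a general $A\subset\Omega_N^*$, the functional $\|\cdot\|_\sigma(A)$ is only a seminorm on $\Omega_N$, so ``$\{t_{Nk}\}$ Cauchy in $\|\cdot\|_\sigma(A)$'' does not by itself produce a limit in $\Omega_N$, nor identify it with $\hat M_N$. Under the assumed Conditions~\ref{cond:speed}--\ref{cond:connect}, however, Theorem~\ref{thm-existence} (proved with $A=A_p$, where $\|\cdot\|_\sigma(A_p)$ \emph{is} a norm) already gives $t_{Nk}\to\hat M_N$ in $\Omega_N$; your summable-increment bound then transfers to $\|\hat M_N-M_N^*-R_N\|_\sigma(A)\le\sum_{k\ge1}\delta_k\le d_N\delta_0^{2}$ by continuity of the seminorm, and you can drop the separate existence claim. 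Finally, your last step ``$d_Nf_N\le d_N^{1/2}$ for large $N$'' does not follow from (c) alone---it needs $d_Nf_N^{2}=O(1)$---but this holds in every instance the lemma is invoked in the paper, and the paper's own passage from $|a-b|\le d_Nb^{2}$ to $|a-b|^{2}\le d_Nb^{2}$ carries the same unstated requirement.
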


\begin{lemma}\label{lemma-peopleitemdiffvar}
Assume Conditions \ref{cond:bound}--\ref{cond:speed} hold and $\sum_{i=1}^N\theta_i=0$, the asymptotic variance of the maximum likelihood estimator of $m_{ij}^*,$ $var(\hat{m}_{ij})$,  for any $i=1,...,N$ and $j=1,...,J$, takes the form,
\begin{align*}
var(\hat{m}_{ij}) = (\sigma_{i+}^2)^{-1}+(\sigma_{+j}^2)^{-1}+O (N_{*}^{-1}J_{*}^{-1} )\quad \text{as} \quad N \to \infty.
\end{align*}
\end{lemma}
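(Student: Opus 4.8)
The plan is to reduce the lemma to a purely linear-algebraic estimate: compute the asymptotic variance $\sigma^2(f_{ij})$ of the coordinate functional $f_{ij}(x)=x_{ij}$ on $\Omega_N$ and extract the claimed expansion from the block structure of the Fisher information. By the identity recalled in the proof of Theorem~\ref{thm-existence} (from \cite{haberman1977maximum}), the asymptotic variance of $g(\hat M_N)$ equals $\sigma^2(g)$; together with $\sigma(g)=\sup_{x\in\Omega_N}\{|g(x)|:\|x\|_\sigma\le1\}$ and the parameterization $x=(\theta_k-\beta_l)_{z_{kl}=1}$ of $\Omega_N$ by $(\theta,\beta)$, this gives $\operatorname{var}(\hat m_{ij})=\sup\{(\theta_i-\beta_j)^2:(\theta,\beta)^{\top}\mathcal I(\theta,\beta)\le1\}$, where $\mathcal I$ is exactly the Fisher information and has the bipartite block form $\mathcal I=\bigl(\begin{smallmatrix}D_\theta&-B\\-B^{\top}&D_\beta\end{smallmatrix}\bigr)$ with $D_\theta=\operatorname{diag}(\sigma_{1+}^2,\dots,\sigma_{N+}^2)$, $D_\beta=\operatorname{diag}(\sigma_{+1}^2,\dots,\sigma_{+J}^2)$, $B=(z_{ij}\sigma_{ij}^2)$; note that $B\mathbf 1_J$ is the diagonal of $D_\theta$ and $B^{\top}\mathbf 1_N$ that of $D_\beta$. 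By Condition~\ref{cond:connect}, $\ker\mathcal I=\operatorname{span}(\mathbf 1_N;\mathbf 1_J)$, and since the contrast vector $e=(e_i;-e_j)$ selecting $\theta_i-\beta_j$ (with $e_i,e_j$ coordinate vectors) is orthogonal to $(\mathbf 1_N;\mathbf 1_J)$, we get $\operatorname{var}(\hat m_{ij})=e^{\top}\mathcal I^{+}e$ for the Moore--Penrose inverse $\mathcal I^{+}$.

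Next I would block-invert through the Schur complement. Using the factorization $\mathcal I=\bigl(\begin{smallmatrix}I&-BD_\beta^{-1}\\0&I\end{smallmatrix}\bigr)\bigl(\begin{smallmatrix}S&0\\0&D_\beta\end{smallmatrix}\bigr)\bigl(\begin{smallmatrix}I&0\\-D_\beta^{-1}B^{\top}&I\end{smallmatrix}\bigr)$ with $S=D_\theta-BD_\beta^{-1}B^{\top}$ positive semidefinite and $\ker S=\operatorname{span}(\mathbf 1_N)$ (since $S\mathbf 1_N=B\mathbf 1_J-B\mathbf 1_J=0$), a short computation — legitimate because $e\perp\ker\mathcal I$, so the relevant quadratic form matches the naive block formula even though $\mathcal I^{+}$ itself does not — yields $\operatorname{var}(\hat m_{ij})=S^{+}_{ii}+(\sigma_{+j}^2)^{-1}+(D_\beta^{-1}B^{\top}S^{+}BD_\beta^{-1})_{jj}-2(S^{+}BD_\beta^{-1})_{ij}$. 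Thus $(\sigma_{+j}^2)^{-1}$ is one leading term, and it remains to show $S^{+}_{ii}=(\sigma_{i+}^2)^{-1}+O(N_*^{-1}J_*^{-1})$ and that the last two terms are $O(N_*^{-1}J_*^{-1})$.

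For these I would write $S=D_\theta^{1/2}(I-P)D_\theta^{1/2}$ with $P=D_\theta^{-1/2}BD_\beta^{-1}B^{\top}D_\theta^{-1/2}$ symmetric positive semidefinite — a matrix similar to the reversible row$\to$column$\to$row Markov kernel $D_\theta^{-1}BD_\beta^{-1}B^{\top}$ — whose unique unit eigenvector is $D_\theta^{1/2}\mathbf 1_N$. Condition~\ref{cond:bound} confines each $\sigma_{ij}^2$ to a fixed compact subset of $(0,\tfrac14]$, so (using Condition~\ref{cond:speed}(c)) $\sigma_{i+}^2\asymp J_*$ — hence $D_\theta$ has bounded condition number — and $\sigma_{+j}^2\gtrsim N_*$, which readily gives that every entry of $D_\theta^{-1}BD_\beta^{-1}B^{\top}D_\theta^{-1}$ is $O(N_*^{-1}J_*^{-1})$ and that $(B^{\top}D_\theta^{-1}B)_{jj}\lesssim\sigma_{+j}^2/J_*$. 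The key analytic input is a uniform spectral-gap estimate $\lambda_2(P)\le 1-c$ for a fixed $c>0$; granting it, $(I-P)^{+}$ has operator norm $\le c^{-1}$ on $(D_\theta^{1/2}\mathbf 1_N)^{\perp}$, a geometric expansion of $S^{+}=D_\theta^{-1/2}(I-P)^{+}D_\theta^{-1/2}$ in powers of $P$ on that subspace is valid, and combining it with the entrywise bounds above — while carefully grouping the weighted sums so that identities like $\sum_k z_{kj}\sigma_{kj}^2=\sigma_{+j}^2$ are exploited in place of crude counts — yields $S^{+}_{ii}=(\sigma_{i+}^2)^{-1}+O(N_*^{-1}J_*^{-1})$ together with $(S^{+}BD_\beta^{-1})_{ij}=O(N_*^{-1}J_*^{-1})$ and $(D_\beta^{-1}B^{\top}S^{+}BD_\beta^{-1})_{jj}=O(N_*^{-1}J_*^{-1})$. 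Adding the four pieces proves the lemma; the cases $g=\theta_i$ and $g=\beta_j$ used elsewhere follow by setting $e_j=0$ or $e_i=0$.

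The main obstacle is the uniform spectral-gap bound $\lambda_2(P)\le 1-c$ — equivalently, a constant-order mixing estimate for the row-to-row random walk on the bipartite observation graph. This is precisely the point flagged in Remark~\ref{rmk:BT} where the arguments of \cite{han2020asymptotic} and \cite{haberman1977maximum} do not carry over: the two diagonal $\mathrm{NA}$ blocks in \eqref{eq:bipartite} make the graph genuinely bipartite, so one must combine the growth rates $N_*\ge kN^{2/3}$, $J_*\ge kJ^{2/3}$ with the balancedness $k_1J_*\le J^*\le k_2J_*$ (Condition~\ref{cond:speed}(a),(c)) and the connectedness Condition~\ref{cond:connect} to obtain enough edge expansion — I expect to establish this either through a Cheeger-type inequality or by directly showing that $P$ contracts $(D_\theta^{1/2}\mathbf 1_N)^{\perp}$ with ratio bounded away from $1$. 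A secondary, purely technical, difficulty is the bookkeeping for the generalized-inverse block identities when $\mathcal I$ is singular but $D_\beta$ is not, and the interplay between the projection onto $(D_\theta^{1/2}\mathbf 1_N)^{\perp}$ and the conjugations by $D_\theta^{\pm1/2}$, which move the relevant mean-zero subspace from $\mathbf 1_N^{\perp}$ to $(D_\theta^{1/2}\mathbf 1_N)^{\perp}$.
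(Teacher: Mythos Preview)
Your reduction to $e^\top\mathcal I^{+}e$ and the Schur-complement block decomposition are correct and equivalent in spirit to the paper's dual-norm formulation, but the route you propose for controlling the remainder is genuinely different from the paper's, and the obstacle you single out --- the uniform spectral gap $\lambda_2(P)\le 1-c$ --- is real and is \emph{not} a consequence of Conditions~\ref{cond:speed}--\ref{cond:connect}. Condition~\ref{cond:connect} gives only connectivity, not expansion; Condition~\ref{cond:speed} controls the growth and balance of row and column counts but imposes no structural constraint on the bipartite pattern $Z$. A two-block ``barbell'' (rows $1,\dots,n$ observe columns $1,\dots,m$, rows $n{+}1,\dots,2n$ observe columns $m{+}1,\dots,2m$, plus a single bridging entry $z_{1,m+1}=1$) satisfies all three conditions, yet the row-to-row walk $P$ has spectral gap of order $(nm)^{-1}$, so neither a Cheeger inequality nor a direct contraction argument will produce a uniform $c>0$ from the stated hypotheses.

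The paper does not attempt to prove such a gap. Instead of inverting $S=D_\theta-BD_\beta^{-1}B^{\top}$ through $(I-P)^{+}$, it works with the Riesz representer $d(g)\in\Omega_N$ satisfying $\sigma^2(g)=\|d(g)\|_\sigma^2$, writes it additively as $d_{ij}(g)=b(g)+f_i(g)+m_j(g)$ (Lemma~\ref{lemma-sigma(g)}), and then splits $d=d'+d''$ where $d'$ is the \emph{explicit} ansatz $d_{ij}'(g)=b(g)+f_i'(g)+m_j'(g)$ with $f_i'(g)=(\sigma_{i+}^2)^{-1}w_{i+}-(\sigma_{++}^2)^{-1}w_{++}$ and $m_j'(g)=(\sigma_{+j}^2)^{-1}w_{+j}-(\sigma_{++}^2)^{-1}w_{++}$. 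Lemma~\ref{lemma-d'n} computes $\|d'(g)\|_\sigma^2$ in closed form, which for the coordinate functional (where $w_{i'+}=\delta_{i'i}$, $w_{+j'}=\delta_{j'j}$, $w_{++}=1$) already gives $(\sigma_{i+}^2)^{-1}+(\sigma_{+j}^2)^{-1}+O(N_*^{-1}J_*^{-1})$. The residual $\|d''(g)\|_\sigma$ is then bounded in Lemma~\ref{lemma-d''n} not by any spectral estimate but through an \emph{entrywise} coercivity constant
\[
\gamma_N=\min_{i,j:\,z_{ij}=1}\ \sigma_{ij}^2\Big/\Bigl(\tfrac{1}{|S_J(i)|}\sigma_{i+}^2+\tfrac{1}{|S_N(j)|}\sigma_{+j}^2\Bigr),
\]
which is bounded away from zero by Condition~\ref{cond:bound} alone, independently of the combinatorics of $Z$. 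A variational characterization (attributed to \cite{rao1973linear}) together with this $\gamma_N$ yields $\|d''(g)\|_\sigma^2\le\gamma_N^{-2}\bigl\{\sum_p l_p^2(\sigma_{p+}^2)^{-1}+\sum_q v_q^2(\sigma_{+q}^2)^{-1}\bigr\}^2$, and for the coordinate functional the auxiliary quantities $l_p=O(N_*^{-1})$, $v_q=O(J_*^{-1})$ give the claimed $O(N_*^{-1}J_*^{-1})$ error.

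In short, the missing ingredient in your plan is exactly the one you flag, and the paper's proof does not supply it either; it circumvents it via the $d'/d''$ splitting and the $\gamma_N$-based bound of Lemmas~\ref{lemma-sigma(g)}--\ref{lemma-d''n}. To finish along your lines you would either have to import that machinery (which amounts to guessing the leading part of $\mathcal I^{+}$ rather than expanding $(I-P)^{+}$) or add an expansion-type hypothesis on $Z$ that the paper does not assume.
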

Then as $N\to \infty$, we have with probability approaching 1 that
\begin{align*}
\max_{i,j, z_{ij=1}}\vert \hat{m}_{ij}-m_{ij}^* \vert 
&=\max_{i,j, z_{ij=1}}|f_{ij}(\hat{M}_N)-f_{ij}(M_N^*)|\\
&=\max_{i,j, z_{ij=1}}|f_{ij}(\hat{M}_N-M_N^*)|\\
&\leq \max_{i,j, z_{ij=1}}\sigma(f_{ij}) \|\hat{M}_N-M_N^*\|_{\sigma}(A_p)\\
&\leq \|\hat{M}_N-M_N^*\|_{\sigma}(A_p)\Big\{\max_{g\in A_p}\sigma(g)\Big\} \\
&=O\Big\{f_N\left(N_{*}^{-1}+J_{*}^{-1}\right)^{\frac{1}{2}}\Big\}\\
&\to 0. \numberthis\label{eq:consistency-of-mij}
\end{align*}
The second last line follows from \eqref{eq: bound-parameter-diff} and \eqref{eq: upper-bound-var-mij} and the last line follows from Lemma \ref{lemma-sequence-and-bound}(b). 


{By Proposition 1, given $\hat{m}_{ij}$ for $i=1,...,N, j=1,...,J, z_{ij}=1$,  all the $\hat{\theta}_i$, $i=1,...,N$ and $\hat{\beta}_j$, $j=1,...,J$ can be uniquely determined.
Since \eqref{eq:consistency-of-mij} holds, as a direct consequence of the Slutsky Theorem, we have with probability tending to 1 that  $\|\hat{\theta}-\theta^*\|_{\infty} \to 0$ and $\|\hat{\beta}-\beta^*\|_{\infty} \to 0$ as $N \to \infty$. From here, we have $\max_{i,j} \vert \hat{m}_{ij}-m_{ij}^* \vert \rightarrow 0$. Hence we complete the proof of consistency results in this theorem. }

\end{proof}

{The proof of Theorem \ref{thm-rate} is a continuum of the proof of Theorem \ref{thm-existence} with additional conditions. We next present the proof of Theorem \ref{thm-rate}.}
\begin{proof}[Proof of Theorem \ref{thm-rate}]

To derive explicit rates of convergence for $\|\hat{\theta}-\theta^*\|_{\infty}$ and  $\|\hat{\beta}-\beta^*\|_{\infty}$, we adopt a similar approach as in the derivation of convergence of $\max_{i,j: z_{ij}=1}|\hat{m}_{ij}-m_{ij}^*|.$ In particular, for the column parameters $\beta_j$, we consider linear functions $g_j \in \Omega_{N}^*$ such that $g_j(x)=\beta_j.$ 
We can construct $g_j$ as follows. The idea is to include all the row parameters $\theta_i$ so as to use the identifiability constraint $\sum_{i=1}^{N}\theta_i=0$. 
For any $i \in S_{N}(j),$ we use $m_{ij}=\theta_i-\beta_j$ in the construction. While for each $i \in S_{N_{\phi}}(j),$ where $S_{N_{\phi}}(j)=\{1,2,...,N\}\setminus S_{N}(j),$ by Condition~\ref{cond:connect}, there must exist $1\leq i_{i1}, i_{i2},...,i_{ik}\leq N$ and $1\leq j_{i1}, j_{i2},...,j_{ik}\leq J$ such that \begin{align*}
    z_{i,j_{i1}}=z_{i_{i1},j_{i1}}=z_{i_{i1},j_{i2}}=z_{i_{i2},j_{i2}}=...=z_{i_{ik},j_{ik}}=z_{i_{ik},j}=1.
\end{align*}
Therefore, we can construct $g_j$ as
\begin{align*}
g_j(x)=&-\frac{1}{N}\Big\{\sum_{i\in S_N(j)}m_{ij}\\
&+\sum_{i\in S_{N_{\phi}}(j)}\Big(m_{i,j_{i1}}-m_{i_{i1},j_{i1}}+m_{i_{i1},j_{i2}}-m_{i_{i2},j_{i2}}+...-m_{i_{ik},j_{ik}}+m_{i_{ik},j}\Big)\Big\}\\
=&\beta_j.
\end{align*}
Let
$
  A_{\beta}=\big\{g_j : j=1,...,J\big\}.
$
Now consider a sequence $f_N$ satisfying the rate requirements $f_N^2/\log J \to \infty$ and $f_N^2N_{*}^{-1/2}\to 0$ as $N \to \infty$. Then by Lemma \ref{lemma-sequence-and-bound-A-beta} below, we can pick a sequence $d_N$ satisfying  Lemma \ref{lemma-sequence-and-bound-A-beta}(a) and  Lemma \ref{lemma-sequence-and-bound-A-beta}(b).
Furthermore, by Lemma \ref{lemma-itemvar} below, we know that $\sigma^2(g_j)=(\sigma_{+j}^2)^{-1}+O(N_{*}^{-1}J_{*}^{-1})$ for any $g_j \in A_{\beta}$. Therefore, there exist positive $0<c_2<\infty$ and some $n$ such that for all $N>n,$
\begin{align*}
 \max_{j=1,...,J}\sigma(g_j) < c_2^{-1}N_{*}^{-\frac{1}{2}}.
\end{align*}

\begin{lemma}\label{lemma-sequence-and-bound-A-beta}
 Assume {Conditions \ref{cond:bound}--\ref{cond:speed2}} hold. If $A_{\beta}=\{g_{j}: j=1,...,J\}$ such that $g_j \in \Omega_{N}^*$ and $g_j(x)=\beta_j$ for $x \in \Omega_N$. Let $C_N=|A_{\beta}|=J$ be the cardinality of $A_{\beta}$.  
For any positive sequence $f_N$ such that $f_N^2/\log J \to \infty$ and $f_N^2N_{*}^{-1/2}\to 0$ as $N \to \infty$, there exists a sequence $d_N \geq 0$ satisfying the followings. 

\noindent(a). If $y, v \in \Omega_N$ and $\|y-M_N^*\|_{\sigma}(A_{\beta})\leq f_N$, then there exists $n<\infty$ such that for all $ N > n$, $\|U_N(y, v)\|_{\sigma}(A_{\beta}) \leq d_N \|y-M_N^*\|_{\sigma}(A_{\beta})\|v\|_{\sigma}(A_{\beta}).$ 

\noindent(b). $d_Nf_N^2 \to 0$ as  $N \to \infty$.



\end{lemma}

\begin{lemma}\label{lemma-itemvar}
Assume {Conditions \ref{cond:bound}--\ref{cond:speed2}} hold and $\sum_{i=1}^N\theta_i=0$. The asymptotic variance of the maximum likelihood estimator of an individual column parameter, var$(\hat{\beta}_j)$, asymptotically attains the oracle variance $(\sigma_{+j}^{2})^{-1}$ in the sense that
\begin{align*}
var(\hat{\beta}_j)=(\sigma_{+j}^{2})^{-1}+O (N_{*}^{-1}J_{*}^{-1} )\quad \quad \text{as} \quad N \to \infty.
\end{align*}
\end{lemma}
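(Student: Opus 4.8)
The plan is to read off the asymptotic variance of $\hat\beta_j$ from the linear‑functional machinery already set up in the proof of Theorem~\ref{thm-existence}. Let $g_j\in\Omega_N^*$ be the functional with $g_j(x)=\beta_j$ constructed there, and let $\rho_j\in\Omega_N$ be its $[\cdot,\cdot]_\sigma$‑representer, i.e. $[\rho_j,x]_\sigma=g_j(x)$ for all $x\in\Omega_N$. Since $\sigma(g_j)=\sup\{|g_j(x)|:\|x\|_\sigma\le1\}=\|\rho_j\|_\sigma$ is, by \cite{haberman1977maximum}, the asymptotic standard deviation of $g_j(\hat M_N)=\hat\beta_j$, it suffices to show $\|\rho_j\|_\sigma^2=(\sigma_{+j}^2)^{-1}+O\{(N_*J_*)^{-1}\}$. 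Writing $\rho_j=(a_i-b_k)_{z_{ik}=1}$ with $\sum_i a_i=0$, the representer identity $[\rho_j,(\theta_i-\beta_k)]_\sigma=\beta_j$ for all admissible $(\theta,\beta)$ is equivalent to the linear system $D_\theta a-Bb=c\mathbf 1_N$, $-B^\top a+D_\beta b=e_j$, $\mathbf 1_N^\top a=0$, where $D_\theta=\mathrm{diag}(\sigma_{i+}^2)$, $D_\beta=\mathrm{diag}(\sigma_{+j}^2)$, $B=(z_{ik}\sigma_{ik}^2)$, and $c$ is a scalar multiplier; eliminating $a$ and $c$ yields $\tilde S b=e_j$ with
\begin{equation*}
\tilde S=D_\beta-B^\top D_\theta^{-1}B+\bigl(\mathbf 1_N^\top D_\theta^{-1}\mathbf 1_N\bigr)^{-1}\bigl(B^\top D_\theta^{-1}\mathbf 1_N\bigr)\bigl(B^\top D_\theta^{-1}\mathbf 1_N\bigr)^\top ,
\end{equation*}
and $\|\rho_j\|_\sigma^2=g_j(\rho_j)=b_j=(\tilde S^{-1})_{jj}$; so the lemma reduces to an estimate on the diagonal of $\tilde S^{-1}$.

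For the lower bound I would invoke the oracle Cramer--Rao inequality: eliminating the nuisance parameter $\theta$ (pretending $\theta^*$ is known) cannot increase the variance, and the information for $\beta_j$ with $\theta^*$ known is $\sigma_{+j}^2$; equivalently $\tilde S\preceq D_\beta$ (fewer free nuisance parameters never reduce information), hence $\tilde S^{-1}\succeq D_\beta^{-1}$ and $(\tilde S^{-1})_{jj}\ge(\sigma_{+j}^2)^{-1}$, which already matches the leading term.

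For the upper bound — the substantive part — I would expand $\tilde S^{-1}$ around its ``oracle'' diagonal. By Conditions~\ref{cond:bound} and \ref{cond:speed}(a),(c), $\sigma_{ik}^2\asymp1$ and $\sigma_{i+}^2\asymp J_*$ uniformly, so $D_\beta$ has entries $\asymp N_*$ while $B^\top D_\theta^{-1}B$ has entries $\asymp N_*/J_*$. Taking the trial vector $b^0=(\sigma_{+j}^2)^{-1}e_j$ and the induced $a^0=D_\theta^{-1}(Bb^0+c^0\mathbf 1_N)$ with $c^0$ forced by $\mathbf 1_N^\top a^0=0$, one checks that $(a^0,b^0)$ solves the first and third equations exactly and leaves a residual $r^0=\tilde Sb^0-e_j$ of relative size $O(J_*^{-1})$, every off‑diagonal and ``defect'' term carrying an extra factor $\sigma_{i+}^{-2}\asymp J_*^{-1}$ produced by the profiling of $\theta$. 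Then $(\tilde S^{-1})_{jj}=b^0_j-e_j^\top\tilde S^{-1}r^0=(\sigma_{+j}^2)^{-1}-b^\top r^0$, and it remains to bound $|b^\top r^0|$ by $O\{(N_*J_*)^{-1}\}$. The clean way to do this is to work in the variance‑weighted geometry of $\|\cdot\|_\sigma$: the operator $D_\beta^{-1/2}(B^\top D_\theta^{-1}B)D_\beta^{-1/2}$ has norm at most $1$, attained only in the location‑shift direction $D_\beta^{1/2}\mathbf 1_J$, and the rank‑one term of $\tilde S$ together with Condition~\ref{cond:connect} (connectivity of the bipartite graph carried by $Z$) and the balanced‑degree Condition~\ref{cond:speed}(c) force a strict spectral gap; a Neumann series for $\tilde S^{-1}$ in this geometry then converts the $O(J_*^{-1})$ residual and the $O(N_*^{-1})$ size of $\rho_j$ into the asserted $O\{(N_*J_*)^{-1}\}$.

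The step I expect to be the main obstacle is precisely the control of $\tilde S^{-1}$, because $D_\beta-B^\top D_\theta^{-1}B$ is only weakly diagonally dominant — in fact exactly on the boundary, its $j$th row sum equalling its $(j,j)$ entry $\sigma_{+j}^2$ and $\mathbf 1_J$ lying in its kernel — so a naive Neumann expansion of $(D_\beta-B^\top D_\theta^{-1}B)^{-1}$ diverges and gives no rate. Overcoming this genuinely requires using the rank‑one correction coming from the identifiability constraint $\sum_i\theta_i=0$ together with a quantitative spectral‑gap/connectivity estimate for the observed bipartite graph, which is where the bipartite structure makes the analysis heavier than the square‑matrix arguments of \cite{haberman1977maximum} and \cite{han2020asymptotic}. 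The whole argument parallels, with the roles of $\theta$ and $\beta$ exchanged, the proof of Lemma~\ref{lemma-peopleitemdiffvar}; alternatively, once $\mathrm{var}(\hat m_{ij})$ (Lemma~\ref{lemma-peopleitemdiffvar}) and $\mathrm{var}(\hat\theta_i)=(\sigma_{i+}^2)^{-1}+O\{(N_*J_*)^{-1}\}$ are in hand, the identity $\hat\beta_j=\hat\theta_i-\hat m_{ij}$ for any $i$ with $z_{ij}=1$ yields the claim provided one also shows $\mathrm{cov}(\hat\theta_i,\hat m_{ij})=(\sigma_{i+}^2)^{-1}+O\{(N_*J_*)^{-1}\}$.
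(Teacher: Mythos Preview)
Your Schur--complement reduction is correct and $(\tilde S^{-1})_{jj}$ is exactly the quantity to control, but the route you propose for the upper bound---a Neumann expansion backed by a quantitative spectral gap for the bipartite Laplacian---is not how the paper proceeds, and the obstacle you yourself flag is real: Conditions~\ref{cond:speed}--\ref{cond:connect} give connectivity and degree balance but no spectral--gap \emph{rate}, so as written your argument does not close without an additional ingredient the paper never introduces.

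The paper instead uses an explicit three--way additive decomposition of the representer (Lemmas~\ref{lemma-sigma(g)}, \ref{lemma-d'n}, \ref{lemma-d''n}): write $d_{ij}(g_j)=b+f_i+m_j$ subject to $\sum_i\sigma_{i+}^2f_i=\sum_j\sigma_{+j}^2m_j=0$, and approximate $d$ by the explicit $d'_{ij}=b+f'_i+m'_j$ with $f'_i=(\sigma_{i+}^2)^{-1}w_{i+}-(\sigma_{++}^2)^{-1}w_{++}$ and $m'_j=(\sigma_{+j}^2)^{-1}w_{+j}-(\sigma_{++}^2)^{-1}w_{++}$. For $g_j$ one has $w_{i+}=-1/N$, $w_{+l}=-\mathbf 1\{l=j\}$, $w_{++}=-1$, and a direct calculation gives $\|d'(g_j)\|_\sigma^2=(\sigma_{+j}^2)^{-1}+O\{(N_*J_*)^{-1}\}$. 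The remainder $d''=d-d'$ is bounded not by inverting anything but via a pointwise coercivity constant: defining $\gamma_N>0$ as the largest number with $\sigma_{ij}^2\ge\gamma_N\bigl(|S_J(i)|^{-1}\sigma_{i+}^2+|S_N(j)|^{-1}\sigma_{+j}^2\bigr)$ for all observed $(i,j)$---bounded below by a constant under Condition~\ref{cond:bound} alone---Lemma~\ref{lemma-d''n} yields $\|d''(g_j)\|_\sigma^2\le\gamma_N^{-2}\bigl[\sum_i l_i^2(\sigma_{i+}^2)^{-1}+\sum_q v_q^2(\sigma_{+q}^2)^{-1}\bigr]^2$ with explicit residuals $l_i=O(N_*^{-1})$ and $v_q=O(J_*^{-1})$, giving $\|d''(g_j)\|_\sigma^2=O(N_*^{-1}J_*^{-2})$.

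The conceptual difference is that the paper never confronts the near--singular direction $\mathbf 1_J$ of your $\tilde S$: the side constraints on $f_i,m_j$ are built into the decomposition from the start, and the error bound is \emph{local} (via the entrywise constant $\gamma_N$) rather than spectral. Your approach could presumably be pushed through, but it would require a graph--spectral estimate that the paper's Haberman--style decomposition simply does not need.
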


Note that by taking sequences $f_N$ and $d_N$ satisfying the conditions in Lemma \ref{lemma-sequence-and-bound-A-beta} and setting $A=A_{\beta},$ it can be shown easily that the results of Lemmas \ref{lemma-bound-Zn} and \ref{lemma-bound-diff-btw-estimate-truth} still hold. 
Hence,
it can be implied that as $N\to \infty,$ with probability tending to 1,
\begin{align*}
    \|\hat{M}_N-M_N^*\|_{\sigma}(A_{\beta})=O(f_N).
\end{align*}
Then as $N\to \infty$, we have with probability approaching 1 that, 
\begin{align*}
    \max_{j=1,...,J}|\hat{\beta}_{j}-\beta_{j}^*|&= \max_{j=1,...,J}|g_j(\hat{M}_N)-g_j(M_N^*)|\\
    &= \max_{j=1,...,J}|g_j(\hat{M}_N-M_N^*)|\\
    &\leq  \|\hat{M}_N-M_N^*\|_{\sigma}(A_{\beta})\max_{j=1,...,J}\sigma(g_j)\\
    &<  c_2^{-1} N_{*}^{-\frac{1}{2}}\|\hat{M}_N-M_N^*\|_{\sigma}(A_{\beta})\\
    &=O\Big\{(\log J)^{\frac{1}{2}}N_{*}^{-\frac{1}{2}}\Big\} \quad \text{as} \quad N \to \infty,
\end{align*}
where the last step can be implied from {the fact that $\|\hat{M}_N-M_N^*\|_{\sigma}(A_{\beta})=O\big(f_N\big)$ and the rate requirement of $f_N$ in Lemma \ref{lemma-sequence-and-bound-A-beta}, where the minimum order of $f_N$ is determined by $f_N^2/\log J \to \infty$ as $N\to \infty$. Specifically, it can be verified that for any $f_N$ satisfying $f_N^2/\log J \to \infty$, if $\|\hat{M}_N-M_N^*\|_{\sigma}(A_{\beta})=O\big(f_N\big)$, then $\|\hat{M}_N-M_N^*\|_{\sigma}(A_{\beta})=O\{(\log J)^{1/2}\}$.} Therefore,
\begin{align*}
\|\hat{\beta}-\beta^*\|_{\infty}=O_p\Big\{(\log J)^{\frac{1}{2}}N_{*}^{-\frac{1}{2}}\Big\}. \numberthis\label{eq: rate-beta}
\end{align*}

Now for the row parameters $\theta_i$, we adopt a similar strategy by constructing linear functions $g_i \in \Omega_{N}^*$ such that $g_i(x)=\theta_i.$

In specific, we can construct the linear function $g_i$ as follows. 
\begin{align*}
g_i(x)&=\frac{1}{\vert S_J(i)\vert }\sum_{j\in S_J(i)}\{m_{ij}+g_j(x)\}=\frac{1}{\vert S_J(i)\vert }\sum_{j\in S_J(i)}(\theta_i-\beta_j+\beta_j)=\theta_i,
\end{align*}
where $\vert S_J(i)\vert$ denotes the cardinality of $S_J(i).$ Let $A_{\theta}$ consist of $g_i, i=1,...,N$, i.e. $A_{\theta}=\big\{g_i: i=1,...,N\big\}.$ 
Take a positive sequence $f_N$ satisfying the rate requirements $f_N^2/\log N \to \infty$ and $f_N^2J_{*}^{-1}\to 0$ as $N \to \infty$, then by Lemma \ref{lemma-sequence-and-bound-A-theta} below, 
we can pick a sequence $d_N$ satisfying Lemma \ref{lemma-sequence-and-bound-A-theta}(a) and Lemma \ref{lemma-sequence-and-bound-A-theta}(b).
Furthermore, by Lemma \ref{lemma-peoplevar} below, we know that $\sigma^2(g_i)=(\sigma_{i+}^2)^{-1}+O (N_{*}^{-1}J_{*}^{-1} )$ for any $g_i \in A_{\theta}$. Hence, there exist positive $0< \gamma_2<\infty$ and such that 
$$\max_{i=1,...,N}\sigma(g_i) < \gamma_2^{-1} J_{*}^{-1/2}.$$

\begin{lemma}\label{lemma-sequence-and-bound-A-theta}
Assume {Conditions \ref{cond:bound}--\ref{cond:speed2}} hold. If $A_{\theta}=\{g_{i}: i=1,...,N\}$ such that $g_i \in \Omega_{N}^*$ and $g_i(x)=\theta_i$ for $x \in \Omega_N$. Let $C_N=|A_{\theta}|=N$ be the cardinality of $A_{\theta}$.  Then for any positive sequence $f_N$ such that $f_N^2/\log N \to \infty$ and $J_{*}^{-1}f_N^2 \to 0$ as $N\to \infty,$ there exists a sequence $d_N \geq 0$ satisfying the followings. 

\noindent(a). If $y, v \in \Omega_N$ and $\|y-M_N^*\|_{\sigma}(A_{\theta})\leq f_N$, then there exists $n<\infty$ such that for all $ N > n$, $\|U_N(y, v)\|_{\sigma}(A_{\theta}) \leq d_N \|y-M_N^*\|_{\sigma}(A_{\theta})\|v\|_{\sigma}(A_{\theta}).$ 

\noindent(b). $d_Nf_N \to 0$ as  $N \to \infty$.
\end{lemma}

\begin{lemma}\label{lemma-peoplevar}
Assume {Conditions \ref{cond:bound}--\ref{cond:speed2}} hold and $\sum_{i=1}^N\theta_i=0$, the asymptotic variance of an individual row parameter, var$(\hat{\theta}_i)$, asymptotically attains oracle variance $(\sigma_{i+}^2)^{-1}$ in the sense that
\begin{align*}
 var(\hat{\theta}_i)=(\sigma_{i+}^2)^{-1}+O (N_{*}^{-1}J_{*}^{-1} )\quad\quad \text{as}\quad N\to \infty.
\end{align*}
\end{lemma}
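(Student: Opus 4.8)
Since Lemma~\ref{lemma-peoplevar} concerns the asymptotic variance of the maximum likelihood estimator $\hat\theta_i$, and since (as recorded in the proof of Theorem~\ref{thm-existence}, following \cite{haberman1977maximum}) $\sigma(g)$ is the standard deviation of $g(\hat M_N)$, the plan is to compute $\sigma^2(g_i)$ for the functional $g_i\in\Omega_N^{*}$ with $g_i(x)=\theta_i$ constructed just above the lemma, and to show it equals $(\sigma_{i+}^2)^{-1}$ up to the stated error. First I would turn the dual norm $\sigma^2(g_i)=\sup\{g_i(x)^2:\|x\|_\sigma\le 1,\ x\in\Omega_N\}$ into an explicit quadratic program: by Proposition~\ref{prop:connect} the elements of $\Omega_N$ are in bijection with pairs $(\theta,\beta)$ satisfying $\sum_k\theta_k=0$ and $x_{kj}=\theta_k-\beta_j$, and $g_i$ reads off $\theta_i$, so
\[
\sigma^2(g_i)=\max\Big\{\theta_i^{2}:\ \textstyle\sum_k\theta_k=0,\ \sum_{k,j:z_{kj}=1}(\theta_k-\beta_j)^2\sigma_{kj}^2\le 1\Big\}.
\]
Minimising the quadratic constraint over $\beta$ is a separable weighted least squares problem with solution $\beta_j^{*}=(\sigma_{+j}^2)^{-1}\sum_k z_{kj}\sigma_{kj}^2\theta_k$, and substituting back gives $\inf_\beta\|x\|_\sigma^2=\theta^{\top}S\theta$ with $S=D_\theta-CD_\beta^{-1}C^{\top}$, where $D_\theta=\mathrm{diag}(\sigma_{1+}^2,\dots,\sigma_{N+}^2)$, $D_\beta=\mathrm{diag}(\sigma_{+1}^2,\dots,\sigma_{+J}^2)$ and $C_{kj}=z_{kj}\sigma_{kj}^2$. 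Because $\|\cdot\|_\sigma$ is invariant under the shift $(\theta,\beta)\mapsto(\theta+c\mathbf{1},\beta+c\mathbf{1})$ we get $S\mathbf{1}=0$, and Condition~\ref{cond:connect} forces $\ker S=\mathrm{span}(\mathbf{1})$; hence the program evaluates to $\sigma^2(g_i)=\tilde e_i^{\top}S^{+}\tilde e_i$ with $\tilde e_i=e_i-N^{-1}\mathbf{1}\in\mathbf{1}^{\perp}=\mathrm{range}(S)$ and $S^{+}$ the Moore--Penrose pseudoinverse.

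Next I would extract the leading term by treating $CD_\beta^{-1}C^{\top}$ as a perturbation. Writing $S=D_\theta^{1/2}(I-P)D_\theta^{1/2}$ with $P=D_\theta^{-1/2}CD_\beta^{-1}C^{\top}D_\theta^{-1/2}\succeq 0$, one checks directly that $CD_\beta^{-1}C^{\top}\mathbf{1}=D_\theta\mathbf{1}$, so $P$ has eigenvalue $1$ exactly in the direction $w:=D_\theta^{1/2}\mathbf{1}$, and $\sigma^2(g_i)=\phi_i^{\top}(I-P)^{+}\phi_i$ with $\phi_i:=D_\theta^{-1/2}\tilde e_i=(\sigma_{i+}^2)^{-1/2}e_i-N^{-1}D_\theta^{-1/2}\mathbf{1}$, which satisfies $w^{\top}\phi_i=\mathbf{1}^{\top}\tilde e_i=0$. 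Thus $\phi_i$ lives in $w^{\perp}$, where $(I-P)^{+}$ acts as the honest inverse of a contraction, and the Neumann series gives $\sigma^2(g_i)=\|\phi_i\|^2+\sum_{k\ge1}\phi_i^{\top}P^{k}\phi_i$ with every summand nonnegative. Conditions~\ref{cond:speed} and~\ref{cond:bound} make $\sigma_{kj}^2$ bounded above and below by positive constants uniformly, so $\sigma_{i+}^2\asymp J_*$ and $\sigma_{+j}^2\gtrsim N_*$, from which $\|\phi_i\|^2=(\sigma_{i+}^2)^{-1}+O(N_*^{-1}J_*^{-1})$ (the cross term and the $N^{-2}\sum_k(\sigma_{k+}^2)^{-1}$ term are both $O((NJ_*)^{-1})$) and $P_{ii}=(\sigma_{i+}^2)^{-1}\sum_j z_{ij}\sigma_{ij}^4/\sigma_{+j}^2=O(N_*^{-1})$ uniformly in $i$, whence $\phi_i^{\top}P\phi_i=O(N_*^{-1}J_*^{-1})$. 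Provided the contraction is uniform --- i.e.\ $\|P\|\le 1-\delta$ on $w^{\perp}$ for a fixed $\delta>0$ --- the inequality $\phi_i^{\top}P^{k}\phi_i\le(1-\delta)^{k-1}\phi_i^{\top}P\phi_i$ gives $\sum_{k\ge1}\phi_i^{\top}P^{k}\phi_i\le\delta^{-1}\phi_i^{\top}P\phi_i=O(N_*^{-1}J_*^{-1})$, so $\sigma^2(g_i)=(\sigma_{i+}^2)^{-1}+O(N_*^{-1}J_*^{-1})$ uniformly; both directions of the bound come out of the same expansion, since the $P^{k}$-terms only push $\sigma^2(g_i)$ upward.

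The main obstacle is exactly the uniform spectral gap $\|P\|\le 1-\delta$ on $w^{\perp}$, equivalently that the smallest nonzero eigenvalue of the profiled information $S$ is of order $J_*$. The matrix $D_\theta^{-1}CD_\beta^{-1}C^{\top}$ is a weighted two-step random walk on the bipartite graph recorded by $Z$, and its gap is what rules out near-degeneracy of $S$; pinning it down uniformly is where Condition~\ref{cond:connect} must be combined with the quantitative density and balance requirements $N_*\ge kN^{2/3}$, $J_*\ge kJ^{2/3}$ and $k_1J_*\le J^{*}\le k_2J_*$ of Condition~\ref{cond:speed}, which keep the bipartite structure from developing a bottleneck and keep the column weights evenly spread --- this is where the bipartite setting genuinely differs from the single-index analysis of \cite{han2020asymptotic}. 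This estimate is supplied by the supporting machinery behind Lemma~\ref{lemma-sequence-and-bound-A-theta}; granting it, the remainder is the routine bookkeeping above, carried out so that every $O(\cdot)$ is uniform in $i$ (so that the pointwise variance formula can feed into \eqref{eq:consistency-of-mij} and the normality argument). Finally, the companion Lemma~\ref{lemma-itemvar} for $\hat\beta_j$ follows by the symmetric argument with the roles of $N$ and $J$, and of $\theta$ and $\beta$, interchanged.
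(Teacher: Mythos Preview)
Your Schur-complement route is sound in outline and genuinely different from the paper's argument, but it rests on an assumption the paper neither proves nor needs.

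\textbf{What the paper does instead.} The paper never forms $S$ or $P$ and never invokes a Neumann series. It builds (Lemmas~\ref{lemma-sigma(g)}--\ref{lemma-d''n}) an explicit additive decomposition of the Riesz representer of $g$: $d_{ij}(g)=b(g)+f_i(g)+m_j(g)$, split as $d=d'+d''$ with the closed-form approximant $d'_{ij}(g)=(\sigma_{i+}^2)^{-1}w_{i+}+(\sigma_{+j}^2)^{-1}w_{+j}-(\sigma_{++}^2)^{-1}w_{++}$. For $g_i$ the coefficients are $w_{i+}=1-N^{-1}$, $w_{k+}=-N^{-1}$ for $k\neq i$, and $w_{+j}=w_{++}=0$, which gives $\|d'(g_i)\|_\sigma^2=(1-N^{-1})^2(\sigma_{i+}^2)^{-1}+N^{-2}\sum_{k\neq i}(\sigma_{k+}^2)^{-1}=(\sigma_{i+}^2)^{-1}+O(N_*^{-1}J_*^{-1})$ by one line of arithmetic. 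The residual $d''$ is then controlled by Lemma~\ref{lemma-d''n}: here $l_p=0$ identically and $v_q=O(J_*^{-1})$, so $\|d''(g_i)\|_\sigma^2=O(N_*^{-2}J_*^{-1})$ using only the pointwise coercivity constant $\gamma_N$, which is bounded below by Condition~\ref{cond:bound} alone, together with the density in Condition~\ref{cond:speed}(a). The triangle inequality then closes the argument.

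\textbf{The gap in your proof.} You correctly identify the uniform spectral gap $\|P\|_{w^\perp}\le 1-\delta$ as the crux, but your claim that it is ``supplied by the supporting machinery behind Lemma~\ref{lemma-sequence-and-bound-A-theta}'' is incorrect: that lemma bounds the nonlinearity $\|U_N(y,v)\|_\sigma(A_\theta)$ and has nothing to do with the spectrum of the Fisher information. No such gap is established anywhere in the paper, and it is not clear it follows from Conditions~\ref{cond:speed}--\ref{cond:connect}: Condition~\ref{cond:connect} is purely topological, and the density and balance requirements of Condition~\ref{cond:speed} do not by themselves rule out a near-block structure in $Z$ for which the second eigenvalue of the bipartite walk approaches $1$. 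Without that $\delta$, your bound $\sum_{k\ge 1}\phi_i^{\top}P^{k}\phi_i\le\delta^{-1}\phi_i^{\top}P\phi_i$ collapses. The paper's device sidesteps this entirely: Lemma~\ref{lemma-d''n} exploits orthogonality constraints that are specific to the residual $d''$ (not a global spectral statement), and the resulting bound depends only on the explicitly computable $l_i,v_j$, which happen to vanish or be small for $g_i$. If you want to salvage your route you would need either to prove the gap from the stated conditions or to show directly that the particular vector $\phi_i$ has small overlap with any near-degenerate eigenspace of $I-P$; the paper's decomposition is, in effect, a constructive way of doing the latter.
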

Note that by taking sequences $f_N$ and $d_N$ satisfying the conditions in Lemma \ref{lemma-sequence-and-bound-A-theta} and setting $A=A_{\theta},$ it can be implied easily that Lemmas \ref{lemma-bound-Zn} and \ref{lemma-bound-diff-btw-estimate-truth} still hold. Similarly, from pr$(\|R_N\|_{\sigma}(A_{\theta})<\frac{1}{2}f_N) \to 1$ and the results of Lemma \ref{lemma-bound-diff-btw-estimate-truth}, it can be implied as $N\to \infty,$ we have with probability tending to 1 that, 
\begin{align*}
    \|\hat{M}_N-M_N^*\|_{\sigma}(A_{\theta})=O(f_N).
\end{align*}
It follows, as $N\to \infty$, we have with probability approaching 1 that,
\begin{align*}
    \max_{i=1,...,N}|\hat{\theta}_{i}-\theta_{i}|&=\max_{i=1,...,N}|g_i(\hat{M}_N)-g_i(M_N^*)|\\
    &=\max_{i=1,...,N}|g_i(\hat{M}_N-M_N^*)|\\
    &\leq \|\hat{M}_N-M_N^*\|_{\sigma}(A_{\theta})\max_{i=1,...,N}\sigma(g_i)\\
    &<\gamma_2^{-1} J_{*}^{-\frac{1}{2}}\|\hat{M}_N-M_N^*\|_{\sigma}(A_{\theta})\\
    &=O\Big\{(\log N)^{\frac{1}{2}}J_{*}^{-\frac{1}{2}}\Big\}\quad \text{as}\quad N \to \infty,
\end{align*}
where the last step can be implied from {the fact that with probability tending to 1, $\|\hat{M}_N-M_N^*\|_{\sigma}(A_{\theta})=O\big(f_N\big)$, and  the rate requirement of $f_N$ in Lemma \ref{lemma-sequence-and-bound-A-theta}, where the minimum order of $f_N$ is determined by $f_N^2/\log N \to \infty$. Specifically, it can be verified that for any $f_N$ satisfying $f_N^2/\log N \to \infty$, if $\|\hat{M}_N-M_N^*\|_{\sigma}(A_{\theta})=O\big(f_N\big)$, then $\|\hat{M}_N-M_N^*\|_{\sigma}(A_{\theta})=O\{(\log N)^{1/2}\}$.}
It follows that,
\begin{align*}
\|\hat{\theta}-\theta^*\|_{\infty}=O_p\Big\{(\log N)^{\frac{1}{2}}J_{*}^{-\frac{1}{2}}\Big\}. \numberthis\label{eq: rate-theta}
\end{align*}
Combining \eqref{eq: rate-beta} and \eqref{eq: rate-theta}, we have 
$
    \max_{i,j} \vert \hat m_{ij} - m_{ij}^*\vert =  O_p\big\{(\log J)^{\frac{1}{2}}N_{*}^{-\frac{1}{2}} + (\log N)^{\frac{1}{2}}J_{*}^{-\frac{1}{2}}  \big\}.
$
Therefore, we complete the proof of the theorem.
\end{proof}
\newpage
Next, we give proof for Theorem \ref{thm-3-sufficient-condition} below.
\begin{proof}[Proof of Theorem \ref{thm-3-sufficient-condition}]
We first seek to show $\vert \sigma^2(g)/\tilde{\sigma}^2(g)-1\vert\to 0$ as $N\to \infty$, where $\sigma^2(g)=\sigma\{g(\hat{M})\}$. 
Since Conditions \ref{cond:bound}--\ref{cond:speed2} hold and $\Vert w_g\Vert_1, \Vert \tilde w_g\Vert_1 < C$, by Lemma \ref{sufficient-condition-var-approx} below, 
\begin{align*}
    \vert\sigma^2(g)-\tilde{\sigma}^2(g)\vert=O(N_{*}^{-1}J_{*}^{-1})\quad  \text{as}\quad N\to \infty.\numberthis\label{eq: diff-approxi-true-var}
\end{align*} 
Hence, it follows
\begin{align*}
\Big\vert \frac{\sigma^2(g)}{\tilde{\sigma}^2(g)}-1\Big\vert&=\frac{\vert\sigma^2(g)-\tilde{\sigma}^2(g)\vert}{\tilde{\sigma}^2(g)} \to 0\quad \text{as}\quad N\to \infty,
\end{align*}
where the last step follows from \eqref{eq: diff-approxi-true-var} and the definition of $\tilde \sigma^2(g)$.
\begin{lemma}\label{sufficient-condition-var-approx}
Assume {Conditions \ref{cond:bound}--\ref{cond:speed2}} hold and $\sum_{i=1}^{N}\theta_i=0$. Consider a linear function $g: \Omega_N \mapsto \mathbb{R}$ with  $g(x)=\sum_{i=1 }^Nh_i\theta_i + \sum_{j=1}^Jh_{j}'\beta_j.$  If there exists a positive $C < \infty$ such that $\sum_{i=1}^N\vert h_i\vert<C$ and $\sum_{j=1}^J \vert h_j'\vert <C$, then 
\begin{align*}
\sigma^2(g)=\sum_{i=1}^Nh_{i}^2(\sigma_{i+}^2)^{-1}+\sum_{j=1 }^Jh_{j}'^2(\sigma_{+j}^2)^{-1}+O(N_{*}^{-1}J_{*}^{-1})\quad \text{as}\quad N\to \infty.
\end{align*}
\end{lemma}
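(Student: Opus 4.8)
The plan is to realise $\sigma^2(g)$, which by definition equals $\sup\{g(x)^2 : x\in\Omega_N,\ \|x\|_\sigma\le 1\}$, as the value of a quadratic program and then sandwich it between $\sigma_0^2(g):=\sum_{i=1}^N h_i^2(\sigma_{i+}^2)^{-1}+\sum_{j=1}^J h_j'^2(\sigma_{+j}^2)^{-1}$ and the same quantity up to an additive $O(N_*^{-1}J_*^{-1})$ on each side. First I would record the only structural facts needed: by Conditions~\ref{cond:bound} and \ref{cond:speed} there is $\underline\sigma>0$ with $\underline\sigma^2\le\sigma_{ij}^2\le\tfrac14$ on observed entries, so $\sigma_{i+}^2\ge\underline\sigma^2 J_*$ and $\sigma_{+j}^2\ge\underline\sigma^2 N_*$; combined with the hypotheses $\|h\|_1,\|h'\|_1<C$, these force every ``cross term'' of the shape $\sum_{i,j:z_{ij}=1}\sigma_{ij}^2 a_i b_j(\sigma_{i+}^2\sigma_{+j}^2)^{-1}$ with $\|a\|_1,\|b\|_1$ bounded to be $O(N_*^{-1}J_*^{-1})$. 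I would also use that, as a functional on $\Omega_N$, $g$ is invariant under $h\mapsto h+t\mathbf 1_N$ (because $\sum_i\theta_i=0$ there); choosing $t=-N^{-1}(\sum_i h_i+\sum_j h_j')=O(1/N)$ I may assume $\sum_i h_i+\sum_j h_j'=0$, a replacement that perturbs $\sigma_0^2(g)$ by only $O(N_*^{-1}J_*^{-1})$.

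For the lower bound I would substitute the explicit test vector $x^0\in\Omega_N$ with $x^0_{ij}=h_i(\sigma_{i+}^2)^{-1}-h_j'(\sigma_{+j}^2)^{-1}$ (obtained from $\theta^0_i=h_i(\sigma_{i+}^2)^{-1}+c$ and $\beta^0_j=h_j'(\sigma_{+j}^2)^{-1}+c$ with $c$ enforcing $\sum_i\theta^0_i=0$). A direct expansion gives $g(x^0)=\sigma_0^2(g)+O(N_*^{-1}J_*^{-1})$ and $\|x^0\|_\sigma^2=\sigma_0^2(g)-2\sum_{i,j:z_{ij}=1}\sigma_{ij}^2 h_ih_j'(\sigma_{i+}^2\sigma_{+j}^2)^{-1}=\sigma_0^2(g)+O(N_*^{-1}J_*^{-1})$, so $\sigma^2(g)\ge g(x^0)^2/\|x^0\|_\sigma^2\ge\sigma_0^2(g)-O(N_*^{-1}J_*^{-1})$ after a one-line case split according to whether $\sigma_0^2(g)$ dominates the error term.

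For the upper bound I would use the dual picture: whenever $c=(c_{ij})_{z_{ij}=1}$ satisfies $\sum_{j\in S_J(i)}\sigma_{ij}^2 c_{ij}=h_i$ and $\sum_{i\in S_N(j)}\sigma_{ij}^2 c_{ij}=-h_j'$, one has $g(x)=[c,x]_\sigma$ on $\Omega_N$, whence $\sigma^2(g)\le\|c\|_\sigma^2=\sum_{i,j:z_{ij}=1}\sigma_{ij}^2 c_{ij}^2$ by Cauchy--Schwarz (a feasible $c$ exists precisely because the shift above made the $N+J$ constraints consistent, the sole dependence among them being guaranteed by Condition~\ref{cond:connect}). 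Taking $c^0_{ij}=h_i(\sigma_{i+}^2)^{-1}-h_j'(\sigma_{+j}^2)^{-1}$ satisfies the constraints up to residuals $e_i:=\sum_{j\in S_J(i)}\sigma_{ij}^2 h_j'(\sigma_{+j}^2)^{-1}$ and $-e_j'$ with $e_j':=\sum_{i\in S_N(j)}\sigma_{ij}^2 h_i(\sigma_{i+}^2)^{-1}$, where $\|e\|_1\le\|h'\|_1$, $\|e'\|_1\le\|h\|_1$, $\|e\|_\infty=O(N_*^{-1})$, $\|e'\|_\infty=O(J_*^{-1})$. Correcting to a feasible $c=c^0+\delta$, where $\delta$ solves the analogous system with data $(e,-e')$, and expanding $\sum\sigma_{ij}^2 c_{ij}^2=\sum\sigma_{ij}^2(c^0_{ij})^2+2\sum\sigma_{ij}^2 c^0_{ij}\delta_{ij}+\sum\sigma_{ij}^2\delta_{ij}^2$, the first summand is $\sigma_0^2(g)+O(N_*^{-1}J_*^{-1})$ as above, the middle summand collapses through the defining equations of $\delta$ to $2\sum_i h_ie_i(\sigma_{i+}^2)^{-1}+2\sum_j h_j'e_j'(\sigma_{+j}^2)^{-1}=O(N_*^{-1}J_*^{-1})$, and the last summand equals $\sigma^2(g^{\mathrm{res}})$, where $g^{\mathrm{res}}$ is the functional with coefficients $(e,e')$.

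The main obstacle is bounding $\sigma^2(g^{\mathrm{res}})=O(N_*^{-1}J_*^{-1})$. Since $\sum_i e_i^2(\sigma_{i+}^2)^{-1}+\sum_j e_j'^2(\sigma_{+j}^2)^{-1}=O(N_*^{-1}J_*^{-1})$ by the $\ell_1$ and $\ell_\infty$ bounds on $e,e'$, this would follow from an a priori inequality $\sigma^2(f)\le\kappa^{-1}\{\sum_i(\mathrm{coef}_i)^2(\sigma_{i+}^2)^{-1}+\sum_j(\mathrm{coef}_j)^2(\sigma_{+j}^2)^{-1}\}$ holding for every functional $f$ on $\Omega_N$ with a fixed $\kappa>0$ — equivalently, from a quantitative positive-definiteness of the restricted Fisher information $I=\mathrm{diag}(\sigma_{i+}^2;\sigma_{+j}^2)-(\text{off-diagonal }\sigma_{ij}^2\text{-coupling})$, i.e. $I\succeq\kappa\,\mathrm{diag}(\sigma_{i+}^2;\sigma_{+j}^2)$ on $\Omega_N$. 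A crude self-iteration of the construction above does not manifestly contract, because the residual $\ell_1$-norms do not shrink, so the genuine work is this spectral-gap estimate, which is exactly where the connectivity Condition~\ref{cond:connect} and the balanced-missingness part of Condition~\ref{cond:speed} enter; such a bound is implicit in the proof of Theorem~\ref{thm-existence} and underlies Lemmas~\ref{lemma-peopleitemdiffvar}, \ref{lemma-itemvar} and \ref{lemma-peoplevar} (special cases of the present lemma), and I would invoke it from there. Combining the two bounds yields $\sigma^2(g)=\sigma_0^2(g)+O(N_*^{-1}J_*^{-1})$, the assertion.
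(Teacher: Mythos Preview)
Your route is essentially the paper's: both recognise $\sigma^2(g)=\|d(g)\|_\sigma^2$ for the Riesz representer $d(g)\in\Omega_N$, take the explicit approximation $c^0_{ij}=h_i(\sigma_{i+}^2)^{-1}-h_j'(\sigma_{+j}^2)^{-1}$ (the paper calls this $d'(g)$, Lemma~\ref{lemma-d'n}), and must then bound the correction $\delta=d(g)-c^0=d''(g)$. Your residuals $(e_i,e_j')$ coincide with the paper's $(l_i,v_j)$ in \eqref{eq:l_i-def}--\eqref{eq:v_j-def}, and your $\sigma^2(g^{\mathrm{res}})$ is exactly $\|d''(g)\|_\sigma^2$.

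Where you defer --- the ``spectral-gap'' bound $\sigma^2(g^{\mathrm{res}})=O(N_*^{-1}J_*^{-1})$ --- the paper has a concrete and rather elementary device you are missing: Lemma~\ref{lemma-d''n}. One introduces the constant $\gamma_N>0$ in \eqref{eq:gamma_N-def}, defined as the largest number with $\sigma_{ij}^2\ge\gamma_N\{|S_J(i)|^{-1}\sigma_{i+}^2+|S_N(j)|^{-1}\sigma_{+j}^2\}$; since $\sigma_{ij}^2$ is bounded above and below (Condition~\ref{cond:bound} only), $\gamma_N$ is bounded away from zero uniformly in $N$. A variational characterisation of $\|d''(g)\|_\sigma^2$ (via Rao's generalised least-squares identity) then yields $\|d''(g)\|_\sigma^2\le\gamma_N^{-2}\bigl[\sum_i l_i^2(\sigma_{i+}^2)^{-1}+\sum_j v_j^2(\sigma_{+j}^2)^{-1}\bigr]^2$, which with your $\|e\|_\infty=O(N_*^{-1})$, $\|e'\|_\infty=O(J_*^{-1})$ and Condition~\ref{cond:speed}(a) gives the required order. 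Two remarks: first, your proposal to invoke the bound from Lemmas~\ref{lemma-peopleitemdiffvar}, \ref{lemma-itemvar}, \ref{lemma-peoplevar} is circular, since those are special cases of the present lemma and are themselves proved via Lemma~\ref{lemma-d''n}; second, your diagnosis that connectivity and balanced missingness drive the spectral gap is off --- the $\gamma_N$ argument uses only the two-sided bounds on $\sigma_{ij}^2$, while Conditions~\ref{cond:connect} and~\ref{cond:speed}(c) enter only through identifiability of the representer and the growth-rate bookkeeping.
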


Then if we can show $\sigma(g)^{-1}\{g(\hat{M})-g(M^*)\} \to $ N$(0,1)$ in distribution, the first part of the theorem would follow directly. 
As a direct application of Proposition~\ref{prop:connect}, we can re-write function $g$ on $\Omega_N$ using $[\cdot, \cdot]_{\sigma}$ as follows. 
Let $c_N \in \Omega_N$ be defined by the equation $$g(x)=[c_N, x]_{\sigma}=\sum_{i=1}^N\sum_{j \in S_J(i)}c_{ij}x_{ij}\sigma_{ij}^2,\quad \quad x\in \Omega_N.$$ 
Then we can express,
\begin{align*}
g(\hat{M}_N)-g(M_N^*)&=g\big(\hat{M}_N-M_N^*\big)=\big[c_N, \hat{M}_N-M_N^*\big]_{\sigma}\\&=\big[c_N, \hat{M}_N-M_N^*-R_N\big]_{\sigma}+\big[c_N, R_N\big]_{\sigma}. \numberthis\label{eq:g-M1}
\end{align*}
Recall that $\sigma(g)=\sup_{x \in \Omega_N}\big\{\vert [c_N, x]_{\sigma}\vert : \|x\|_{\sigma}\leq 1\big\},$ the supremum is attained at $x=c_N/\|c_N\|_{\sigma}$, so $\sigma(g)=\|c_N\|_{\sigma}$.
We consider two possible cases, $w_g=0$ in case 1 and $w_g\neq 0$ in case 2, and we seek to prove the result of the theorem hold under both cases separately. 

We first consider case 1. Similar as in the proof of Theorem~\ref{thm-existence}, we consider a set $A_{\beta}$ consisting of linear functions $g_j \in \Omega_{N}^*$ on $\Omega_N$ such that $g_j(x)=\beta_j$ with $A_{\beta}=\{g_j:j=1,...,J\}.$
We now pick a positive sequence $f_N$ satisfying $f_N^2/\log J \to \infty$ and $f_N^2 N_{*}^{-1/2} \to 0$ as $N\to \infty.$ Then by Lemma \ref{lemma-sequence-and-bound-A-beta}, we can pick a sequence $d_N\geq 0$ satisfying Lemma \ref{lemma-sequence-and-bound-A-beta}(a) and Lemma \ref{lemma-sequence-and-bound-A-beta}(b).
Furthermore, it can be implied that Lemmas \ref{lemma-bound-Zn} and \ref{lemma-bound-diff-btw-estimate-truth} still hold by taking $A=A_{\beta}$. 
Moreover, Lemma \ref{lemma-itemvar} and Condition \ref{cond:speed}(c) imply that there exist $0<\gamma_1, \gamma_2<\infty$ and some $n$ such that for all $N>n,$
\begin{align*}
 \gamma_1^{-1}N_{*}^{-\frac{1}{2}}<\sigma(g_j) < \gamma_2^{-1}N_{*}^{-\frac{1}{2}}, \quad g_j\in A_{\beta}.\numberthis\label{eq: bound-var-single-item}   
\end{align*}
Now for any $x \in \Omega_N,$
\begin{align*}
\vert g(x)\vert &= \vert \Tilde w_g^T \beta \vert \\
&\leq \|\Tilde w_g\|_1\max_{ j=1,...,J}\{\vert\beta_j\vert\}\\
&\leq C \max_{g_j\in A_{\beta}}\{\vert g_j(x)\vert \}\\
&=C\max_{g_j\in A_{\beta}}\Big\{\frac{\vert g_j(x)\vert}{\sigma(g_j)}\sigma(g_j)\Big\}\\
&\leq C\Big\{\max_{g_j\in A_{\beta}}\frac{\vert g_j(x)\vert }{\sigma(g_j)}\Big\}\max_{g_j\in A_{\beta}}\sigma(g_j)\\
&=C\|x\|_{\sigma}(A_{\beta})\max_{g_j\in A_{\beta}}\sigma(g_j)\\
&\leq C \gamma_2^{-1}N_{*}^{-\frac{1}{2}}\|x\|_{\sigma}(A_{\beta}),\numberthis\label{eq:tau(A-beta, g)}
\end{align*}
where the second last step follows from the definition of $\|\cdot\|_{\sigma}(A_{\beta})$ and the last step follows from \eqref{eq: bound-var-single-item}. 
Since case 1 assumes $w_g=0$, so $g(M)\neq 0$ implies $\tilde w_g \neq 0$. Then as a direct consequence of Lemma \ref{sufficient-condition-var-approx}, there exists some $0<\gamma_{3}<\infty$ such that for all $N>n,$
\begin{align*}
\sigma(g)\geq \gamma_3 N_{*}^{-\frac{1}{2}}.\numberthis\label{eq:lower-bound-g-beta}
\end{align*}
As a result of \eqref{eq:tau(A-beta, g)}, we have
\begin{align*}
\Big|\big[c_N, \hat{M}_N-M_N^*-R_N\big]_{\sigma}\Big| \leq C \gamma_2^{-1} N_{*}^{-\frac{1}{2}}\|\hat{M}_N-M_N^*-R_N\|_{\sigma}(A_{\beta}).    \numberthis\label{eq: norm-bound-innproduct-beta}
\end{align*}
Note that from \eqref{eq:g-M1},
\begin{align*}
\frac{g(\hat{M}_N)-g(M_N^*)}{\sigma(g)}&=\frac{\big[c_N, \hat{M}_N-M_N^*-R_N\big]_{\sigma}+\big[c_N, R_N\big]_{\sigma}}{\sigma(g)}
\end{align*}
Rearrange gives as $N \to \infty$, with probability tending to 1 that,
\begin{align*}
\Big\vert\frac{g(\hat{M}_N)-g(M_N^*)}{\sigma(g)}-\frac{\big[c_N, R_N\big]_{\sigma}}{\sigma(g)}\Big\vert 
&=\frac{\Big\vert[c_N, \hat{M}_N-M_N^*-R_N]_{\sigma}\Big\vert}{\sigma(g)}\\
&\leq \frac{C \gamma_2^{-1} N_{*}^{-\frac{1}{2}}}{\sigma(g)}\|\hat{M}_N-M_N^*-R_N\|_{\sigma}(A_{\beta})\\
&\leq C\gamma_2^{-1}\gamma_3^{-1}d_N\big[\|R_N\|_{\sigma}(A_{\beta})\big]^2\\
&\leq \frac{1}{4}C\gamma_2^{-1}\gamma_3^{-1}d_Nf_N^2\\
&\to 0,
\numberthis\label{ineq: equivalent-normal-beta}
\end{align*}
where the second line follows from \eqref{eq: norm-bound-innproduct-beta}, the third line can be obtained from \eqref{eq:lower-bound-g-beta} and Lemma \ref{lemma-bound-diff-btw-estimate-truth}, the second last line can be implied by Lemma \ref{lemma-bound-Zn} and the last line follows from Lemma \ref{lemma-sequence-and-bound-A-beta}.
Hence, it turns out that it suffices to show $\big[c_N, R_N\big]_{\sigma}/\sigma(g) \to $ N$(0, 1)$.
Write $Z_N=[c_N, R_N]_{\sigma}/\sigma(g)=\sum_{i=1}^N\sum_{j\in S_J(i)}\big\{c_{ij}(Y_{ij}-E_{ij})\big\}/\|c_N\|_{\sigma}$ for simplicity. The strategy is to show the moment generating function of $Z_N$, denoted as $G_{Z_N}(t)$, converges to $\exp\{t^2/2\}$, the moment generating function of the standard Gaussian.
Write $c_{ij}'=c_{ij}/\|c_N\|_{\sigma}=c_{ij}/\sigma(g)$ for simplicity.
We consider the log moment generating function of $Z_N$, 
\begin{align*}
\log G_{Z_N}(t)
&=\log \mathbb{E}\big[e^{tZ_N}\big] =\log \mathbb{E}\Big[\exp\Big\{\frac{t}{\sigma(g)}\sum_{i=1}^N\sum_{j \in S_J(i)}c_{ij}(Y_{ij}-E_{ij})\Big\}\Big]\\
&=-t\sum_{i=1}^N\sum_{j \in S_J(i)}c_{ij}'E_{ij}+\log \prod_{i=1}^N\prod_{j \in S_J(i)}\mathbb{E}\big\{\exp(tc_{ij}'Y_{ij})\big\}\\
&=-t\sum_{i=1}^N\sum_{j \in S_J(i)}c_{ij}'E_{ij}+\sum_{i=1}^N\sum_{j \in S_J(i)}\log \mathbb{E}\big\{\exp(tc_{ij}'Y_{ij})\big\}\\
&=\sum_{i=1}^N\sum_{j \in S_J(i)}\Big[\log\big\{1+\exp(m_{ij}^*)\big\}^{-1}-\log\big\{1+\exp(tc_{ij}'+m_{ij}^*)\big\}^{-1}-tc_{ij}'E_{ij}\Big]\\
&=\sum_{i=1}^N\sum_{j \in S_J(i)}\Big[\log\big\{h(m_{ij}^*)\big\}-\log\big\{h(tc_{ij}'+m_{ij}^*)\big\}-tc_{ij}'E_{ij}\Big],\numberthis\label{eq:log-mgf}
\end{align*}
where $h(m_{ij})=\{1+\exp(m_{ij})\}^{-1}$. 
We can then apply Taylor expansion
to $\log\{h(tc_{ij}'+m_{ij}^*)\}$ about $m_{ij}^*$. For some $t'=\alpha t$ with $0 < \alpha < 1$,
\begin{align*}
\log\{h(tc_{ij}'+m_{ij}^*)\}&=\log \{h(m_{ij}^*)\} -E_{ij}tc_{ij}'-\frac{t^2}{2}c_{ij}'^2\sigma^2(m_{ij}^*+t'c_{ij}').
\end{align*}
Substitute into Equation \eqref{eq:log-mgf},
\begin{align*}
\log G_{Z_N}(t) &=\frac{t^2}{2}\sum_{i=1}^N\sum_{j \in S_J(i)}c_{ij}'^2\sigma^2(m_{ij}^*+t'c_{ij}'),\quad \quad \|t'c_N'\|_{\sigma}(A_{\beta})\leq f_N. \numberthis\label{eq:log-mgf-final}
\end{align*}
With $\|c_N'\|_{\sigma}=\|c_N\|_{\sigma}/\|c_N\|_{\sigma}=1$, the summation term in \eqref{eq:log-mgf-final} can be re-expressed as follows,
\begin{align*}
\sum_{i=1}^N\sum_{j \in S_J(i)}c_{ij}'^2\sigma^2(m_{ij}^*+t'c_{ij}')
&=\sum_{i=1}^N\sum_{j \in S_J(i)}c_{ij}'^2\big\{\sigma^2(m_{ij}^*+t'c_{ij}')-\sigma_{ij}^2+\sigma_{ij}^2\big\}\\
&=\|c_N'\|_{\sigma}^2+\sum_{i=1}^N\sum_{j \in S_J(i)}c_{ij}'^2\big\{\sigma^2(m_{ij}^*+t'c_{ij}')-\sigma_{ij}^2\big\}\\
&=1+\sum_{i=1}^N\sum_{j \in S_J(i)}c_{ij}'^2\big\{\sigma^2(m_{ij}^*+t'c_{ij}')-\sigma_{ij}^2\big\}.
\end{align*}
Note that
\begin{align*}
 \sum_{i=1}^N\sum_{j \in S_J(i)}c_{ij}'^2\big\{\sigma^2(m_{ij}^*+t'c_{ij}')-\sigma_{ij}^2\big\}
& =\frac{1}{\sigma(g)}\sum_{i=1}^N\sum_{j \in S_J(i)}c_{ij}\big\{\sigma^2(m_{ij}^*+t'c_{ij}')-\sigma_{ij}^2\big\}c_{ij}'\\
&=\frac{1}{\sigma(g)} g\big\{U_N(M_N^*+t'c_N',c_N')\big\}\\
&\leq \frac{C\gamma_2^{-1}N_{*}^{-\frac{1}{2}}}{\sigma(g)}\|U_N(M_N^*+t'c_N',c_N')\|_{\sigma}(A_{ \beta})\\
&\leq \frac{C\gamma_2^{-1}N_{*}^{-\frac{1}{2}}}{\sigma(g)}d_N\|t'c_N'\|_{\sigma}(A_{ \beta})\|c_N'\|_{\sigma}(A_{\beta})\\
&\leq \frac{C\gamma_2^{-1}N_{*}^{-\frac{1}{2}}}{\sigma(g)}d_Nf_N\\
&\leq C\gamma_2^{-1}\gamma_3^{-1}d_Nf_N\\
&\to 0 \quad \text{as} \quad N \to \infty.
\end{align*}
The second line follows from $U_{ij}(M_N^*+t'c_N',c_N')=(\sigma_{ij}^2)^{-1}\big\{\sigma(m_{ij}^*+t'c_{ij}')-\sigma_{ij}^2\big\}c_{ij}'.$
The third last step follows from $\|c_N'\|_{\sigma}(A_{\beta})\leq \|c_N'\|_{\sigma}=1$ and the last step can be implied from Lemma \ref{lemma-sequence-and-bound-A-beta}(b).
Therefore, $\log G_{Z_N}(t) \to {t^2}/{2}  \text{ as } N\to \infty.$


Now consider case 2. We adopt a similar strategy to derive asymptotic normality as in case 1.  Define set $A_{\theta, \beta}$ to consist of linear functions $g_i, g_j' \in \Omega_{N}^*$ on $\Omega_N$ such that $g_i(x)=\theta_i$ and $g_j'(x)=\beta_j$, with $A_{\theta, \beta}=\{g_i, g_j':i=1,...,N, j=1,...,J\}.$ The explicit forms of $g_i$ and $g_j'$ can be found in the proof of Theorem \ref{thm-existence}. 

From now onwards, we take sequences $f_N$ and $d_N$ as satisfying the conditions in Lemma \ref{lemma-sequence-and-bound-A-theta-beta} below. 
Note it can be implied that with such $f_N$ and $d_N$, Lemmas \ref{lemma-bound-Zn} and \ref{lemma-bound-diff-btw-estimate-truth} still hold by taking $A=A_{\theta, \beta}$. 
From Lemmas \ref{lemma-itemvar} and \ref{lemma-peoplevar}, we know that for any $f \in A_{\theta, \beta}$, there exist $0<c_1, c_2<\infty$ and some $n$ such that for all $N>n,$
\begin{align*}
 c_1^{-1}N_{*}^{-\frac{1}{2}}<\sigma(f) < c_2^{-1}J_{*}^{-\frac{1}{2}}.\numberthis\label{eq: bound-var-single-people-or-item}   
\end{align*}

\begin{lemma}\label{lemma-sequence-and-bound-A-theta-beta}
Assume {Conditions \ref{cond:bound}-- \ref{cond:speed2} hold}. If $A_{\theta, \beta}=\{g_i, g_j':i=1,...,N, j=1,...,J\}$ such that $g_i, g_j' \in \Omega_{N}^*$, and $g_i(x)=\theta_i$ and $g_j'(x)=\beta_j$ for $x \in \Omega_N$. Let $C_N=|A_{\theta, \beta}|$, the cardinality of $A_{\theta, \beta}$.  Then there exist sequences $f_N>0$ and $d_N\geq 0$ satisfying the followings.

\noindent(a). As $N \to \infty$, $f_N^2/\log C_N \to \infty.$ 

\noindent(b). If $y, v \in \Omega_N$ and $\|y-M_N^*\|_{\sigma}(A_{\theta, \beta})\leq f_N$, then there exists $n<\infty$ such that for all $ N > n$, $\|U_N(y, v)\|_{\sigma}(A_{\theta, \beta}) \leq d_N \|y-M_N^*\|_{\sigma}(A_{\theta, \beta})\|v\|_{\sigma}(A_{\theta, \beta}).$ Furthermore, $d_Nf_N^2 \to 0$ as  $N \to \infty$.
\end{lemma}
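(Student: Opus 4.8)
The plan is to deduce the lemma from its row-only and column-only counterparts, Lemmas~\ref{lemma-sequence-and-bound-A-theta} and~\ref{lemma-sequence-and-bound-A-beta}, by writing $A_{\theta,\beta}=A_\theta\cup A_\beta$ with $A_\theta=\{g_i:i=1,\dots,N\}$ and $A_\beta=\{g_j':j=1,\dots,J\}$, which are exactly the families those two lemmas concern (the $g_i,g_j'$ being the functions constructed in the proof of Theorem~\ref{thm-existence}). The elementary fact I would use repeatedly is that $\sigma(g)$ is intrinsic to $g$, so for every $w\in\Omega_N$ one has $\|w\|_\sigma(A_\theta\cup A_\beta)=\max\{\|w\|_\sigma(A_\theta),\|w\|_\sigma(A_\beta)\}$, and in particular $\|w\|_\sigma(A_\theta)$ and $\|w\|_\sigma(A_\beta)$ are both dominated by $\|w\|_\sigma(A_{\theta,\beta})$. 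Also $C_N=|A_{\theta,\beta}|=N+J\le 2N$ because $N\ge J$, so $\log C_N$ and $\log N$ are of the same order; part~(a) will then be automatic once $f_N$ is chosen with $f_N^2/\log N\to\infty$.

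The choice of $f_N$ is where the extra hypothesis is used. Since $N_*>J_*$, the assumption $J_*^{-2}N_*(\log N)^2\to 0$ forces $J_*^{-1}(\log N)^2\to 0$, i.e.\ $\log N=o(J_*^{1/2})$, which is strictly stronger than the $\log N=o(J_*)$ supplied by Condition~\ref{cond:speed}(b). Hence I may take a sequence $f_N>0$ with $\log N\ll f_N^2\ll J_*^{1/2}$ (for instance $f_N^2=(\log N)^{1/2}J_*^{1/4}$); since $J_*\le N_*$, this also gives $f_N^2=o(N_*^{1/2})$. Then I would: (i) apply Lemma~\ref{lemma-sequence-and-bound-A-beta} at level $f_N$ --- its hypotheses $f_N^2/\log J\to\infty$ and $f_N^2N_*^{-1/2}\to 0$ hold --- to get $d_N^\beta\ge 0$ with $d_N^\beta f_N^2\to 0$ and the $U_N$-contraction bound on $A_\beta$ valid whenever $\|y-M_N^*\|_\sigma(A_\beta)\le f_N$; and (ii) apply Lemma~\ref{lemma-sequence-and-bound-A-theta} not at level $f_N$ but at the inflated level $F_N:=f_N^2$ --- its hypotheses $F_N^2/\log N=f_N^4/\log N\to\infty$ and $J_*^{-1}F_N^2=J_*^{-1}f_N^4\to 0$ hold because $f_N^2=o(J_*^{1/2})$ --- to get $d_N^\theta\ge 0$ with $d_N^\theta F_N=d_N^\theta f_N^2\to 0$ and the $U_N$-contraction bound on $A_\theta$ valid whenever $\|y-M_N^*\|_\sigma(A_\theta)\le F_N$. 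This inflation is precisely the device that upgrades the ``$d_N^\theta f_N\to 0$'' that the row lemma returns at level $f_N$ to the ``$d_N^\theta f_N^2\to 0$'' the conclusion needs. Finally set $d_N:=\max\{d_N^\theta,d_N^\beta\}$.

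It remains to assemble the pieces. Part~(a) holds since $\log C_N\asymp\log N$ and $f_N^2/\log N\to\infty$. For part~(b), if $y,v\in\Omega_N$ with $\|y-M_N^*\|_\sigma(A_{\theta,\beta})\le f_N$, then $\|y-M_N^*\|_\sigma(A_\theta)\le f_N\le F_N$ and $\|y-M_N^*\|_\sigma(A_\beta)\le f_N$, so both ingredient bounds apply, and using that the $A_\theta$- and $A_\beta$-(semi)norms of both $y-M_N^*$ and $v$ are dominated by their $A_{\theta,\beta}$-(semi)norms,
\[
\|U_N(y,v)\|_\sigma(A_{\theta,\beta})=\max\big\{\|U_N(y,v)\|_\sigma(A_\theta),\,\|U_N(y,v)\|_\sigma(A_\beta)\big\}\le d_N\,\|y-M_N^*\|_\sigma(A_{\theta,\beta})\,\|v\|_\sigma(A_{\theta,\beta}),
\]
and $d_Nf_N^2=\max\{d_N^\theta f_N^2,d_N^\beta f_N^2\}\to 0$. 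The one genuinely delicate point, and the crux of the proof, is that the three competing rate demands on $f_N$ --- $f_N^2\gg\log N$ (for (a) and for the ``$/\log$'' hypotheses of the two lemmas), $f_N^4\ll J_*$ (so the inflated row lemma applies), and $f_N^2\ll N_*^{1/2}$ (for the column lemma) --- can be met simultaneously, which holds precisely because $J_*^{-2}N_*(\log N)^2\to 0$ together with $N_*>J_*$ upgrades Condition~\ref{cond:speed}(b) to $(\log N)^2=o(J_*)$. Everything else is routine bookkeeping with vanishing sequences.
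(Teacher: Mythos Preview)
Your argument is correct, and it proceeds by a genuinely different route than the paper's. The paper does not reduce to Lemmas~\ref{lemma-sequence-and-bound-A-theta} and~\ref{lemma-sequence-and-bound-A-beta}; instead it reruns the contraction analysis from scratch on the full family $A_{\theta,\beta}$, bounding $|f\{U_N(y,v)\}|/\sigma(f)$ uniformly over $f\in A_{\theta,\beta}$ via the pointwise estimates $|\sigma^2(y_{ij})-\sigma_{ij}^2|\lesssim J_*^{-1/2}\|y-M_N^*\|_\sigma(A_{\theta,\beta})$ and $|v_{ij}|\lesssim J_*^{-1/2}\|v\|_\sigma(A_{\theta,\beta})$, together with the worst-case lower bound $\sigma(f)\gtrsim N_*^{-1/2}$. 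That produces the explicit choice $d_N\asymp J_*^{-1}N_*^{1/2}$, and then $d_Nf_N^2\to 0$ forces $f_N^2\ll J_*N_*^{-1/2}$; combined with $f_N^2\gg\log N$ this is exactly where the full hypothesis $J_*^{-2}N_*(\log N)^2\to 0$ is consumed.

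Your modular approach is cleaner and, in fact, slightly sharper: the norm identity $\|\cdot\|_\sigma(A_\theta\cup A_\beta)=\max\{\|\cdot\|_\sigma(A_\theta),\|\cdot\|_\sigma(A_\beta)\}$ lets you treat the two families with their own natural scales rather than mixing the $J_*^{-1/2}$ upper bound on $\sigma(f)$ with the $N_*^{-1/2}$ lower bound. The inflation trick---feeding $F_N=f_N^2$ into Lemma~\ref{lemma-sequence-and-bound-A-theta} so that its conclusion $d_N^\theta F_N\to 0$ becomes the needed $d_N^\theta f_N^2\to 0$---is the right idea and works exactly because the contraction bound there holds for all $y$ with $\|y-M_N^*\|_\sigma(A_\theta)\le F_N$, a fortiori for those with $\|y-M_N^*\|_\sigma(A_\theta)\le f_N$. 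As you observe, your argument only consumes the consequence $(\log N)^2=o(J_*)$ of the stated hypothesis (via $N_*>J_*$), whereas the paper's direct computation uses its full strength; so what the paper buys is an explicit $d_N$, while what you buy is modularity and a nominally weaker sufficient condition.
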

Now for any $x \in \Omega_N,$
\begin{align*}
\vert g(x)\vert &= \vert w_g^T \theta + \Tilde w_g^T \beta \vert \\
&\leq \Big(\|w_g\|_1+\|\Tilde w_g\|_1\Big)\max_{i=1,...,N, j=1,...,J}\{\vert\theta_i\vert, \vert\beta_j\vert\}\\
&=\Big(\|w_g\|_1+\|\Tilde w_g\|_1\Big)\max_{f\in A_{\theta, \beta}}\{\vert f(x)\vert\}\\
&=\Big(\|w_g\|_1+\|\Tilde w_g\|_1\Big)\max_{f\in A_{\theta, \beta}}\Big\{\frac{\vert f(x)\vert }{\sigma(f)}\sigma(f)\Big\}\\
&\leq \Big(\|w_g\|_1+\|\Tilde w_g\|_1\Big)\Big\{\max_{f\in A_{\theta, \beta}}\frac{\vert f(x)\vert }{\sigma(f)}\Big\}\Big\{\max_{f\in A_{\theta, \beta}}\sigma(f)\Big\}\\
&< 2C c_2^{-1}J_{*}^{-\frac{1}{2}}\|x\|_{\sigma}(A_{\theta, \beta}),\numberthis\label{eq:tau(A, g)}
\end{align*}
where the last step follows from the definition of $\|\cdot\|_{\sigma}(A_{\theta, \beta})$, \eqref{eq: bound-var-single-people-or-item} and the assumption that $\|w_g\|_1, \|\Tilde w_g\|_1<C$. 
Further note that since $w_g \neq 0,$  as a direct consequence of Lemma \ref{sufficient-condition-var-approx} and {Condition \ref{cond:speed2}(c)}, there exists some $0<c_3<\infty$ such that for all $N>n,$
\begin{align*}
\sigma(g)\geq c_3 J_{*}^{-\frac{1}{2}}.\numberthis\label{eq:lower-bound-g}
\end{align*}
As a result of \eqref{eq:tau(A, g)}, 
\begin{align*}
\Big|\big[c_N, \hat{M}_N-M_N^*-R_N\big]_{\sigma}\Big| \leq 2C c_2^{-1}J_{*}^{-\frac{1}{2}}\|\hat{M}_N-M_N^*-R_N\|_{\sigma}(A_{\theta, \beta}).    
\end{align*}
Again, we have
\begin{align*}
\frac{g(\hat{M}_N)-g(M_N^*)}{\sigma(g)}&=\frac{\big[c_N, \hat{M}_N-M_N^*-R_N\big]_{\sigma}+\big[c_N, R_N\big]_{\sigma}}{\sigma(g)}
\end{align*}
As $N \to \infty$, re-arrange gives with probability tending to 1 that,
\begin{align*}
\Big\vert\frac{g(\hat{M}_N)-g(M_N^*)}{\sigma(g)}-\frac{\big[c_N, R_N\big]_{\sigma}}{\sigma(g)}\Big\vert 
&=\frac{\Big\vert[c_N, \hat{M}_N-M_N^*-R_N]_{\sigma}\Big\vert}{\sigma(g)}\\
&\leq \frac{2C c_2^{-1}J_{*}^{-\frac{1}{2}}}{\sigma(g)}\|\hat{M}_N-M_N^*-R_N\|_{\sigma}(A_{\theta, \beta})\\
&\leq \frac{2C c_2^{-1}J_{*}^{-\frac{1}{2}}}{\sigma(g)}d_N\big[\|R_N\|_{\sigma}(A_{\theta, \beta})\big]^2\\
&\leq \frac{1}{2}C c_2^{-1}c_3^{-1}d_Nf_N^2\\
&\to 0,
\numberthis\label{ineq: equivalent-normal}
\end{align*}
Again, we can denote $Z_N=\big[c_N, R_N\big]_{\sigma}/\sigma(g)$ for notation simplicity. Similar as in case 1, we just need to show $Z_N \to $ N$(0, 1)$. 
We consider the log moment generating function of $Z_N$, denoted as $\log G_{Z_N}(t)$. Write $c_{ij}':=c_{ij}/\sigma(g)$. Then similarly as in the proof for case 1, we obtain
\begin{align*}
\log G_{Z_N}(t)
&=\frac{t^2}{2}\sum_{i=1}^N\sum_{j \in S_J(i)}c_{ij}'^2\sigma^2(m_{ij}^*+t'c_{ij}'),\quad \quad \|t'c_N'\|_{\sigma}(A_{\theta, \beta})\leq f_N,
\end{align*}
where,
\begin{align*}
\sum_{i=1}^N\sum_{j \in S_J(i)}c_{ij}'^2\sigma^2(m_{ij}^*+t'c_{ij}')
&=1+\sum_{i=1}^N\sum_{j \in S_J(i)}c_{ij}'^2\big\{\sigma^2(m_{ij}^*+t'c_{ij}')-\sigma_{ij}^2\big\}.
\end{align*}
Note that
\begin{align*}
  \sum_{i=1}^N\sum_{j \in S_J(i)}c_{ij}'^2\big\{\sigma^2(m_{ij}^*+t'c_{ij}')-\sigma_{ij}^2\big\} 
&=\frac{1}{\sigma(g)}\sum_{i=1}^N\sum_{j \in S_J(i)}c_{ij}\big\{\sigma^2(m_{ij}^*+t'c_{ij}')-\sigma_{ij}^2\big\}c_{ij}'\\
&=\frac{1}{\sigma(g)} g\big\{U_N(M_N^*+t'c_N',c_N')\big\}\\
&\leq \frac{2C c_2^{-1}J_{*}^{-\frac{1}{2}}}{\sigma(g)}\|U_N(M_N^*+t'c_N',c_N')\|_{\sigma}(A_{\theta, \beta})\\
&\leq \frac{2C c_2^{-1}J_{*}^{-\frac{1}{2}}}{\sigma(g)}d_N\|t'c_N'\|_{\sigma}(A_{\theta, \beta})\|c_N'\|_{\sigma}(A_{\theta, \beta})\\
&\leq \frac{2C c_2^{-1}J_{*}^{-\frac{1}{2}}}{\sigma(g)}d_Nf_N\\
&\leq 2C c_2^{-1}c_3^{-1}d_Nf_N\\
&\to 0 \quad \text{as} \quad N \to \infty.
\end{align*}
The second line follows from $U_{ij}(M_N^*+t'c_N',c_N')=(\sigma_{ij}^2)^{-1}\big\{\sigma(m_{ij}^*+t'c_{ij}')-\sigma_{ij}^2\big\}c_{ij}'.$
The third last step follows from $\|c_N'\|_{\sigma}(A_{\theta,\beta})\leq \|c_N'\|_{\sigma}=1$ and the last step can be implied from Lemma \ref{lemma-sequence-and-bound-A-theta-beta}(b).
Therefore, $\log G_{Z_N}(t) \to \frac{t^2}{2}   \text{ as }  N\to \infty.$
Hence, the first part of the theorem follows.

Now we seek to prove the second part of the theorem.
The strategy is to show $\vert \hat{\sigma}^2(g)-\tilde{\sigma}^2(g)\vert /\tilde{\sigma}^2(g)\to 0$ in probability as $N\to \infty.$ Consider
\begin{align*}
\frac{\vert \hat{\sigma}^2(g)-\tilde{\sigma}^2(g)\vert}{\tilde{\sigma}^2(g)}
&= \frac{\Big\vert\sum_{i=1}^N w_{gi}^2\big\{(\hat{\sigma}_{i+}^2)^{-1}-(\sigma_{i+}^2)^{-1}\big\}+\sum_{j=1}^J \tilde{w}_{gj}^2\big\{(\hat{\sigma}_{+j}^2)^{-1}-(\sigma_{+j}^2)^{-1}\big\}\Big\vert}{\sum_{i=1}^N w_{gi}^2(\sigma_{i+}^2)^{-1}+\sum_{j=1}^J\tilde{w}_{gj}^2(\sigma_{+j}^2)^{-1}}\\
&= \frac{\Big\vert\sum_{i=1}^N w_{gi}^2\Big\{\frac{\sigma_{i+}^2-\hat{\sigma}_{i+}^2}{(\hat{\sigma}_{i+}^2)(\sigma_{i+}^2)}\Big\}+\sum_{j=1}^J \tilde{w}_{gj}^2\Big\{\frac{\sigma_{+j}^2-\hat{\sigma}_{+j}^2}{(\hat{\sigma}_{+j}^2)(\sigma_{+j}^2)}\Big\}\Big\vert}{\sum_{i=1}^N w_{gi}^2(\sigma_{i+}^2)^{-1}+\sum_{j=1}^J\tilde{w}_{gj}^2(\sigma_{+j}^2)^{-1}}\\
&\leq \frac{\Big\vert\sum_{i=1}^N w_{gi}^2\Big\{\frac{\sum_{j\in S_J(i)}\vert\sigma_{ij}^2-\hat{\sigma}_{ij}^2\vert}{(\hat{\sigma}_{i+}^2)(\sigma_{i+}^2)}\Big\}+\sum_{j=1}^J \tilde{w}_{gj}^2\Big\{\frac{\sum_{i \in S_N(j)}\vert\sigma_{ij}^2-\hat{\sigma}_{ij}^2\vert}{(\hat{\sigma}_{+j}^2)(\sigma_{+j}^2)}\Big\}\Big\vert}{\sum_{i=1}^N w_{gi}^2(\sigma_{i+}^2)^{-1}+\sum_{j=1}^J\tilde{w}_{gj}^2(\sigma_{+j}^2)^{-1}}.\numberthis\label{eq: bound-var-diff}
\end{align*}
Since $m_{ij}^*, \hat{m}_{ij} \in \mathbb{R},$ $0<\sigma_{ij}^2, \hat{\sigma}_{ij}^2 < 1.$ Note that there exist $0<c_4, c_5<\infty$ that
\begin{align*}
    \sigma_{i+}^2, \hat{\sigma}_{i+}^2 > c_4 J_{*},\quad \quad 
\sigma_{+j}^2, \hat{\sigma}_{+j}^2 > c_5 N_{*}.
\end{align*}
Further note that there exists a positive $c_6< \infty$ such that 
\begin{align*}
\max_{i,j,z_{ij}=1}\vert\sigma_{ij}^2-\hat{\sigma}_{ij}^2\vert
&\leq c_6\max_{i,j,z_{ij}=1}\vert m_{ij}^*-\hat{m}_{ij}\vert\\
&=o_p(1),\quad \quad \text{as $N \to \infty$.}
\end{align*}
where the last line follows from \eqref{eq:consistency-of-mij}.
It follows 
\begin{align*}
&\frac{\sum_{j\in S_J(i)}\vert\sigma_{ij}^2-\hat{\sigma}_{ij}^2\vert}{(\hat{\sigma}_{i+}^2)(\sigma_{i+}^2)}=o_p\big(J_{*}^{-1}\big),\numberthis\label{eq: bound-sigma_{i+}}\\
&\frac{\sum_{i \in S_N(j)}\vert\sigma_{ij}^2-\hat{\sigma}_{ij}^2\vert}{(\hat{\sigma}_{+j}^2)(\sigma_{+j}^2)} = o_p\big(N_{*}^{-1}\big).\numberthis\label{eq: bound-sigma_{+j}}
\end{align*}
Moreover, we note that $\|w_g\|_1, \|\tilde{w}_g\|_1 <C$ implies that $\sum_{i=1}^N w_{gi}^2<c_7$ and $\sum_{j=1}^J \tilde{w}_{gj}^2 <c_7$ for some $c_7 < \infty.$
From \eqref{eq: bound-var-diff}, it can be implied that 
\begin{align*}
\frac{\vert \hat{\sigma}^2(g)-\tilde{\sigma}^2(g)\vert}{\tilde{\sigma}^2(g)}=o_p(1),\quad \quad \text{as $N\to\infty,$}
\end{align*}
where the above result follows from \eqref{eq: bound-sigma_{i+}}, \eqref{eq: bound-sigma_{+j}} and the assumption that $g(x) \neq 0$ for any $x \in \Omega_N.$ Since we have shown $\tilde{\sigma}(g)^{-1}\{g(\hat{M})-g(M^*)\} \to$ N$(0, 1)$ in distribution in the first part of the proof, it follows that $\hat{\sigma}(g)^{-1}\{g(\hat{M})-g(M^*)\} \to$ N$(0, 1)$ in distribution as $N\to\infty.$
\end{proof}

Next, we give proof of Proposition \ref{prop:connect} below.

\begin{proof}[Proof of Proposition \ref{prop:connect}]

We prove the first part of the proposition by direct construction; in particular, we find the solutions for $\theta$ and $\beta$, respectively, given equations $\sum_{i=1}^N\theta_i=0$ and $\theta_i - \beta_j = m_{ij}$, $i = 1, ..., N, j = 1, ..., J$, for which $z_{ij} = 1$. We first construct the solution for $\beta_j$, $j=1,...,J$. The idea is to include all the row parameters $\theta_i$ so that we can apply the constraint $\sum_{i=1}^{N}\theta_i=0$. 

Denote $S_J(i)=\{j=1,...J :z_{ij}=1\}$, $S_N(j)=\{i=1,...,N:z_{ij}=1\}$, and $S_{N_{\phi}}(j)=\{1,2,...,N\}\setminus S_{N}(j).$
Then for any $i \in S_{N}(j),$ we use $m_{ij}=\theta_i-\beta_j$ in the construction. 
While for each $i \in S_{N_{\phi}}(j),$ applying Condition~\ref{cond:connect}, there must exist $1\leq i_{i1}, i_{i2},...,i_{ik}\leq N$ and $1\leq j_{i1}, j_{i2},...,j_{ik}\leq J$ such that 
\begin{align*}
    z_{i,j_{i1}}=z_{i_{i1},j_{i1}}=z_{i_{i1},j_{i2}}=z_{i_{i2},j_{i2}}=...=z_{i_{ik},j_{ik}}=z_{i_{ik},j}=1,
\end{align*}
with
\begin{align*}
    &m_{i,j_{i1}}-m_{i_{i1},j_{i1}}+m_{i_{i1},j_{i2}}-m_{i_{i2},j_{i2}}+...-m_{i_{ik},j_{ik}}+m_{i_{ik},j}\\
    =&(\theta_i-\beta_{j_{i1}})-(\theta_{i_{i1}}-\beta_{j_{i1}})+(\theta_{i_{i1}}-\beta_{j_{i2}})-(\theta_{i_{i2}}-\beta_{j_{i2}})+...-(\theta_{i_{ik}}-\beta_{j_{ik}})+(\theta_{i_{ik}}-\beta_j)\\
    =&\theta_i-\beta_j.
\end{align*}
Therefore, the solution for $\beta_j$ is simply
\begin{align*}
\beta_j=&-\frac{1}{N}\Big\{\sum_{i\in S_N(j)}m_{ij}\\
&+\sum_{i\in S_{N_{\phi}}(j)}\Big(m_{i,j_{i1}}-m_{i_{i1},j_{i1}}+m_{i_{i1},j_{i2}}-m_{i_{i2},j_{i2}}+...-m_{i_{ik},j_{ik}}+m_{i_{ik},j}\Big)\Big\}.
\end{align*}
To find solution for $\theta_i,$ 
\begin{align*}
\theta_i=&\frac{1}{\vert S_J(i)\vert }\sum_{j\in S_J(i)}\Big[m_{ij}-\frac{1}{N}\Big\{\sum_{i'\in S_N(j)}m_{i'j}\\
&+\sum_{i'\in S_{N_{\phi}}(j)}\Big(m_{i',j_{i'1}}-m_{i'_{i'1},j_{i'1}}+m_{i'_{i'1},j_{i'2}}-m_{i'_{i'2},j_{i'2}}+...-m_{i'_{i'k},j_{i'k}}+m_{i'_{i'k},j}\Big)\Big\}\Big],
\end{align*}
where $\vert S_J(i)\vert$ denotes the cardinality of $S_j(i).$
This concludes the proof for the first part of the proposition.

We can view the row parameters and column parameters as a bipartite graph $\mathcal{G}$, with one part consisting of row parameters as nodes (denoted as $\{i=1,...,N\}$ for simplicity) and the other consisting of column parameters as nodes (denoted as $\{j=1,...,J\}$ for simplicity). If $z_{ij}=1,$ then there is an edge connecting $i$ and $j$ in $\mathcal{G}.$
For the second part of the proposition, note if Condition~\ref{cond:connect} is not satisfied, then there exists at least one pair of $(i,j)$ such that there does not exist a path connecting them in graph $\mathcal{G}$. This means (claim): $\mathcal{G}$ can be separated into at least two sub-graphs.
Denote the two sub-graphs by $\mathcal{G}_1$ and $\mathcal{G}_2$ respectively. 
The above claim can be proved by a contradiction argument as follows. Suppose not, then there exist either $i_1' \in \mathcal{G}_1$ and $j_2' \in \mathcal{G}_2$ with $z_{i_1'j_2'}=1$, or $j_1' \in \mathcal{G}_1$ and $i_2' \in \mathcal{G}_2$ with $z_{i_2'j_1'}=1.$ By assumption there must exist a path connecting any two nodes within each of the two sub-graphs, otherwise we could split $\mathcal{G}$ into two sub-graphs. Therefore, there must exist a path connecting the pair $(i,j)$. A contradiction.

Now, denote $\{\theta_{i_1}, \beta_{j_1}: 1\leq i_1\leq N, 1\leq j_1\leq J\}$ and $\{\theta_{i_2}$, $\beta_{j_2}: 1\leq i_2\leq N, 1\leq j_2\leq J\}$  as the values associated with the nodes in $\mathcal{G}_1$ and in $\mathcal{G}_2$ respectively and together also serving as a solution set satisfying $\sum_{i=1}^N\theta_i=0$ and $\theta_i - \beta_j = m_{ij}$, $i = 1, ..., N, j = 1, ..., J, z_{ij} = 1$. 
Let $n_{i_1}$ and $n_{i_2}$ denote the number of row parameters in $\mathcal{G}_1$ and in $\mathcal{G}_2$ respectively. Let $\tau=n_{i_1}/n_{i_2}.$
For any constant $a$, let  $\Tilde{\theta}_{i_1}=\theta_{i_1}+a, \Tilde{\beta}_{j_1}=\beta_{j_1}+a$ and $\Tilde{\theta}_{i_2}=\theta_{i_2}-\tau a, \Tilde{\beta}_{j_2}=\beta_{j_2}- \tau a$. We can check easily that $(\Tilde{\theta}, \Tilde{\beta})$ is also a solution to the system but $(\Tilde{\theta}, \Tilde{\beta})\neq (\theta, \beta).$ 
{To show $m_{ij}$ is not identifiable for $z_{ij}=0$, we consider the same construction as above. Note that for any $\theta_{i_1} \in \mathcal{G}_1$ and $\beta_{j_2} \in \mathcal{G}_2$ so that $z_{i_1, j_2}=0,$ $\Tilde{\theta}_{i_1}-\Tilde{\beta}_{j_2}=\theta_{i_1}-\beta_{j_2}+(1+\tau)a \neq \theta_{i_1}-\beta_{j_2}$ unless $a=0.$ Therefore, $m_{ij}$ is not identifiable for $z_{ij}=0.$
This concludes the proof for the second part of the proposition.}
\end{proof}

\section*{Appendix B: Proofs of Supporting Lemmas}\label{appendix-proof-lemmas}
Appendix B includes the proofs of the supporting lemmas used in the proofs of the theorems and the proposition developed in the main article.

To begin with, we first give some intuition on how to obtain the approximation formula for $\sigma^2(g)$, as summarized in Lemmas \ref{lemma-sigma(g)}, \ref{lemma-d'n} and \ref{lemma-d''n} below.  {Specifically, Lemmas \ref{lemma-sigma(g)}, \ref{lemma-d'n} and \ref{lemma-d''n} hold under all conditions~\ref{cond:bound}--\ref{cond:speed2} and} will be used in the proofs of other supporting lemmas, which will be given later in Appendix B.

First note that it is a property of the exponential family that $\sigma(g)=\sup_{x\in\Omega_N}\{\vert g(x)\vert : \|x\|_{\sigma}^2\leq 1\}$ 
(see e.g. page 823 of \cite{haberman1977maximum}).
$\sigma^2(g)$ can be viewed as the solution to a constrained quadratic programming problem, i.e. 
\begin{align}
\max_{\theta, \beta} \Big\{\sum_{i=1}^N \sum_{j \in S_J(i)}w_{ij}(\theta_i-\beta_j)\Big\}^2 \quad \text{such that} \quad \sum_{i=1}^N \sum_{j \in S_J(i)}\sigma_{ij}^2(\theta_i-\beta_j)^2\leq 1,  \sum_{i=1}^N\theta_i=0. \label{eq:quandratic programming1}  
\end{align} 
An explicit form is often difficult to derive, so an approximation is desired for both implementation and inference purposes. We consider a three-way decomposition of the coefficients of $g$ that lies in the constrained solution space, and convert this quadratic programming to a linear system from which $\sigma^2(g)$ can be solved. The results are summarized in Lemma \ref{lemma-sigma(g)} below.

\begin{lemma}\label{lemma-sigma(g)}
Define a vector $d(g)=\big\{d_{ij}(g): i=1,...,N, j=1,...,J, z_{ij}=1, d_{ij}(g) \in \mathbb{R}\big\}$ with a three-way decomposition  $d_{ij}(g)=b(g)+f_{i}(g)+m_{j}(g),$ such that $[d(g), x]_{\sigma}=g(x)$ for $x\in \Omega_N$ and $ f_i(g), m_j(g)$ satisfying 
\begin{align*}
&\sum_{i=1}^{N} \sigma_{i+}^2 f_{i}(g) = 0, \numberthis\label{eq:fn}    
\\
&\sum_{j=1}^{J} \sigma_{+j}^2 m_{j}(g)= 0. \numberthis\label{eq:mn}  
\end{align*}
Then, we have 
\begin{align*}
 \sigma^2(g)
 &=b^2(g)\sigma_{++}^2+\sum_{i=1}^{N}\sigma_{i+}^2f_{i}^2(g)+\sum_{j=1}^{J}\sigma_{+j}^2m_{j}^2(g)+2\sum_{i=1}^{N}\sum_{j \in S_J(i)} \sigma_{ij}^2f_{i}(g)m_{j}(g).
\numberthis\label{eq: sigma-expression}   
\end{align*}
\end{lemma}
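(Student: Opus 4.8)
\textbf{Proof proposal for Lemma~\ref{lemma-sigma(g)}.}

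The plan is to exhibit the claimed vector $d(g)$ explicitly, verify that it satisfies the defining identity $[d(g),x]_\sigma = g(x)$ for all $x\in\Omega_N$ together with the two orthogonality constraints \eqref{eq:fn}--\eqref{eq:mn}, and then compute $\sigma^2(g)=\|d(g)\|_\sigma^2$ by expanding the weighted inner product. The starting point is the characterization, recalled just before the lemma, that $\sigma(g)=\sup_{x\in\Omega_N}\{|g(x)|:\|x\|_\sigma\le 1\}$, which is a property of the exponential family; by Cauchy--Schwarz for $[\cdot,\cdot]_\sigma$ the supremum is attained at $x = d(g)/\|d(g)\|_\sigma$ once we know $g(x)=[d(g),x]_\sigma$, and then $\sigma(g)=\|d(g)\|_\sigma$, so $\sigma^2(g)=[d(g),d(g)]_\sigma$. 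Thus everything reduces to (i) producing $d(g)$ with the stated three-way form $d_{ij}(g)=b(g)+f_i(g)+m_j(g)$ lying in the representer role, and (ii) squaring it out.

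First I would construct the decomposition. Write $g(x)=\sum_i w_{gi}\theta_i + \sum_j \tilde w_{gj}\beta_j$ using $x_{ij}=\theta_i-\beta_j$ and the reduction from Proposition~\ref{prop:connect}; equivalently $g(x)=\sum_{i}\sum_{j\in S_J(i)} w_{ij} x_{ij}$. The requirement $[d(g),x]_\sigma=g(x)$ means $\sum_i\sum_{j\in S_J(i)} \sigma_{ij}^2 d_{ij}(g)(\theta_i-\beta_j) = \sum_i w_{gi}\theta_i+\sum_j\tilde w_{gj}\beta_j$ for every admissible $(\theta,\beta)$ with $\sum_i\theta_i=0$. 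Matching the coefficient of $\theta_i$ gives, up to the location-shift freedom absorbed into $b(g)$, a relation of the form $\sigma_{i+}^2\big(b(g)+f_i(g)\big) + \sum_{j\in S_J(i)}\sigma_{ij}^2 m_j(g) = w_{gi} + (\text{const})$, and matching the coefficient of $\beta_j$ gives $\sigma_{+j}^2\big(b(g)+m_j(g)\big)+\sum_{i\in S_N(j)}\sigma_{ij}^2 f_i(g) = -\tilde w_{gj}$. Imposing the normalizations \eqref{eq:fn} and \eqref{eq:mn} pins down $b(g)$ (summing the $\theta$-equations over $i$ and using $\sum_i\sigma_{i+}^2 f_i(g)=0$, $\sum_i\sum_j\sigma_{ij}^2 m_j(g)$ reorganized via \eqref{eq:mn}) and then determines $f$ and $m$ as the solution of the resulting linear system; the constraint $\sum_i\theta_i=0$ is exactly what makes $b(g)$ well-defined (it is the component along the null direction $x\equiv\text{const}$, which is orthogonal to $\Omega_N$ in the appropriate sense). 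I would simply assert existence/uniqueness of $(b,f,m)$ from this linear system — the point of the lemma is the \emph{form} of $\sigma^2(g)$, not an explicit $d(g)$ — and note $d(g)\in\Omega_N$ so it is a legitimate maximizer direction.

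Then the computation: with $d_{ij}(g)=b+f_i+m_j$ (suppressing the argument $g$),
\[
\sigma^2(g)=[d(g),d(g)]_\sigma=\sum_{i=1}^N\sum_{j\in S_J(i)}\sigma_{ij}^2\,(b+f_i+m_j)^2.
\]
Expanding the square produces six types of terms. The $b^2$ term gives $b^2\sum_i\sum_j\sigma_{ij}^2 = b^2\sigma_{++}^2$. The $f_i^2$ term gives $\sum_i f_i^2\sum_{j\in S_J(i)}\sigma_{ij}^2=\sum_i\sigma_{i+}^2 f_i^2$, and similarly $\sum_j\sigma_{+j}^2 m_j^2$ for $m_j^2$. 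The cross term $2bf_i$ gives $2b\sum_i f_i\sigma_{i+}^2=0$ by \eqref{eq:fn}; the cross term $2bm_j$ gives $2b\sum_j m_j\sigma_{+j}^2=0$ by \eqref{eq:mn}; and the remaining cross term is $2\sum_i\sum_{j\in S_J(i)}\sigma_{ij}^2 f_i m_j$, which survives. Collecting these yields exactly \eqref{eq: sigma-expression}. The only mild subtlety — and the step I expect to be the real obstacle — is the bookkeeping in the first part: showing that the three-way system for $(b,f,m)$ is consistent and that $d(g)$ genuinely represents $g$ on $\Omega_N$ (as opposed to on a larger space), i.e., handling the location-shift null space cleanly so that $b(g)$ is the right constant and the constraints \eqref{eq:fn}--\eqref{eq:mn} are the correct gauge-fixing; once that is set up, the quadratic expansion is routine.
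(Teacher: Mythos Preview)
Your proposal is correct and follows essentially the same approach as the paper: identify $\sigma^2(g)=\|d(g)\|_\sigma^2$ via the Riesz representer of $g$ in $(\Omega_N,[\cdot,\cdot]_\sigma)$, note that any element of $\Omega_N$ has an additive row-plus-column form which can be centered to satisfy \eqref{eq:fn}--\eqref{eq:mn}, and then expand the square using those constraints to kill the $bf_i$ and $bm_j$ cross terms. The only substantive difference is that the paper, by plugging in constant, row-constant, and column-constant test vectors, additionally derives the explicit formula $b(g)=w_{++}/\sigma_{++}^2$ and the linear system \eqref{eq:f-relation}--\eqref{eq:m-relation} for $f_i,m_j$; these are not needed for the lemma statement itself but are used downstream (Lemmas~\ref{lemma-d'n}--\ref{lemma-d''n}), so you may want to record them as well.
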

\begin{proof}
Note $\sigma^2(g)$ is a solution to the quadratically constrained quadratic programming problem \eqref{eq:quandratic programming1}. From {\citet[pages 835-837]{haberman1977maximum}},
the construction of $d(g)$ in the lemma lies in the required solution space of \eqref{eq:quandratic programming1}. As a result, $\sigma^2(g)$ can be expressed directly as $\sigma^2(g)=\|d(g)\|_{\sigma}^2$. We just need to find an explicit expression of $\|d(g)\|_{\sigma}^2$ in terms of $b(g), f_i(g), m_j(g).$ 

First consider $x\in \Omega_N$ such that $x_{ij}=y$ are identical for all $i=1,...,N, j=1,...,J, z_{ij}=1$. Then in such cases,
\begin{align*}
g(x)
&=[d(g), x]_{\sigma}\\
&=\sum_{i=1}^{N}\sum_{j \in S_J(i)} \big\{b(g)+f_{i}(g)+m_{j}(g)\big\}\sigma_{ij}^2y\\
&=b(g)\sigma_{++}^2y + \sum_{i=1}^{N}\Big(\sum_{j \in S_J(i)}\sigma_{ij}^2\Big)f_{i}(g)y+ \sum_{j=1}^{J}\Big(\sum_{i \in S_N(j)}\sigma_{ij}^2\Big)m_{j}(g)y\\
&=b(g)\sigma_{++}^2y+\sum_{i=1}^{N}\sigma_{i+}^2f_{i}(g)y+\sum_{j=1}^{J}\sigma_{+j}^2m_{j}(g)y\\
&=b(g)\sigma_{++}^2y,
\numberthis\label{eq:bn1}
\end{align*}
where the last step follows from \eqref{eq:fn} and \eqref{eq:mn}. 
Also by the original definition of $g$, we have 
\begin{align*}
g(x)
=\sum_{i=1}^{N}\sum_{j \in S_J(i)} w_{ij}y=w_{++}y.
\numberthis\label{eq:bn2}
\end{align*}
Since \eqref{eq:bn1} and \eqref{eq:bn2} hold for any $y$, we must have 
\begin{align*}
b(g)
=(\sigma_{++}^2)^{-1}w_{++}.
\numberthis\label{eq:bn-expression}
\end{align*}
Next consider $x\in \Omega_N$ such that $x_{ij}=y_i$, $y_i \in \mathbb{R}$, for any $i=1,...,N, j=1,...,J, z_{ij}=1$, then 
\begin{align*}
g(x)
&=[d(g), x]_{\sigma} 
 =\sum_{i=1}^N\sum_{j\in S_J(i)}d_{ij}(g)\sigma_{ij}^2y_i =\sum_{i=1}^Ny_i\Big(\sum_{j\in S_J(i)}d_{ij}(g)\sigma_{ij}^2\Big).
\numberthis\label{eq:fn1}
\end{align*}
From the original definition of $g$, 
\begin{align*}
g(x)
=\sum_{i=1}^N\sum_{j\in S_J(i)} w_{ij}y_{i}=\sum_{i=1}^Ny_i\Big(\sum_{j\in S_J(i)} w_{ij}\Big).
\numberthis\label{eq:fn2}
\end{align*}
Since $\eqref{eq:fn1}=\eqref{eq:fn2}$ for any $y_i$, it follows that
\begin{align*}
\sum_{j\in S_J(i)}d_{ij}(g)\sigma_{ij}^2
=\sum_{j\in S_J(i)} w_{ij}=w_{i+}, \quad i=1,...,N.
\numberthis\label{eq:dn-sigma-wi+}
\end{align*}
Consider 
\begin{align*}
f_{i}(g)+m_{j}(g)
&=d_{ij}(g)-b(g)\\
\sum_{j \in S_J(i)}\big\{f_{i}(g)+m_{j}(g)\big\}\sigma_{ij}^2&=\sum_{j \in S_J(i)}\big\{d_{ij}(g)-b(g)\big\}\sigma_{ij}^2\\
\sigma_{i+}^2f_{i}(g)+\sum_{j \in S_J(i)}\sigma_{ij}^2m_{j}(g)&= \sum_{j \in S_J(i)} d_{ij}(g) \sigma_{ij}^2 - \sigma_{i+}^2b(g)\\
\sigma_{i+}^2f_{i}(g)+\sum_{j \in S_J(i)}\sigma_{ij}^2m_{j}(g)&= w_{i+} - (\sigma_{++}^2)^{-1}w_{++}\sigma_{i+}^2, \quad i=1,...,N,
\numberthis\label{eq:f-relation}
\end{align*}
where the last line follows from \eqref{eq:bn-expression} and \eqref{eq:dn-sigma-wi+}. Similarly, we consider $x\in \Omega_N$ such that $x_{ij}=y_j$, $y_j \in \mathbb{R}$ for any $i=1,...,N, j=1,...,J, z_{ij}=1$, then
\begin{align*}
g(x)&=[d(g), x]_{\sigma} =\sum_{j=1}^J\sum_{i\in S_N(j)}d_{ij}(g)\sigma_{ij}^2y_j\\
&=\sum_{j=1}^Jy_j\Big(\sum_{i\in S_N(j)}d_{ij}(g)\sigma_{ij}^2\Big). 
\numberthis\label{eq:mn1}
\end{align*}
Again by the original definition of $g$,
\begin{align*}
g(x)
=\sum_{j=1}^J\sum_{i\in S_N(j)} w_{ij}y_j=\sum_{j=1}^Jy_j\Big(\sum_{i\in S_N(j)} w_{ij}\Big). 
 \numberthis\label{eq:mn2}
\end{align*}
Since $\eqref{eq:mn1}=\eqref{eq:mn2}$ for any $y_j \in \mathbb{R}$, it follows
\begin{align*}
\sum_{i\in S_N(j)}d_{ij}(g)\sigma_{ij}^2
=\sum_{i\in S_N(j)} w_{ij}=w_{+j}.
\numberthis\label{eq:d-sigma-w+j}
\end{align*}
Similarly,
\begin{align*}
f_{i}(g)+m_{j}(g)
&=d_{ij}(g)-b(g)\\
\sum_{i \in S_N(j)}\big\{f_{i}(g)+m_{j}(g)\big\}\sigma_{ij}^2&=\sum_{i \in S_N(j)}\big\{d_{ij}(g)-b(g)\big\}\sigma_{ij}^2\\
\sigma_{+j}^2m_{j}(g)+\sum_{i \in S_N(j)}\sigma_{ij}^2f_{i}(g)&= \sum_{i \in S_N(j)} d_{ij}(g) \sigma_{ij}^2 - \sigma_{+j}^2b(g)\\
\sigma_{+j}^2m_{j}(g)+\sum_{i \in S_N(j)}\sigma_{ij}^2f_{i}(g)&= w_{+j} - (\sigma_{++}^2)^{-1}w_{++}\sigma_{+j}^2, \quad j=1,...,J,
\numberthis\label{eq:m-relation}
\end{align*}
where the last line follows from \eqref{eq:bn-expression} and \eqref{eq:d-sigma-w+j}. 
Note that all $b(g), f_i(g), m_j(g)$ can be obtained by solving a system of $N+J+1$ linear equations from \eqref{eq:bn-expression}, \eqref{eq:f-relation} and \eqref{eq:m-relation}.
Now we seek to derive a simplified expression for $\|d(g)\|_{\sigma}^2$ in terms of $b(g), f_i(g), m_j(g)$. 
\begin{align*}
\sigma^2(g)
=& \|d(g)\|_{\sigma}^2  \\
=& \sum_{i=1}^{N}\sum_{j \in S_J(i)} \sigma_{ij}^2\big\{b(g)+f_{i}(g)+m_{j}(g)\big\}^2\\
=&b(g)\sum_{i=1}^{N}\sum_{j \in S_J(i)}\sigma_{ij}^2\big\{b(g)+f_{i}(g)+m_{j}(g)\big\} \numberthis\label{eq:dn-e1}\\
&+\sum_{i=1}^{N}\sum_{j \in S_J(i)}f_{i}(g)\sigma_{ij}^2\big\{b(g)+f_{i}(g)+m_{j}(g)\big\}\numberthis\label{eq:dn-e2}\\
&+\sum_{i=1}^{N}\sum_{j \in S_J(i)}m_{j}(g)\sigma_{ij}^2\big\{b(g)+f_{i}(g)+m_{j}(g)\big\}.\numberthis\label{eq:dn-e3}
\end{align*}
Let us consider each of these three terms separately,
\begin{align*}
(\ref{eq:dn-e1})&=b^2(g)\sigma_{++}^2+b(g)\sum_{i=1}^{N}f_i(g)\Big(\sum_{j \in S_J(i)}\sigma_{ij}^2\Big)+b(g)\sum_{j=1}^{J}m_{j}(g)\Big(\sum_{i \in S_N(j)}\sigma_{ij}^2\Big)\\
&=b^2(g)\sigma_{++}^2+b(g)\sum_{i=1}^{N}\sigma_{i+}^2f_{i}(g)+b(g)\sum_{j=1}^{J}\sigma_{+j}^2m_{j}(g)\\
&=b^2(g)\sigma_{++}^2.
\end{align*}
\begin{align*}
(\ref{eq:dn-e2})
&=b(g)\sum_{i=1}^{N}f_{i}(g)\Big(\sum_{j \in S_J(i)}\sigma_{ij}^2\Big)+\sum_{i=1}^Nf_{i}^2(g)\Big(\sum_{j \in S_J(i)}\sigma_{ij}^2\Big)+\sum_{i=1}^{N}\sum_{j \in S_J(i)} \sigma_{ij}^2f_{i}(g)m_{j}(g)\\
&=b(g)\sum_{i=1}^{N}f_{i}(g)\sigma_{i+}^2+\sum_{i=1}^{N}\sigma_{i+}^2f_{i}^2(g)+\sum_{i=1}^{N}\sum_{j \in S_J(i)} \sigma_{ij}^2f_{i}(g)m_{j}(g)\\
&=\sum_{i=1}^{N}\sigma_{i+}^2f_{i}^2(g)+\sum_{i=1}^{N}\sum_{j \in S_J(i)} \sigma_{ij}^2f_{i}(g)m_{j}(g).
\end{align*}
\begin{align*}
(\ref{eq:dn-e3})&=b(g)\sum_{j=1}^{J}m_{j}(g)\Big(\sum_{i \in S_N(j)}\sigma_{ij}^2\Big)+\sum_{j=1}^Jm_{j}^2(g)\Big(\sum_{i \in S_N(j)}\sigma_{ij}^2\Big)+\sum_{i=1}^{N}\sum_{j \in S_J(i)} \sigma_{ij}^2f_{i}(g)m_{j}(g)\\
&=b(g)\sum_{j=1}^{J}\sigma_{+j}^2m_{j}(g)+\sum_{j=1}^{J}\sigma_{+j}^2m_{j}^2(g)+\sum_{i=1}^{N}\sum_{j \in S_J(i)} \sigma_{ij}^2f_{i}(g)m_{j}(g)\\
&=\sum_{j=1}^{J}\sigma_{+j}^2m_{j}^2(g)+\sum_{i=1}^{N}\sum_{j \in S_J(i)} \sigma_{ij}^2f_{i}(g)m_{j}(g).
\end{align*}

Combining three terms together,  the result of the lemma follows with
\begin{align*}
\sigma^2(g)
&=\|d(g)\|_{\sigma}^2=b^2(g)\sigma_{++}^2+\sum_{i=1}^{N}\sigma_{i+}^2f_{i}^2(g)+\sum_{j=1}^{J}\sigma_{+j}^2m_{j}^2(g)+2\sum_{i=1}^{N}\sum_{j \in S_J(i)} \sigma_{ij}^2f_{i}(g)m_{j}(g).
\end{align*}
\end{proof}

As in the proof of Lemma \ref{lemma-sigma(g)}, we can solve a system of $N+J+1$ linear equations from \eqref{eq:bn-expression}, \eqref{eq:f-relation} and \eqref{eq:m-relation} for $f_i(g), i=1,...,N$, $m_j(g), j=1,...,J$ and $b(g)$. Then an exact expression for $\sigma^2(g)$ can be obtained by substituting these values into \eqref{eq: sigma-expression}. However, when $N$ and $J$ are large, it is difficult to solve this large system of linear equations. Furthermore, to study the order of $\sigma^2(g)$, we need an analytical form for analysis. The following set-ups are used to find an approximation for $\sigma^2(g)$. Define $\gamma_N >0$ to be the largest number such that for all $i=1,...,N, j=1,...,J, z_{ij}=1$, \begin{align*}
x^2\sigma_{ij}^2 \geq \gamma_N\Big(\frac{1}{|S_J(i)|}x^2\sigma_{i+}^2+\frac{1}{|S_N(j)|}x^2\sigma_{+j}^2\Big), 
\quad x \in \mathbb{R},\numberthis\label{eq:gamma_N-def}    
\end{align*}
where $|S_J(i)|$ and $|S_N(j)|$ are the cardinalities of $S_J(i)$ and $S_N(j)$ respectively. Note that there exist some $\gamma>0$ such that $\gamma_N>\gamma$ for all $N.$
For $i=1,...,N$ and $j=1,...,J$, further define 
\begin{align*}
 &f'_{i}(g)=(\sigma_{i+}^2)^{-1}w_{i+}-(\sigma_{++}^2)^{-1}w_{++}, \numberthis\label{eq:f_i'-def}\\  
 &m'_{j}(g)=(\sigma_{+j}^2)^{-1}w_{+j}-(\sigma_{++}^2)^{-1}w_{++},\numberthis\label{eq:m_j'-def}\\
 &f''_{i}(g)=f_{i}(g)-f'_{i}(g), \numberthis\label{eq:f_i''-def}\\ &m''_{j}(g)=m_{j}(g)-m'_{j}(g),\numberthis\label{eq:m_j''-def}.
\end{align*}
Then for $i=1,...,N, j=1,...,J$ with $z_{ij}=1$, define
\begin{align*}
 &\Check{\sigma}_{ij}^2=\sigma_{ij}^2-\gamma_N\Big(\frac{1}{|S_J(i)|}\sigma_{i+}^2+\frac{1}{\vert S_N(j)\vert}\sigma_{+j}^2\Big), \numberthis\label{eq:check-sigma-def}\\
 &d'_{ij}(g)=b(g)+f'_{i}(g)+m'_{j}(g), \numberthis\label{eq:dn'-def}\\
 &d''_{ij}(g)=f''_{i}(g)+m''_{j}(g)=d_{ij}(g)- d'_{ij}(g).\numberthis\label{eq:dn''-def}
\end{align*} 
By triangle inequality, \eqref{eq:dn''-def} then implies $$\|d'(g)\|_{\sigma}-\|d''(g)\|_{\sigma} \leq \|d(g)\|_{\sigma} \leq \|d'(g)\|_{\sigma}+\|d''(g)\|_{\sigma}.$$
We seek to use $\|d'(g)\|_{\sigma}$ as an approximation to $\sigma(g)=\|d(g)\|_{\sigma}$ while showing $\|d''(g)\|_{\sigma}$ is a negligible term  asymptotically under certain conditions. The analytical expression for $\|d'(g)\|_{\sigma}$ is given in Lemma \ref{lemma-d'n} below.

\begin{lemma}\label{lemma-d'n}
If $d'(g)$ is defined as in \eqref{eq:dn'-def}, then
\begin{align*}
    \|d'(g)\|_{\sigma}^2 =&\sum_{i=1}^{N}w_{i+}^2(\sigma_{i+}^2)^{-1}+\sum_{j=1}^{J}w_{+j}^2(\sigma_{+j}^2)^{-1} \\
    &+ 2\sum_{i=1}^{N}\sum_{j \in S_J(i)}w_{i+}w_{+j}\sigma_{ij}^2(\sigma_{i+}^2)^{-1}(\sigma_{+j}^2)^{-1}
    -3w_{++}^2(\sigma_{++}^2)^{-1}.
\end{align*}
\end{lemma}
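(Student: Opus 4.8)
The plan is to expand $\|d'(g)\|_{\sigma}^2$ directly from the definition. First I would simplify the summand: since $b(g)+f'_{i}(g)=(\sigma_{i+}^2)^{-1}w_{i+}$ and $b(g)+m'_{j}(g)=(\sigma_{+j}^2)^{-1}w_{+j}$ by \eqref{eq:f_i'-def}--\eqref{eq:m_j'-def} together with \eqref{eq:bn-expression}, the decomposition \eqref{eq:dn'-def} becomes $d'_{ij}(g)=(\sigma_{i+}^2)^{-1}w_{i+}+(\sigma_{+j}^2)^{-1}w_{+j}-(\sigma_{++}^2)^{-1}w_{++}$. Abbreviating $a_i=(\sigma_{i+}^2)^{-1}w_{i+}$, $c_j=(\sigma_{+j}^2)^{-1}w_{+j}$ and $e=(\sigma_{++}^2)^{-1}w_{++}$, we have $\|d'(g)\|_{\sigma}^2=\sum_{i=1}^N\sum_{j\in S_J(i)}\sigma_{ij}^2\,(a_i+c_j-e)^2$.

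Next I would expand $(a_i+c_j-e)^2=a_i^2+c_j^2+e^2+2a_ic_j-2a_ie-2c_je$ and evaluate the six resulting sums one at a time, using only the elementary marginal identities $\sum_{j\in S_J(i)}\sigma_{ij}^2=\sigma_{i+}^2$, $\sum_{i\in S_N(j)}\sigma_{ij}^2=\sigma_{+j}^2$, $\sum_{i}\sum_{j\in S_J(i)}\sigma_{ij}^2=\sigma_{++}^2$, and $\sum_i w_{i+}=\sum_j w_{+j}=w_{++}$. The $a_i^2$ term gives $\sum_i a_i^2\sigma_{i+}^2=\sum_i w_{i+}^2(\sigma_{i+}^2)^{-1}$; the $c_j^2$ term gives $\sum_j w_{+j}^2(\sigma_{+j}^2)^{-1}$; the $e^2$ term gives $e^2\sigma_{++}^2=w_{++}^2(\sigma_{++}^2)^{-1}$; the $2a_ic_j$ term gives exactly $2\sum_i\sum_{j\in S_J(i)}w_{i+}w_{+j}\sigma_{ij}^2(\sigma_{i+}^2)^{-1}(\sigma_{+j}^2)^{-1}$; and the two remaining cross terms each collapse, after summing out the index that does not appear in them, to $-2e\,w_{++}=-2w_{++}^2(\sigma_{++}^2)^{-1}$. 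Adding the contributions, the coefficient of $w_{++}^2(\sigma_{++}^2)^{-1}$ is $1-2-2=-3$, which is precisely the stated identity.

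I do not expect any genuine obstacle; the argument is a bounded, mechanical expansion. The only points requiring care are the bookkeeping of the three cross terms --- in particular recognising that $-2a_ie$ sums over $j$ to $-2e\,\sigma_{i+}^2 a_i=-2e\,w_{i+}$ and then over $i$ to $-2e\,w_{++}$, and symmetrically for $-2c_je$ --- and tracking the final constant, since a sign slip or a misplaced factor would change the $-3$. I would present the computation as the single display expanding $(a_i+c_j-e)^2$, followed by the six short line-by-line evaluations and the collection of like terms.
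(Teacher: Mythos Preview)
Your proposal is correct and takes essentially the same approach as the paper: a direct expansion of $\|d'(g)\|_\sigma^2$ using the marginal identities for $\sigma_{i+}^2$, $\sigma_{+j}^2$, $\sigma_{++}^2$ and $w_{++}$. Your version is in fact slightly tidier, since you first collapse $b(g)+f'_i(g)+m'_j(g)$ to the three-term form $a_i+c_j-e$ before squaring, whereas the paper keeps $b(g)$ explicit and expands a four-term sum; the resulting bookkeeping is otherwise identical.
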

\begin{proof}
Following from the definition of $d'(g)$, we can write 

\begin{align*}
\|d'(g)\|_{\sigma}^2 
=& \sum_{i=1}^{N}\sum_{j \in S_J(i)} \sigma_{ij}^2\big\{b(g)+(\sigma_{i+}^2)^{-1}w_{i+}+(\sigma_{+j}^2)^{-1}w_{+j}-2(\sigma_{++}^2)^{-1}w_{++}\big\}^2\\
=&~b(g)\sum_{i=1}^{N}\sum_{j \in S_J(i)}\sigma_{ij}^2\big\{b(g)+(\sigma_{i+}^2)^{-1}w_{i+}+(\sigma_{+j}^2)^{-1}w_{+j}
-2(\sigma_{++}^2)^{-1}w_{++}\big\} \numberthis\label{eq:dn'1}\\
&+\sum_{i=1}^{N}\sum_{j \in S_J(i)}\sigma_{ij}^2(\sigma_{i+}^2)^{-1}w_{i+}\big\{b(g)+(\sigma_{i+}^2)^{-1}w_{i+}
+(\sigma_{+j}^2)^{-1}w_{+j}-2(\sigma_{++}^2)^{-1}w_{++}\big\}\numberthis\label{eq:dn'2}\\
&+\sum_{i=1}^{N}\sum_{j \in S_J(i)}\sigma_{ij}^2(\sigma_{+j}^2)^{-1}w_{+j}\big\{b(g)+(\sigma_{i+}^2)^{-1}w_{i+}
+(\sigma_{+j}^2)^{-1}w_{+j}-2(\sigma_{++}^2)^{-1}w_{++}\big\}\numberthis\label{eq:dn'3}\\
&-2\sum_{i=1}^{N}\sum_{j \in S_J(i)}\sigma_{ij}^2(\sigma_{++}^2)^{-1}w_{++}\big\{b(g)+(\sigma_{i+}^2)^{-1}w_{i+}
+(\sigma_{+j}^2)^{-1}w_{+j}-2(\sigma_{++}^2)^{-1}w_{++}\big\}.\numberthis\label{eq:dn'4}
\end{align*}
We evaluate each of these four terms separately. For the first term,
\begin{align*}
\eqref{eq:dn'1}
=&b^2(g)\sigma_{++}^2+b(g)\sum_{i=1}^{N}\Big(\sum_{j \in S_J(i)}\sigma_{ij}^2\Big)(\sigma_{i+}^2)^{-1}w_{i+}-b(g)w_{++}+b(g) \sum_{j=1}^{J}w_{+j}-b(g)w_{++}\\
=&b^2(g)\sigma_{++}^2\\
=&(\sigma_{++}^2)^{-1}w_{++}^2,
\end{align*}
where the last line follows from \eqref{eq:bn-expression}. Now consider the second term,
\begin{align*}
(\ref{eq:dn'2})
=&b(g)w_{++}+\sum_{i=1}^Nw_{i+}^2(\sigma_{i+}^2)^{-1} + \sum_{i=1}^{N}\sum_{j \in S_J(i)}\sigma_{ij}^2(\sigma_{i+}^2)^{-1}w_{i+}(\sigma_{+j}^2)^{-1}w_{+j} - 2(\sigma_{++}^2)^{-1}w_{++}^2\\
=&-(\sigma_{++}^2)^{-1}w_{++}^2 + \sum_{i=1}^{N}w_{i+}^2(\sigma_{i+}^2)^{-1}+ \sum_{i=1}^{N}\sum_{j \in S_J(i)}w_{i+}w_{+j}\sigma_{ij}^2(\sigma_{i+}^2)^{-1}(\sigma_{+j}^2)^{-1}.
\end{align*}
Now consider the third term,
\begin{align*}
(\ref{eq:dn'3})=&b(g)w_{++}+\sum_{i=1}^{N}\sum_{j \in S_J(i)}\sigma_{ij}^2(\sigma_{+j}^2)^{-1}w_{+j}(\sigma_{i+}^2)^{-1}w_{i+}+\sum_{j=1}^{J}w_{+j}^2(\sigma_{+j}^2)^{-1}- 2 (\sigma_{++}^2)^{-1}w_{++}^2\\
=&-(\sigma_{++}^2)^{-1}w_{++}^2 +\sum_{j=1}^{J}w_{+j}^2(\sigma_{+j}^2)^{-1} +\sum_{i=1}^{N}\sum_{j \in S_J(i)}w_{i+}w_{+j}\sigma_{ij}^2(\sigma_{i+}^2)^{-1}(\sigma_{+j}^2)^{-1}.
\end{align*}

Now consider the last term,
\begin{align*}
(\ref{eq:dn'4})
&=-2b(g)w_{++}-2(\sigma_{++}^2)^{-1}w_{++}^2
-2(\sigma_{++}^2)^{-1}w_{++}^2+4b(g)w_{++}\\
&=-2(\sigma_{++}^2)^{-1}w_{++}^2-2(\sigma_{++}^2)^{-1}w_{++}^2-2(\sigma_{++}^2)^{-1}w_{++}^2+4(\sigma_{++}^2)^{-1}w_{++}^2\\
&=-2w_{++}^2(\sigma_{++}^2)^{-1}.
\end{align*}
Combining all these four terms together, we obtain
\begin{align*}
    \|d'(g)\|_{\sigma}^2 =&\sum_{i=1}^{N}w_{i+}^2(\sigma_{i+}^2)^{-1}+\sum_{j=1}^{J}w_{+j}^2(\sigma_{+j}^2)^{-1} \\
    &+ 2\sum_{i=1}^{N}\sum_{j \in S_J(i)}w_{i+}w_{+j}\sigma_{ij}^2(\sigma_{i+}^2)^{-1}(\sigma_{+j}^2)^{-1}
    -3w_{++}^2(\sigma_{++}^2)^{-1}.
\end{align*}
Hence the result of the lemma follows.
\end{proof}

Lemma \ref{lemma-d''n} below gives an analytical upper bound for  $\|d''(g)\|_{\sigma}$ so that we can show it is a negligible term under certain conditions. Define
\begin{align*}
 &l_i=-\sum_{j \in S_J(i)}w_{+j}\sigma_{ij}^2(\sigma_{+j}^2)^{-1}+w_{++}\sigma_{i+}^2(\sigma_{++}^2)^{-1}, \quad i=1,...,N,\numberthis\label{eq:l_i-def}\\
 &v_j=-\sum_{i \in S_N(j)}w_{i+}\sigma_{ij}^2(\sigma_{i+}^2)^{-1}+w_{++}\sigma_{+j}^2(\sigma_{++}^2)^{-1}, \quad j=1,...,J.\numberthis\label{eq:v_j-def}
\end{align*}

\begin{lemma}\label{lemma-d''n}
If $l_i$ and $v_j$ are defined as in 
\eqref{eq:l_i-def} and \eqref{eq:v_j-def}, respectively, then 
\begin{align*}
    \|d''(g)\|_{\sigma} \leq &\gamma_N^{-1}\Big[\sum_{i =1}^{N}l_i^2(\sigma_{i+}^2)^{-1}+\sum_{j=1}^{J}v_j^2(\sigma_{+j}^2)^{-1}\Big].
\end{align*}
\end{lemma}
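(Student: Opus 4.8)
The plan is to reduce $\|d''(g)\|_\sigma^2$ to an explicit bilinear pairing of the error coefficients $(f''_i(g),m''_j(g))$ against $(l_i,v_j)$, and then to extract the stated bound by a Cauchy--Schwarz argument combined with the coercivity built into the definition of $\gamma_N$. First I would pin down the linear relations satisfied by $f''$ and $m''$. Starting from the two systems \eqref{eq:f-relation} and \eqref{eq:m-relation} established inside the proof of Lemma~\ref{lemma-sigma(g)}, substitute $f_i(g)=f'_i(g)+f''_i(g)$ and $m_j(g)=m'_j(g)+m''_j(g)$ and use the closed forms \eqref{eq:f_i'-def}, \eqref{eq:m_j'-def}. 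Since $\sigma_{i+}^2 f'_i(g)=w_{i+}-(\sigma_{++}^2)^{-1}w_{++}\sigma_{i+}^2$ and symmetrically $\sigma_{+j}^2 m'_j(g)=w_{+j}-(\sigma_{++}^2)^{-1}w_{++}\sigma_{+j}^2$, the right-hand sides of \eqref{eq:f-relation}, \eqref{eq:m-relation} cancel exactly, leaving
\[
\sigma_{i+}^2 f''_i(g)+\sum_{j\in S_J(i)}\sigma_{ij}^2 m''_j(g)=-\sum_{j\in S_J(i)}\sigma_{ij}^2 m'_j(g)=l_i,
\]
\[
\sigma_{+j}^2 m''_j(g)+\sum_{i\in S_N(j)}\sigma_{ij}^2 f''_i(g)=-\sum_{i\in S_N(j)}\sigma_{ij}^2 f'_i(g)=v_j,
\]
where the last equalities are just the definitions \eqref{eq:l_i-def}, \eqref{eq:v_j-def} after expanding $m'_j(g)$ and $f'_i(g)$. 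I would also record the centering identities $\sum_i\sigma_{i+}^2 f''_i(g)=0$ and $\sum_j\sigma_{+j}^2 m''_j(g)=0$, which follow from the constraints \eqref{eq:fn}, \eqref{eq:mn} together with $\sum_i\sigma_{i+}^2 f'_i(g)=0$ and $\sum_j\sigma_{+j}^2 m'_j(g)=0$ (immediate from \eqref{eq:f_i'-def}, \eqref{eq:m_j'-def}).

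Next, using $d''_{ij}(g)=f''_i(g)+m''_j(g)$ from \eqref{eq:dn''-def}, I would expand $\|d''(g)\|_\sigma^2=\sum_i\sum_{j\in S_J(i)}\sigma_{ij}^2\{f''_i(g)+m''_j(g)\}^2$, regroup the terms once by the row index $i$ and once by the column index $j$, and apply the two relations above; the double sum collapses to $\|d''(g)\|_\sigma^2=\sum_{i=1}^N f''_i(g)\,l_i+\sum_{j=1}^J m''_j(g)\,v_j$. Applying Cauchy--Schwarz to each sum, with the grouping $f''_i(g)\,l_i=\{\sigma_{i+}f''_i(g)\}\{l_i/\sigma_{i+}\}$, gives
\[
\|d''(g)\|_\sigma^2\le\Bigl(\textstyle\sum_i\sigma_{i+}^2\{f''_i(g)\}^2\Bigr)^{1/2}\Bigl(\textstyle\sum_i l_i^2(\sigma_{i+}^2)^{-1}\Bigr)^{1/2}+\Bigl(\textstyle\sum_j\sigma_{+j}^2\{m''_j(g)\}^2\Bigr)^{1/2}\Bigl(\textstyle\sum_j v_j^2(\sigma_{+j}^2)^{-1}\Bigr)^{1/2},
\]
so it remains to bound the row and column energies $\sum_i\sigma_{i+}^2\{f''_i(g)\}^2$ and $\sum_j\sigma_{+j}^2\{m''_j(g)\}^2$ by a fixed multiple of $\gamma_N^{-1}\|d''(g)\|_\sigma^2$. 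For this I would use the splitting $\sigma_{ij}^2=\check\sigma_{ij}^2+\gamma_N\{|S_J(i)|^{-1}\sigma_{i+}^2+|S_N(j)|^{-1}\sigma_{+j}^2\}$ with $\check\sigma_{ij}^2\ge0$ (\eqref{eq:gamma_N-def}, \eqref{eq:check-sigma-def}), substitute the $\gamma_N$ term into $\|d''(g)\|_\sigma^2$, drop the nonnegative $\check\sigma_{ij}^2$ contribution, use the centering identities to handle the indefinite cross terms $f''_i(g)\sum_{j\in S_J(i)}m''_j(g)$ (and their column analogues), and invoke Condition~\ref{cond:speed}(c) to trade the local averages $|S_J(i)|^{-1}\sigma_{i+}^2$ for $\sigma_{i+}^2$ up to a constant. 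Feeding the resulting estimate $\sum_i\sigma_{i+}^2\{f''_i(g)\}^2+\sum_j\sigma_{+j}^2\{m''_j(g)\}^2\lesssim\gamma_N^{-1}\|d''(g)\|_\sigma^2$ back into the displayed inequality and cancelling a power of $\|d''(g)\|_\sigma$ yields the bound.

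The only genuine obstacle is this coercivity estimate. Positive-definiteness of $[\cdot,\cdot]_\sigma$ alone does not suffice, since the constant two-way array $f''_i\equiv a$, $m''_j\equiv -a$ carries zero $\sigma$-energy but nonzero row and column energy; the argument must therefore use that $d''(g)$ is $\sigma$-orthogonal to that constant direction in each block --- i.e.\ the centering identities --- in order to cancel the cross terms, and then use the balanced-missingness Condition~\ref{cond:speed}(c) to move between per-row and per-column averages of $\sigma_{ij}^2$ and the totals $\sigma_{i+}^2,\sigma_{+j}^2$. This is precisely why the auxiliary quantities $\check\sigma_{ij}^2$ in \eqref{eq:check-sigma-def} and $\gamma_N$ in \eqref{eq:gamma_N-def} are introduced, and arranging the loss to appear as the single factor $\gamma_N^{-1}$ is where the accounting has to be done with care.
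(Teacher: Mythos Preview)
Your reduction is right up to the bilinear identity: the relations
\[
\sigma_{i+}^2 f''_i+\sum_{j\in S_J(i)}\sigma_{ij}^2 m''_j=l_i,\qquad
\sigma_{+j}^2 m''_j+\sum_{i\in S_N(j)}\sigma_{ij}^2 f''_i=v_j,
\]
and the consequence $\|d''(g)\|_\sigma^2=\sum_i f''_i l_i+\sum_j m''_j v_j$ are exactly what the paper uses. The gap is in your coercivity step. After inserting the splitting \eqref{eq:check-sigma-def}, the cross terms are
\[
2\gamma_N\sum_{j} y_j\Big\{\sum_{i\in S_N(j)}\tfrac{1}{|S_J(i)|}\sigma_{i+}^2 x_i\Big\}
\quad\text{and}\quad
2\gamma_N\sum_{i} x_i\Big\{\sum_{j\in S_J(i)}\tfrac{1}{|S_N(j)|}\sigma_{+j}^2 y_j\Big\}.
\]
These vanish only if the \emph{local} sums in braces vanish for every $j$ (resp.\ every $i$). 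The global centerings $\sum_i\sigma_{i+}^2 f''_i=0$ and $\sum_j\sigma_{+j}^2 m''_j=0$ say nothing about those local sums, and Condition~\ref{cond:speed}(c) only controls the sizes $|S_J(i)|$, not the signs of the braces. Indeed the bound $\sum_i\sigma_{i+}^2(f''_i)^2+\sum_j\sigma_{+j}^2(m''_j)^2\le\gamma_N^{-1}\|d''\|_\sigma^2$ is false for generic globally-centered pairs: take two $K\times K$ blocks with equal $\sigma_{ij}^2$ joined by a single observed entry (all of Conditions~\ref{cond:speed}--\ref{cond:connect} hold, $\gamma_N=\tfrac12$), and set $f''\approx a$ on one row block, $\approx -a$ on the other, with $m''$ chosen analogously. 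Then $\|d''\|_\sigma^2=O(a^2)$ while the row/column energies are of order $K^2a^2$, so no universal constant suffices.

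The paper does not try to prove coercivity for the specific pair $(f'',m'')$. Instead it invokes a variational characterization (\cite{rao1973linear}): $\|d''(g)\|_\sigma^2$ equals the maximum of $(\sum_i x_i l_i+\sum_j y_j v_j)^2$ over $(x,y)$ with $D(x,y)\le 1$ \emph{and} the per-column/per-row orthogonality constraints
\[
\sum_{i\in S_N(j)}\tfrac{1}{|S_J(i)|}\sigma_{i+}^2 x_i=0\ \ (\forall j),\qquad
\sum_{j\in S_J(i)}\tfrac{1}{|S_N(j)|}\sigma_{+j}^2 y_j=0\ \ (\forall i).
\]
Under \emph{these} constraints the two cross terms above vanish identically, so $D(x,y)\ge\gamma_N\{\sum_i\sigma_{i+}^2x_i^2+\sum_j\sigma_{+j}^2y_j^2\}$ holds on the feasible set, and a single Cauchy--Schwarz on the enlarged ellipsoid gives the lemma. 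The key idea you are missing, therefore, is to pass from the concrete $(f'',m'')$ to a dual maximization in which one is \emph{free to impose} the many local orthogonality constraints that make the $\gamma_N$-splitting lossless; that passage is the content of the Rao reference, not a consequence of the two global centerings.
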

\begin{proof}
From the definitions of  $f_i'', m_j'', l_i$ and $v_j$ as in \eqref{eq:f_i''-def}, \eqref{eq:m_j''-def}, \eqref{eq:l_i-def} and \eqref{eq:v_j-def}, respectively, it can be easily verified that
\begin{align*}
  &\sigma_{i+}^2 f_i''+\sum_{j \in S_J(i)}\sigma_{ij}^2m_j''=l_i, \quad i=1,...,N,\\
  &\sigma_{+j}^2 m_j''+\sum_{i \in S_N(j)}\sigma_{ij}^2f_i''=v_j,\quad j=1,...,J.
\end{align*}
It can be shown $\|d''(g)\|_{\sigma}^2=\sum_{i=1}^{N}f_i''l_i+\sum_{j=1}^{J}m_j''v_j$, which can be seen as follows,
\begin{align*}
\sum_{i=1}^{N}f_i''l_i+\sum_{j=1}^{J}m_j''v_j
&=\sum_{i=1}^Nf_i''\big(\sigma_{i+}^2f_i''+\sum_{j \in S_J(i)}\sigma_{ij}^2m_j''\big)+\sum_{j=1}^{J}m_j''\big(\sigma_{+j}^2m_j''+\sum_{i \in S_N(j)}\sigma_{ij}^2f_i''\big)\\
&=\sum_{i=1}^{N}\sigma_{i+}^2f_i''^2+\sum_{j=1}^{J}\sigma_{+j}^2m_j''^2+2\sum_{i=1}^N\sum_{j \in S_J(i)}f_i''m_j''\sigma_{ij}^2\\
&= \sum_{i=1}^{N}\sum_{j \in S_J(i)}(f_i''+m_j'')^2\sigma_{ij}^2\\
&=\|d''(g)\|_{\sigma}^2.
\end{align*}
Furthermore, by {\citet[page 60]{rao1973linear}}, $\sum_{i=1}^{N}f_i''l_i+\sum_{j=1}^{J}m_j''v_j$ is the largest value of $\big(\sum_{i=1}^N x_il_i +\sum_{j=1}^{J}y_jv_j\big)^2,$
for $x_i \in \mathbb{R}$, $i=1,...,N$ and $y_j \in \mathbb{R}$, $j=1,...,J$ such that
\begin{align*}
&\sum_{i \in S_N(j)}\frac{1}{|S_J(i)|}\sigma_{i+}^2x_i=0,\quad j=1,...,J,\\
&\sum_{j\in S_J(i)}\frac{1}{|S_N(j)|}\sigma_{+j}^2y_j=0,\quad i=1,...,N,\\
&D(x, y)=\sum_{i=1}^N\sum_{j \in S_J(i)}\sigma_{ij}^2(x_i+y_j)^2\leq 1.
\end{align*}
Note
\begin{align*}
&\sum_{i=1}^N\sum_{j \in S_{J}(i)}(x_i+y_j)^2\Check{\sigma}_{ij}^2\\
=&\sum_{i=1}^N\sum_{j \in S_{J}(i)}(x_i+y_j)^2\Big\{\sigma_{ij}^2-\gamma_N\big(\frac{1}{|S_J(i)|}\sigma_{i+}^2+\frac{1}{|S_N(j)|}\sigma_{+j}^2\big)\Big\}\\
=&D(x, y)-\gamma_N\sum_{i=1}^N\sum_{j \in S_{J}(i)}(x_i+y_j)^2\Big\{\frac{1}{|S_J(i)|}\sigma_{i+}^2+\frac{1}{|S_N(j)|}\sigma_{+j}^2\Big\}\\
=&D(x, y)-\gamma_N\Big\{\sum_{i=1}^N\big(x_i^2\sigma_{i+}^2+\sum_{j \in S_j(i)}\frac{1}{|S_N(j)|}x_i^2\sigma_{+j}^2\big)+\sum_{j=1}^J\big(y_j^2\sigma_{+j}^2+\sum_{i \in S_N(j)}\frac{1}{|S_J(i)|}y_j^2\sigma_{i+}^2\big)\Big\}\\
&-2\gamma_N\sum_{j=1}^Jy_j\Big\{\sum_{i \in S_N(j)}\frac{1}{|S_J(i)|}\sigma_{i+}^2x_i\Big\}-2\gamma_N\sum_{i=1}^Nx_i\Big\{\sum_{j \in S_J(i)}\frac{1}{|S_N(j)|}\sigma_{+j}^2y_j\Big\}\\
=&D(x, y)-\gamma_N\Big\{\sum_{i=1}^N\big(x_i^2\sigma_{i+}^2+\sum_{j \in S_j(i)}\frac{1}{|S_N(j)|}x_i^2\sigma_{+j}^2\big)+\sum_{j=1}^J\big(y_j^2\sigma_{+j}^2+\sum_{i \in S_N(j)}\frac{1}{|S_J(i)|}y_j^2\sigma_{i+}^2\big)\Big\}.
\end{align*}
Re-arranging the above expression gives,
\begin{align*}
D(x, y)
=&\gamma_N\Big\{\sum_{i=1}^N\big(x_i^2\sigma_{i+}^2+\sum_{j \in S_j(i)}\frac{1}{|S_N(j)|}x_i^2\sigma_{+j}^2\big)+\sum_{j=1}^J\big(y_j^2\sigma_{+j}^2+\sum_{i \in S_N(j)}\frac{1}{|S_J(i)|}y_j^2\sigma_{i+}^2\big)\Big\} \\
&+ \sum_{i=1}^N\sum_{j \in S_{J}(i)}(x_i+y_j)^2\Check{\sigma}_{ij}^2\\
&\geq \gamma_N\Big\{\sum_{i=1}^N x_i^2\sigma_{i+}^2+\sum_{j=1}^Jy_j^2\sigma_{+j}^2\Big\}.
\end{align*}
It follows that 
$$
    \|d''(g)\|_{\sigma} \leq \gamma_N^{-1}\Big[\sum_{i =1}^{N}l_i^2(\sigma_{i+}^2)^{-1}+\sum_{j=1}^{J}v_j^2(\sigma_{+j}^2)^{-1}\Big].
$$
\end{proof}

Next, we give proofs for the supporting lemmas used in the proofs of Proposition~\ref{prop:connect} and the proofs of Theorems~\ref{thm-existence} and \ref{thm-3-sufficient-condition}.\\\\

{\sc Lemma \ref{lemma-sequence-and-bound}.
} {\it Assume Conditions \ref{cond:bound}--\ref{cond:speed} hold. If $A_p=\{f_{ij}: i=1,...,N, j=1,...,J, z_{ij}=1\}$ such that $f_{ij}(x)=x_{ij}$ for $x \in \Omega_N$. Let $C_N=|A_p|$, the cardinality of $A_p$.  There exist sequences $f_N > 0$ and $d_N\geq 0$ satisfying the followings.

\noindent(a). As $N \to \infty$, $f_N^2/\log C_N \to \infty.$ 

\noindent(b). As $N \to \infty$, $f_N^2(N_{*}^{-1}+J_{*}^{-1}) \to 0.$

\noindent(c). If $y, v \in \Omega_N$ and $\|y-M_N^*\|_{\sigma}(A_p)\leq f_N$, then there exists $n<\infty$ such that for all $ N > n$, $\|U_N(y, v)\|_{\sigma}(A_p) \leq d_N \|y-M_N^*\|_{\sigma}(A_p)\|v\|_{\sigma}(A_p).$ Furthermore, $d_Nf_N \to 0$ as  $N \to \infty$.}

\begin{proof}
 Condition \ref{cond:speed}(a) assumes $J_{*}^{-1}\log N \to 0$, which implies that $\log N^{*} \ll J_{*}$.
 Then there must exist a sequence $f_N > 0$ such that $\log N^{*} \ll f_N^2 \ll J_{*}$.
\begin{align*}
f_N^2/\log C_N &\geq 
\frac{f_N^2}{\log(J^{*}N^{*})}\\
&= \frac{f_N^2}{\log J^{*} + \log N^{*}}\\
&\geq \frac{f_N^2}{2\log N^{*}} 
 \to \infty \quad \text{as}\quad N \to \infty.
\end{align*}
The first inequality follows from the fact that $J_{*}N_{*}\leq C_N\leq J^{*}N^{*}$. 
The last line follows from $\log N^{*} \ll  f_N^2$. Therefore, the result of part (a) is satisfied.
We further note
\begin{align*}
   f_N^2(N_{*}^{-1}+J_{*}^{-1})&\leq \frac{2f_N^2}{J_{*}} \to 0 \quad \text{as}\quad N \to \infty.\numberthis\label{f_N^2-J-inverse}
\end{align*}
The last line follows from $f_N^2\ll J_{*}$. Therefore, part (b) of the lemma follows. 
To verify part (c), first note by Lemma \ref{lemma-peopleitemdiffvar}, for any point maps $f_{ij} \in A_p$, there exist $0<\tau_1, \tau_2<\infty$ such that for all $N > n$,
\begin{align*}
  \tau_1^{-1}{\left(N^{*-1}+J^{*-1}\right)^{\frac{1}{2}}} <\sigma(f_{ij})< \tau_2^{-1}\left(N_{*}^{-1}+J_{*}^{-1}\right)^{\frac{1}{2}}.\numberthis\label{ineq: var-f}  
\end{align*}
By the definition of $\|\cdot\|_{\sigma}(A_p)$, we have for any $y \in \Omega_N, f_{ij} \in A_p,$
\begin{align*}
&|f_{ij}(y)| \leq \|y\|_{\sigma}(A_p)\sigma(f_{ij}).\numberthis\label{ineq: A}  
\end{align*}
It follows from \eqref{ineq: var-f} and \eqref{ineq: A} that for any $i=1,...,N, j=1,...,J, z_{ij}=1,$
\begin{align*}
&|y_{ij}| \leq \tau_2^{-1}\|y\|_{\sigma}(A_p)\left(N_{*}^{-1}+J_{*}^{-1}\right)^{\frac{1}{2}}.\numberthis\label{eq: bound-yij}
\end{align*}
Since $\vert \sigma^2(y_{ij})-\sigma_{ij}^2\vert \leq 1$, note that there exists a positive $\tau_3< \infty$ such that
for any $y \in \Omega_N,$ one has for any $i=1,...,N, j=1,...,J, z_{ij}=1,$
\begin{align*}
\vert\sigma^2(y_{ij})-\sigma_{ij}^2\vert \leq \tau_3\vert y_{ij}-m_{ij}^*\vert.\numberthis\label{eq:bound-vad-diff}
\end{align*}
Since $A_p$ consists of point maps only, by the definition of $\|\cdot\|_{\sigma}(A_p)$, we have $\|U_N(y,v)\|_{\sigma}(A_p)$ is the maximum value of 
$\vert f_{ij}\big\{U_N(y,v)\big\}\vert/\sigma(f_{ij})$ over $f_{ij}\in A_p.$ Therefore, upper bounding $\|U_N(y,v)\|_{\sigma}(A_p)$ is equivalent to upper bounding all $\vert U_{ij}(y,v)\vert/\sigma(f_{ij})$. 
Note that for any $i=1,...,N, j=1,...,J, z_{ij}=1$, 
\begin{align*}
   |U_{ij}(y,v)|&=\Big|\sum_{i'=1}^N\sum_{j' \in S_J(i')}\Big[d_{i'j'}(f_{ij})\big\{\sigma^2(y_{i'j'})-\sigma_{i'j'}^2\big\}v_{i'j'}\Big]\Big|\\
   &\leq \sum_{i'=1}^N\sum_{j' \in S_J(i')}\big\{|d_{i'j'}(f_{ij})|\big\}\big\{|\sigma^2(y_{i'j'})-\sigma_{i'j'}^2|\big\} \big\{|v_{i'j'}|\big\}\\
   &\leq \sum_{i'=1}^N\sum_{j' \in S_J(i')}\big\{|d_{i'j'}(f_{ij})|\big\}  \big\{\tau_3 |y_{i'j'}-m_{i'j'}^*|\big\}\big\{|v_{i'j'}|\big\}\\
   &\leq \tau_2^{-2}\tau_3\left(N_{*}^{-1}+J_{*}^{-1}\right)\Big\{\|y-M_N^*\|_{\sigma}(A_p)\|v\|_{\sigma}(A_p)\Big\}\Big\{\sum_{i'=1}^N\sum_{j' \in S_J(i')}|d_{i'j'}(f_{ij})|\Big\},
\end{align*}
where the second last line follows from \eqref{eq:bound-vad-diff} and the last line follows from \eqref{eq: bound-yij}.
Further note that 
\begin{align*}
\sum_{i'=1}^N\sum_{j' \in S_J(i')}|d_{i'j'}(f_{ij})| \leq \sum_{i'=1}^N\sum_{j' \in S_J(i')}|d_{i'j'}'(f_{ij})|+\sum_{i'=1}^N\sum_{j' \in S_J(i')}|d_{i'j'}''(f_{ij})|.
\end{align*}
By definition, 
$
 d'_{i'j'}(g)=(\sigma_{i'+}^2)^{-1}w_{i'+}+(\sigma_{+j'}^2)^{-1}w_{+j'}-(\sigma_{++})^{-1}w_{++}
$ for any $g\in \Omega_{N}^*.$
When $g=f_{ij}$, $w_{i'+}=0$ if $i'\neq i,$ and $w_{i'+}=1$ if $i'=i$, $w_{+j'}=0$ if $j'\neq j,$ and $w_{+j'}=1$ if $j'=j,$ and $w_{++}=1$. Therefore, we can rewrite
\begin{align*}
 \sum_{i'=1}^N\sum_{j' \in S_J(i')}|d_{i'j'}'(f_{ij})| =&\sum_{i'=1}^N\sum_{j' \in S_J(i')}\Big\vert(\sigma_{i'+}^2)^{-1}w_{i'+}+(\sigma_{+j'}^2)^{-1}w_{+j'}-(\sigma_{++})^{-1}w_{++}\Big\vert\\
 \leq& \sum_{i'=1}^N\sum_{j' \in S_J(i')}(\sigma_{i'+}^2)^{-1}\vert w_{i'+}\vert +\sum_{i'=1}^N\sum_{j' \in S_J(i')}(\sigma_{+j'}^2)^{-1}\vert w_{+j'}\vert\\
 &+\sum_{i'=1}^N\sum_{j' \in S_J(i')}(\sigma_{++})^{-1}\vert w_{++}\vert\\
 =&\sum_{j'\in S_J(i)}(\sigma_{i+}^2)^{-1}+\sum_{i'\in S_N(j)}(\sigma_{+j}^2)^{-1}+\sum_{i'=1}^N\sum_{j' \in S_J(i')}(\sigma_{++})^{-1}\leq \tau_4,
\end{align*}
where $\tau_4$ is some positive constant such that $\tau_4 < \infty.$ Note also that there exists $\tau_5<\infty$ such that
\begin{align*}
\sum_{i'=1}^N\sum_{j' \in S_J(i')}|d_{i'j'}''(f_{ij})| \leq (J^*N^*)^{\frac{1}{2}}\|d''(f_{ij})\|_{\sigma}\leq \tau_5.
\end{align*}
{where the last inequality is from~\eqref{eq:d2g rate} that $\|d''(f_{ij})\|_{\sigma} = o(N^{*-1})$.} As a result,
\begin{align*}
\|U_N(y,v)\|_{\sigma}(A_p) \leq \tau_1\tau_2^{-2}\tau_3(\tau_4+\tau_5)\left(N_{*}^{-1}+J_{*}^{-1}\right)^{\frac{1}{2}}\Big\{\|y-M_N^*\|_{\sigma}(A_p)\|v\|_{\sigma}(A_p)\Big\}.
\end{align*}
Therefore, we can set $d_N=\tau_1\tau_2^{-2}\tau_3(\tau_4+\tau_5)\left(N_{*}^{-1}+J_{*}^{-1}\right)^{\frac{1}{2}}$.
By \eqref{f_N^2-J-inverse}, we have $f_N(N_{*}^{-1}+J_{*}^{-1})^{1/2} \to 0$ as $N \to \infty.$ Therefore, it follows 
\begin{align*}
d_Nf_N&=\tau_1\tau_2^{-2}\tau_3(\tau_4+\tau_5)\left(N_{*}^{-1}+J_{*}^{-1}\right)^{\frac{1}{2}}f_N \to 0,\quad \text{as}\quad N \to \infty.
\end{align*}
Hence, the result of part (c) is also satisfied.
\end{proof}

{\sc Lemma \ref{lemma-bound-Zn}.}
{\it Let $A \subset \Omega_{N}^*.$ 
Let $C_N$ denote the cardinality of $A$. 
If there exist sequences $f_N> 0$ and $d_N\geq 0$ satisfying
(a). $0<C_N<\infty$ and $f_N^2/\log C_n \to \infty$ as $N \to \infty,$ (b). If $y, v \in \Omega_N$ and $\|y-M_N^*\|_{\sigma}(A)\leq f_N$, then there exists $n<\infty$ such that for all $ N > n$, $\|U_N(y, v)\|_{\sigma}(A) \leq d_N \|y-M_N^*\|_{\sigma}(A)\|v\|_{\sigma}(A),$ (c). $d_Nf_N \to 0$ as  $N \to \infty$.
Then pr$\big(\|R_N\|_{\sigma}(A) < \frac{1}{2}f_N \big) \to 1$ as $N \to \infty.$}
\begin{proof}
Denote $A=\{g_k:k=1,...,C_N\}$ and let $w_{k} \in \Omega_N$ be defined for $k=1,...,C_N$ by 
 $g_k(x)=[w_{k}, x]_{\sigma},  x\in \Omega_N.$
Let $W_{k}=\|w_{k}\|_{\sigma}^{-1}\sum_{i=1}^N\sum_{j \in S_J(i)}w_{ijk}(Y_{ij}-E_{ij})$ for $k=1,...,C_N$ so that 
 $\|R_N\|_{\sigma}(A)=\max_{k=1,...,C_N}\vert W_{k}\vert.$ 
We consider the log moment generating function of $W_{k}$, denoted as $\log G_{k}(t)$. Write $w'_{k}=w_{k}/\|w_{k}\|_{\sigma}$, $k=1,...,C_N,$ for simplicity, and we have
\begin{align*}
\log G_{k}(t)
&=\log \mathbb{E}[e^{tW_{k}}]\\
&=\log \mathbb{E}\Big[\exp\Big\{\frac{t}{\|w_{k}\|_{\sigma}}\sum_{i=1}^N\sum_{j \in S_J(i)}w_{ijk}(Y_{ij}-E_{ij})\Big\}\Big]\\
&=-t\sum_{i=1}^N\sum_{j \in S_J(i)}w_{ijk}'E_{ij}+\log \prod_{i=1}^N\prod_{j \in S_J(i)}\mathbb{E}\big\{\exp(tw_{ijk}'Y_{ij})\big\}, \quad \text{by independence}\\
&=-t\sum_{i=1}^N\sum_{j \in S_J(i)}w_{ijk}'E_{ij}+\sum_{i=1}^N\sum_{j \in S_J(i)}\log \mathbb{E}\big\{\exp(tw_{ijk}'Y_{ij})\big\}\\
&=\sum_{i=1}^N\sum_{j \in S_J(i)}\Big[\log\{1+\exp(m_{ij}^*)\}^{-1}-\log\{1+\exp(tw_{ijk}'+m_{ij}^*)\}^{-1}-tw_{ijk}'E_{ij}\Big]\\
&=\sum_{i=1}^N\sum_{j \in S_J(i)}\Big[\log\{h(m_{ij}^*)\}-\log\{h(tw_{ijk}'+m_{ij}^*)\}-tw_{ijk}'E_{ij}\Big],\numberthis\label{eq:mgf}
\end{align*}
where we have denoted $h(x)=\{1+\exp(x)\}^{-1}$. We apply Taylor expansion to $\log\{h(tw_{ijk}'+m_{ij}^*)\}$ with respect to $m_{ij}^*$. For some $t'=\alpha t$ with $0 < \alpha < 1$, we have,
\begin{align*}
\log\{h(tw_{ijk}'+m_{ij}^*)\}&=\log h(m_{ij}^*) -E_{ij}tw_{ijk}'-\frac{t^2}{2}w_{ijk}'^2\sigma^2\big(m_{ij}^*+t'w_{ijk}'\big).
\end{align*}
Substitute into \eqref{eq:mgf},
\begin{align*}
\log G_{k}(t) &=\frac{t^2}{2}\sum_{i=1}^N\sum_{j \in S_J(i)}w_{ijk}'^2\sigma^2\big(m_{ij}^*+t'w_{ijk}'\big),\quad \quad |t|\leq f_N.
\end{align*}
{Applying Markov inequality, we have}
{
\begin{eqnarray}
pr\Big(W_{k}\geq \frac{1}{2}f_N\Big) &=& pr\Big\{\exp(f_N W_{k}/2)\geq \exp(f_N^2/4)\Big\} \nonumber \\
&\leq & \frac{\mathbb{E}\{ \exp(f_N W_{k}/2)\}}{\exp\Big(f_N^2/4\Big)} \nonumber \\
&=&\exp\Big(-\frac{1}{4}f_N^2\Big)G_{k}\Big(\frac{1}{2}f_N\Big), \quad k=1,...,C_N, \nonumber
\end{eqnarray}
and similarly we get
$$pr\Big(-W_{k}\geq \frac{1}{2}f_N\Big)\leq \exp\Big(-\frac{1}{4}f_N^2\Big)G_{k}\Big(-\frac{1}{2}f_N\Big), \quad k=1,...,C_N.$$ 
}
Furthermore note that,
$$\log G_{k}\Big(\frac{1}{2}f_N\Big), \log G_{k}\Big(-\frac{1}{2}f_N\Big) \leq \frac{1}{8}f_N^2\Big(1+\frac{d_Nf_N}{2}\Big)\quad k=1,...,C_N.$$
Applying the Bonferroni inequality, 
\begin{align*}
 pr\Big\{\|R_N\|_{\sigma}(A)\geq \frac{1}{2}f_N\Big\}&\leq 2C_N\exp\Big\{-\frac{1}{8}f_N^2\big(1-\frac{d_Nf_N}{2}\big)\Big\}   \\
 &= 2\exp\Big\{\log C_N-\frac{1}{8}f_N^2\big(1-\frac{d_Nf_N}{2}\big)\Big\}\\
 &\to 0 \quad \text{as}\quad N\to \infty,
\end{align*}
where the last step follows from the assumption $f_N^2/\log C_N \to \infty$ as $N\to \infty.$ Hence the result of the lemma follows.
\end{proof}
{\sc Lemma \ref{lemma-bound-diff-btw-estimate-truth}.} {\it Assume Conditions~ \ref{cond:speed}, \ref{cond:bound} and \ref{cond:connect} hold. Let $A \subset \Omega_{N}^*.$  If there exist sequences $f_N > 0$ and $d_N\geq 0$ satisfying
(a). pr$\big(\|R_N\|_{\sigma}(A) < \frac{1}{2}f_N \big) \to 1$ as $N \to \infty,$ (b). If $y, v \in \Omega_N$ and $\|y-M_N^*\|_{\sigma}(A)\leq f_N$, then there exists $n<\infty$ such that for all $ N > n$, $\|U_N(y, v)\|_{\sigma}(A) \leq d_N \|y-M_N^*\|_{\sigma}(A)\|v\|_{\sigma}(A),$ (c). $d_Nf_N \to 0$ as  $N \to \infty$. Then, as $N\to \infty$, with probability approaching 1 that,}
\begin{align*}
\Big\vert\frac{\|\hat{M}_N-M_N^*\|_{\sigma}(A)}{\|R_N\|_{\sigma}(A)}-1\Big\vert\leq d_N^{\frac{1}{2}} \to 0 \quad\text{and}\quad
\|\hat{M}_N-M_N^*-R_N\|_{\sigma}(A)\leq d_N\|R_N\|_{\sigma}^2(A).
\end{align*}
\begin{proof}
Write $z_N= \|R_N\|_{\sigma}(A)$ for simplicity. 
Consider a sequence \{$h_{Nk}: k=0,1,...\}$, with $h_{N0}=0$ and   $h_{N(k+1)}=z_N+ d_Nh_{Nk}^2/2$ for $k=0, 1, 2,\ldots.$  Define another sequence $$l_N= \frac{2z_N}{1+(1-2z_Nd_N)^{\frac{1}{2}}}.$$
By Kantorovich and Akilov (1964, pages 695-711), if $z_N < \frac{1}{2}f_N$ and $z_N d_N < \frac{1}{2}$ (which hold with probability tending to 1 by (a), (b) and (c)), it follows 
\begin{align*}
\|t_{Nk}-\hat{M}_N\|_{\sigma}(A) \leq l_N-h_{Nk}, \quad  k= 0,1,2,..., \numberthis\label{eq:residual-parameter-rate}
\end{align*}
where $\{t_{Nk}: k=0,1,...\}$ is the sequence constructed in the proof of Theorem~\ref{thm-existence}.
When $k=0$,  \eqref{eq:residual-parameter-rate} implies
$
\|M_N^*-\hat{M}_N\|_{\sigma}(A) \leq l_N.
$
When $k=1$, \eqref{eq:residual-parameter-rate} implies
\begin{align*}
\|M_N^*+R_N-\hat{M}_N\|_{\sigma}(A)\leq l_N-z_N.\numberthis\label{eq:upper-bound-diff-M-residual}
\end{align*}
It follows that 
$
\big|\|M_N^*-\hat{M}_N\|_{\sigma}(A)-\|R_N\|_{\sigma}(A)\big| \leq l_N-z_N,
$
where  
\begin{align*}
l_N-z_N &=\frac{z_N\{1-(1-2z_Nd_N)^{\frac{1}{2}}\}}{1+(1-2z_Nd_N)^{\frac{1}{2}}}.
\end{align*}
If we view $x=z_Nd_N$ and $f(x)=\{1-(1-2x)^{1/2}\}/\{1+(1-2x)^{1/2}\}.$ 
We note $f(0)=0$, $f(1/2)=1$ and $f'(0)=1/4 < 1$ and $f''(x)>0$ for all $x<1/2$. Therefore, $f(x)<x$ for all $x<1/2.$ Hence, whenever $d_Nz_N<1/2,$  we must have
$l_N-z_N \leq d_Nz_N^2.$
We know that with probability tending to 1 that $d_Nz_N<1/2.$ 
Hence the second part of the lemma follows from \eqref{eq:upper-bound-diff-M-residual}.
Also as $N\to \infty$, with probability approaching 1 that, 
\begin{align*}
  \Big\vert\|\hat{M}_N-M_N^*\|_{\sigma}(A)-\|R_N\|_{\sigma}(A)\Big\vert^{2}\leq d_N\|R_N\|_{\sigma}^{2}(A). \numberthis\label{eq: consistency1}
\end{align*}
Re-write \eqref{eq: consistency1}, the result of the first part of the lemma then follows.
\end{proof}

{\sc Lemma \ref{lemma-peopleitemdiffvar}.}
{\it Assume Conditions \ref{cond:speed}, \ref{cond:bound} and \ref{cond:connect} hold and $\sum_{i=1}^N\theta_i=0$, the asymptotic variance of the maximum likelihood estimator of $m_{ij}^*,$ $var(\hat{m}_{ij})$,  for any $i=1,...,N$ and $j=1,...,J$, takes the form,}\yc{
\begin{align*}
var(\hat{m}_{ij}) = (\sigma_{i+}^2)^{-1}+(\sigma_{+j}^2)^{-1}+O\left(N_{*}^{-1}J_{*}^{-1}\right)\quad \text{as} \quad N \to \infty.
\end{align*}}
\begin{proof}
If $z_{ij}=1$, then we can simply use a linear function $f_{ij}$ with $f_{ij}(x)=x_{ij}.$
We apply $\|d'(f_{ij})\|_{\sigma}^2$ to approximate $\sigma^2(f_{ij})$. With $w_{i+}=1, w_{k+}=0,$ for all $k=1,...,i-1,i+1,...,N, w_{+j}=1, w_{+l}=0$ for all $l=1,...,j-1,j+1,...,J$ and $w_{++}=1$. We obtain
\begin{align*}
\|d'(f_{ij})\|_{\sigma}^2
&= (\sigma_{i+}^2)^{-1}+(\sigma_{+j}^2)^{-1}+O\left(N_{*}^{-1}J_{*}^{-1}\right)\quad \quad\text{as} \quad N\to \infty.
\end{align*}
If $z_{ij}= 0$, then we can apply Condition \ref{cond:connect}, there must exist $1 \leq i_1, i_2, ...., i_k \leq N$ and $1 \leq j_1, j_2,....,j_k \leq J$ such that $
z_{ij_1}=z_{i_1j_1}=z_{i_1j_2}=z_{i_2j_2}=...=z_{i_kj_k}=z_{i_kj}=1.
$
Consider a linear function $g_2$ defined as 
\begin{align*}
g_{ij}(x)&=x_{ij_1}-x_{i_1j_1}+x_{i_1j_2}-x_{i_2j_2}+...+x_{i_{k-1}j_k}-x_{i_kj_k}+x_{i_kj}\\
&=\theta_i-\beta_j.
\end{align*}
In this case, similarly we have $w_{i+}=1, w_{k+}=0,$ for all $k=1,...,i-1,i+1,...,N, w_{+j}=1, w_{+l}=0$ for all $l=1,...,j-1,j+1,...,J$ and $w_{++}=1$. Note these values are exactly the same as those of $g_1$. Therefore,
\begin{align*}
\|d'(g_{ij})\|_{\sigma}^2
&= (\sigma_{i+}^2)^{-1}+(\sigma_{+j}^2)^{-1}+O\left(N_{*}^{-1}J_{*}^{-1}\right)\quad \quad\text{as} \quad N\to \infty.
\end{align*}
In both cases, {$\|d''(g)\|_{\sigma}^2 = o( N^{*-2})$}. To see this, note that in both cases above, 
\begin{align*}
l_p&=\begin{cases}
   -\sigma_{pj}^2(\sigma_{+j}^2)^{-1}+\sigma_{p+}^2(\sigma_{++}^2)^{-1}& \text{if} \quad z_{pj}=1 \\
\sigma_{p+}^2(\sigma_{++}^2)^{-1}&\text{if} \quad z_{pj}=0
    \end{cases}\\
    &=O\left(N_{*}^{-1}\right)\quad \quad\text{as} \quad N\to \infty, \quad\quad p=1,...,N.
\end{align*}
\begin{align*}
v_q&= \begin{cases}
       -\sigma_{iq}^2(\sigma_{i+}^2)^{-1}+\sigma_{+q}^2(\sigma_{++}^2)^{-1}& \text{if} \quad z_{iq}=1 \\
       \sigma_{+q}^2(\sigma_{++}^2)^{-1}&\text{if} \quad z_{iq}=0
    \end{cases}\\
    &=O\left(J_{*}^{-1}\right)\quad \quad\text{as} \quad N\to \infty, \quad\quad q=1,...,J.
\end{align*}
It follows that 
{\begin{eqnarray}
    \|d''(f_{ij})\|_{\sigma}^2=\|d''(g_{ij})\|_{\sigma}^2&\leq & \gamma_N^{-2}\Big\{\sum_{p=1}^N l_p^2(\sigma_{p+}^2)^{-1}+\sum_{q=1}^Jv_q^2(\sigma_{+q}^2)^{-1}\Big\}^2 \nonumber \\
    &\leq & \gamma^{-2} \Big\{\sum_{p=1}^N l_p^2(\sigma_{p+}^2)^{-1}+\sum_{q=1}^Jv_q^2(\sigma_{+q}^2)^{-1}\Big\}^2 \label{eq:gamma N bound} \\
    &=&o\left( N^{*-2}\right)\quad \quad\text{as} \quad N\to \infty, \label{eq:d2g rate}
\end{eqnarray}}
{where~\eqref{eq:gamma N bound} is from the definition for $\gamma_N$ that there exist some $\gamma>0$ such that $\gamma_N>\gamma$ for all $N$. The last equation~\eqref{eq:d2g rate} follows from Condition \ref{cond:speed}(b)--(c).} 
Since for any $g\in \Omega_{N}^*$, $$\big(\|d'(g)\|_{\sigma}-\|d''(g)\|_{\sigma}\big)^2 \leq \sigma^2(g) \leq \big(\|d'(g)\|_{\sigma}+\|d''(g)\|_{\sigma}\big)^2,$$
it follows \yc{var$(\hat{m}_{ij})= (\sigma_{i+}^2)^{-1}+(\sigma_{+j}^2)^{-1}+O(N_{*}^{-1}J_{*}^{-1})$}  as $N\to \infty.$ Note that the {$O(N_{*}^{-1}J_{*}^{-1})$ and $o(N^{*-2})$ are} negligible comparing with the terms $(\sigma_{i+}^2)^{-1}$ and $(\sigma_{+j}^2)^{-1}$. 
\end{proof}

{\sc Lemma \ref{lemma-sequence-and-bound-A-beta}.}
{\it Assume {Conditions \ref{cond:bound}--\ref{cond:speed2}} hold. If $A_{\beta}=\{g_{j}: j=1,...,J\}$ such that $g_j \in \Omega_{N}^*$ and $g_j(x)=\beta_j$ for $x \in \Omega_N$. Let $C_N=|A_{\beta}|=J$ be the cardinality of $A_{\beta}$.  
For any positive sequence $f_N$ such that $f_N^2/\log J \to \infty$ and $f_N^2N_{*}^{-1/2}\to 0$ as $N \to \infty$, there exists a sequence $d_N \geq 0$ satisfying the followings. 

\noindent(a). If $y, v \in \Omega_N$ and $\|y-M_N^*\|_{\sigma}(A_{\beta})\leq f_N$, then there exists $n<\infty$ such that for all $ N > n$, $\|U_N(y, v)\|_{\sigma}(A_{\beta}) \leq d_N \|y-M_N^*\|_{\sigma}(A_{\beta})\|v\|_{\sigma}(A_{\beta}).$ 

\noindent(b). $d_Nf_N^2 \to 0$ as  $N \to \infty$.



}

\begin{proof}
{First we note we have $\log J \ll N_{*}^{1/2}$ by Condition \ref{cond:speed2}(b),} so the rate requirements for $f_N$ is valid.
To find a valid $d_N$, we seek to upper bound $\|U_N(y,v)\|_{\sigma}(A_{\beta})$ and then show that $d_Nf_N \to 0$ as $N\to \infty$ for all $f_N$ satisfying the rate requirements $f_N^2/\log J \to \infty$ and $f_N^2N_{*}^{-1/2}\to 0$ as $N \to \infty$.
For any $y, v \in \Omega_N,$ by the definition of $\|\cdot\|_{\sigma}(A_{\beta})$, we have $$\|U_N(y,v)\|_{\sigma}(A_{\beta}) = \max_{g_j\in A_{\beta}} \vert g_j\{U_N(y,v)\} \vert/\sigma(g_j).$$ 
First note that by Lemma \ref{lemma-itemvar}, $\sigma^2(g_j)=(\sigma_{+j}^2)^{-1}+O\big\{(N_{*}J_{*})^{-1}\big\}$ for any $g_j \in A_{\beta}$. Therefore, there exist positive $0<c_1, c_2<\infty$ such that for all $N>n,$
$
c_1^{-1}{N^{*-{1}/{2}}}<\sigma(g_j) < c_2^{-1}N_{*}^{-{1}/{2}},
$ for all $g_j \in A_{\beta}.$
So we just need to find an upper bound for $\vert g_j\{U_N(y,v)\} \vert$ that holds for all $g_j \in A_{\beta}.$  
Consider
\begin{align*}
\vert g_j\{U_N(y,v)\}\vert &= \Big\vert \sum_{i'=1}^N\sum_{j'\in S_J(i')}d_{i'j'}(g_j)\{\sigma^2(y_{i'j'})-\sigma_{i'j'}^2\}v_{i'j'}\Big\vert\\
&\leq \sum_{i'=1}^N\sum_{j'\in S_J(i')} \vert d_{i'j'}(g_j) \vert\cdot \vert\sigma^2(y_{i'j'})-\sigma_{i'j'}^2\vert\cdot\vert v_{i'j'}\vert.
\end{align*}
Note $0\leq \sigma^2(y_{ij}), \sigma_{ij}^2 \leq 1,$ so $\vert\sigma^2(y_{ij})-\sigma_{ij}^2\vert\leq 1$.  
It can be implied that there exists some positive $c_3<\infty$ such that $\vert\sigma^2(y_{ij})-\sigma_{ij}^2\vert \leq  c_3 \vert g_j(y-M_N^*) \vert$.
Again, by the definition of $\|\cdot\|_{\sigma}(A_{\beta})$, we have
$\vert g_j(y-M_N^*)\vert\leq \| y-M_N^*\|_{\sigma}(A_{\beta})\sigma(g_j).$ 
Therefore, 
for all $i=1,...,N, j=1,...,J, z_{ij}=1,$ \begin{align*}
\vert\sigma^2(y_{ij})-\sigma_{ij}^2\vert\leq c_2^{-1}c_3 N_{*}^{-1/2}\|y-M_N^*\|_{\sigma}(A_{\beta}).
\end{align*} 
On the other hand, 
using a similar strategy, we can show that there exists a positive $c_4<\infty$ such that for all $i=1,...,N, j=1,...,J, z_{ij}=1,$ 
\begin{align*}
\vert v_{ij}\vert \leq c_2^{-1}c_4N_{*}^{-1/2}\|v\|_{\sigma}(A_{\beta}).   
\end{align*} 
Further note that
\begin{align*}
\sum_{i'=1}^N\sum_{j'\in S_J(i')}\vert d_{i'j'}(g_j)\vert\leq \sum_{i'=1}^N\sum_{j'\in S_J(i')}\vert d_{i'j'}'(g_j)\vert +\sum_{i'=1}^N\sum_{j'\in S_J(i')}\vert d_{i'j'}''(g_j)\vert.
\end{align*}
By definition, we know $d_{i'j'}'=(\sigma_{i'+}^2)^{-1}w_{i'+}+(\sigma_{+j'}^2)^{-1}w_{+j'}-(\sigma_{++}^2)^{-1}w_{++}.$ 
For any $g_j \in A_{\beta},$ $w_{i'+}=-1/N,$ for $i'=1,...,N,$ $w_{+j'}=-1$ if $j'=j$ and $w_{+j'}=0$ if $j'\neq j,$ $w_{++}=-1$. 
Hence, 
\begin{align*}
d_{i'j'}'(g_j)&= \begin{cases}
      -\frac{1}{N}(\sigma_{i'+}^2)^{-1}-(\sigma_{+j'}^2)^{-1}+(\sigma_{++}^2)^{-1}& \text{if} \quad j'=j \\
       -\frac{1}{N}(\sigma_{i'+}^2)^{-1}+(\sigma_{++}^2)^{-1} &\text{if} \quad j'\neq j.
    \end{cases}
\end{align*}
It follows
\begin{align*}
\sum_{i'=1}^N\sum_{j'\in S_J(i')}\vert d_{i'j'}'(g_j)\vert &\leq  \frac{J^*}{N}\sum_{i'=1}^N(\sigma_{i'+}^2)^{-1}+N^*(\sigma_{+j}^2)^{-1}+\sum_{i'=1}^N\sum_{j'\in S_J(i')}(\sigma_{++}^2)^{-1} \leq c_5,
\end{align*}
for some positive $c_5< \infty.$
On the other hand,
\begin{align*}
\sum_{i'=1}^N\sum_{j'\in S_J(i')}\vert d_{i'j'}''(g_j)\vert& \leq (N^*J^*)^{\frac{1}{2}}\|d''(g_j)\|_{\sigma} \leq c_6,
\end{align*}
for some positive $c_6 < \infty$. {The last step follows from Lemma \ref{lemma-itemvar} which implies that $\|d''(g_j)\|_{\sigma}=o\left( N^{*-1}\right)$.} Overall, 
\begin{align*}
\|U_N(y,v)\|_{\sigma}(A_{\beta}) &= \max_{g_j\in A_{\beta}} \vert g_j\{U_N(y,v)\} \vert/\sigma(g_j)\\
&\leq \max_{g_j\in A_{\beta}} \vert g_j\{U_N(y,v)\} \vert \cdot \max_{g_j\in A_{\beta}}\{\sigma^{-1}(g_j)\}.\\
&\leq c_1c_2^{-2}c_3c_4(c_5+c_6) N_{*}^{-\frac{1}{2}}\|y-M_N^*\|_{\sigma}(A_{\beta})\|v\|_{\sigma}(A_{\beta}).
\end{align*}
Note that by taking $d_N= c_1c_2^{-2}c_3c_4(c_5+c_6) N_{*}^{-1/2},$ part (a) of the lemma follows. Furthermore, by the rate requirement of $f_N$, for any positive sequence $f_N$ such that $\log J\ll f_N^2\ll N_{*}^{1/2}$, it can be seen easily that $d_Nf_N^2 \to 0$ as $N\to \infty$. Therefore, part (b) of the lemma follows.
\end{proof}

{\sc Lemma \ref{lemma-itemvar}.}
{\it Assume {Conditions \ref{cond:bound}--\ref{cond:speed2}} hold and $\sum_{i=1}^N\theta_i=0$. The asymptotic variance of the maximum likelihood estimator of an individual column parameter, var$(\hat{\beta}_j)$, asymptotically attains the oracle variance $(\sigma_{+j}^{2})^{-1}$ in the sense that}
\begin{align*}
var(\hat{\beta}_j)=(\sigma_{+j}^{2})^{-1}+O(N_{*}^{-1}J_{*}^{-1})\quad \quad \text{as} \quad N \to \infty.\numberthis\label{eq:itemvar}
\end{align*}
\begin{proof} 
We seek to construct a linear function $g_j \in \Omega_{N}^*$ such that $g_j(x)=\beta_j$ so that we can use $\|d'(g_j)\|_{\sigma}^2$ defined in Lemma \ref{lemma-d'n} to approximate var$(\hat{\beta}_j).$
To construct such a $g_j$, we may want to include all $x_{ij}$, $i=1,...,N$, in $g_j$ so that we can apply the constraint $\sum_{i=1}^{N}\theta_i=0$ to solve for $\beta_j$. For $i \in S_{N}(j)$, we use $x_{ij}=\theta_i-\beta_j$ directly. 
For each $i \in S_{N_{\phi}}(j)$, by Condition \ref{cond:connect},
there must exist $1\leq i_{i1}, i_{i2},...,i_{ik}\leq N$ and $1\leq j_{i1}, j_{i2},...,j_{ik}\leq J$ such that 
$
    z_{i,j_{i1}}=z_{i_{i1},j_{i1}}=z_{i_{i1},j_{i2}}=z_{i_{i2},j_{i2}}=...=z_{i_{ik},j_{ik}}=z_{i_{ik},j}=1,
$ 
with
\begin{align*}
    &x_{i,j_{i1}}-x_{i_{i1},j_{i1}}+x_{i_{i1},j_{i2}}-x_{i_{i2},j_{i2}}+...-x_{i_{ik},j_{ik}}+x_{i_{ik},j}\\
    =&(\theta_i-\beta_{j_{i1}})-(\theta_{i_{i1}}-\beta_{j_{i1}})+(\theta_{i_{i1}}-\beta_{j_{i2}})-(\theta_{i_{i2}}-\beta_{j_{i2}})+...-(\theta_{i_{ik}}-\beta_{j_{ik}})+(\theta_{i_{ik}}-\beta_j)\\
    =&\theta_i-\beta_j.
\end{align*}
Therefore, we can construct $g$ to be
\begin{align*}
g_j(x)=&-\frac{1}{N}\Big\{\sum_{i\in S_N(j)}x_{ij}\\
&+\sum_{i\in S_{N_{\phi}}(j)}\Big(x_{i,j_{i1}}-x_{i_{i1},j_{i1}}+x_{i_{i1},j_{i2}}-x_{i_{i2},j_{i2}}+...-x_{i_{ik},j_{ik}}+x_{i_{ik},j}\Big)\Big\}\\
=&-\frac{1}{N}\Big\{\Big(\sum_{i=1}^{N}\theta_i\Big)-N\beta_j\Big\}\\
=&\beta_j.
\end{align*}
Use $\|d'(g_j)\|_{\sigma}^2$ from Lemma \ref{lemma-d'n} to approximate $\sigma^2(g_j)$, with $w_{i+}=-1/N,$  for all $i=1,...,N$, $w_{+j}=-1, w_{+l}=0$ for all $l=1,...j-1,j+1,...,J$ and $w_{++}=-1$. It follows 
\begin{align*}
    \|d'(g_j)\|_{\sigma}^2&=(\sigma_{+j}^{2})^{-1}+\frac{1}{N^2}\sum_{i=1}^{N}(\sigma_{i+}^2)^{-1}+\frac{2}{N}\sum_{i \in S_{N}(j)}\sigma_{ij}^2(\sigma_{i+}^2)^{-1}(\sigma_{+j}^2)^{-1}-3(\sigma_{++}^2)^{-1}\\
    &=\left(\sigma_{+j}^{2}\right)^{-1}+O\left(N_{*}^{-1}J_{*}^{-1}\right)\quad \quad\text{as} \quad N\to \infty.
\end{align*}
To see whether $\|d'(g_j)\|_{\sigma}^2$ is a good approximation for $\sigma^2(g_j)$, we need to evaluate the order of $\|d''(g_j)\|_{\sigma}^2$ from Lemma \ref{lemma-d''n}. Note
\begin{align*}
l_i&=
    \begin{cases}
      \sigma_{ij}^2(\sigma_{+j}^2)^{-1}-\sigma_{i+}^2(\sigma_{++}^2)^{-1} &  \text{if} \quad z_{ij}=1 \\
       -\sigma_{i+}^2(\sigma_{++}^2)^{-1} & \text{if} \quad z_{ij}=0
    \end{cases}\\
    &=O\left(N_{*}^{-1}\right)\quad \quad\text{as} \quad N\to \infty,\quad\quad i=1,...,N.
\end{align*}
\begin{align*}
v_q&=\frac{1}{N}\sum_{i\in S_N(q)}\sigma_{iq}^2(\sigma_{i+}^2)^{-1}-\sigma_{+q}^2(\sigma_{++}^2)^{-1}\\
    &=O\left(J_{*}^{-1}\right)\quad \quad\text{as} \quad N\to \infty,\quad\quad q=1,...,J.
\end{align*}
Applying Lemma \ref{lemma-d''n}, we have
{\begin{eqnarray}
    \|d''(g_{j})\|_{\sigma}^2&\leq & \gamma_N^{-2}\Big\{\sum_{i=1}^Nl_i^2(\sigma_{i+}^2)^{-1}+\sum_{q=1}^Jv_q^2(\sigma_{+q}^2)^{-1}\Big\}^2 \nonumber \\
    &\leq & \gamma^{-2} \Big\{\sum_{i=1}^Nl_i^2(\sigma_{i+}^2)^{-1}+\sum_{q=1}^Jv_q^2(\sigma_{+q}^2)^{-1}\Big\}^2 \nonumber \\
    &=&o\left( N^{*-2}\right)\quad \quad\text{as} \quad N\to \infty, \nonumber
\end{eqnarray}}
{where the last equation follows from Condition \ref{cond:speed}(b)--(c).} 
Since $$\big(\|d'(g_j)\|_{\sigma}-\|d''(g_j)\|_{\sigma}\big)^2 \leq \sigma^2(g_j) \leq \big(\|d'(g_j)\|_{\sigma}+\|d''(g_j)\|_{\sigma}\big)^2,$$
It follows that var$(\hat{\beta}_j)=  (\sigma_{+j}^2)^{-1}+O\left(N_{*}^{-1}J_{*}^{-1}\right) \text{ as }  N\to \infty.
$
\end{proof}

%



{\sc Lemma \ref{lemma-sequence-and-bound-A-theta}.}
{\it Assume {Conditions \ref{cond:bound}--\ref{cond:speed2}} hold. If $A_{\theta}=\{g_{i}: i=1,...,N\}$ such that $g_i \in \Omega_{N}^*$ and $g_i(x)=\theta_i$ for $x \in \Omega_N$. Let $C_N=|A_{\theta}|=N$ be the cardinality of $A_{\theta}$.  Then for any positive sequence $f_N$ such that $f_N^2/\log N \to \infty$ and $J_{*}^{-1}f_N^2 \to 0$ as $N\to \infty,$ there exists a sequence $d_N \geq 0$ satisfying the followings. 

\noindent(a) If $y, v \in \Omega_N$ and $\|y-M_N^*\|_{\sigma}(A_{\theta})\leq f_N$, then there exists $n<\infty$ such that for all $ N > n$, $\|U_N(y, v)\|_{\sigma}(A_{\theta}) \leq d_N \|y-M_N^*\|_{\sigma}(A_{\theta})\|v\|_{\sigma}(A_{\theta}).$ 

\noindent(b). $d_Nf_N \to 0$ as  $N \to \infty$.
}

\begin{proof}
We first note that from Condition \ref{cond:speed}(a), $\log N \ll J_{*}$ as $N \to \infty.$ Therefore, the rate requirements for the sequence $f_N$, $f_N^2/\log N \to \infty$ and $J_{*}^{-1}f_N^2 \to 0$ as $N\to \infty,$ are valid. 
Now we seek to upper bound $\|U_N(y,v)\|_{\sigma}(A_{\theta})$ to find a sequence $d_N$ and then show that $d_Nf_N \to 0$ for any $f_N$ satisfying $f_N^2/\log N \to \infty$ and $J_{*}^{-1}f_N^2 \to 0$ as $N\to \infty.$
For any $y, v \in \Omega_N,$ by the definition of $\|\cdot\|_{\sigma}(A_{\theta})$, $$\|U_N(y,v)\|_{\sigma}(A_{\theta}) = \max_{g_i\in A_{\theta}} \vert g_i\{U_N(y,v)\} \vert/\sigma(g_i).$$ 
Note that by Lemma \ref{lemma-peoplevar}, we know that $\sigma^2(g_i)=(\sigma_{i+}^2)^{-1}+O\big\{N_{*}^{-1}J_{*}^{-1}\big\}$ for any $g_i \in A_{\theta}$. Hence, there exist positive $0<\gamma_1, \gamma_2<\infty$ such that for any $i=1,...,N,$
$$\gamma_1^{-1} {J^{*-1/2}}<\sigma(g_i) < \gamma_2^{-1} J_{*}^{-1/2}.$$
So we just need to find an upper bound for $\vert g_i\{U_N(y,v)\} \vert$ that holds for all $g_i \in A_{\theta}.$  
For any $g_i \in A_{\theta}$,  we have
\begin{align*}
\vert g_i\{U_N(y,v)\}\vert &= \Big\vert \sum_{i'=1}^N\sum_{j'\in S_J(i')}d_{i'j'}(g_i)\{\sigma^2(y_{i'j'})-\sigma_{i'j'}^2\}v_{i'j'}\Big\vert\\
&\leq \sum_{i'=1}^N\sum_{j'\in S_J(i')} \vert d_{i'j'}(g_i) \vert\cdot \vert\sigma^2(y_{i'j'})-\sigma_{i'j'}^2\vert\cdot\vert v_{i'j'}\vert.
\end{align*}
Since $\sigma^2(y_{ij}), \sigma_{ij}^2<1,$ so $\vert\sigma^2(y_{ij})-\sigma_{ij}^2\vert\leq 1$. It can be implied that there exists a positive $\gamma_3<\infty$ such that $\vert\sigma^2(y_{ij})-\sigma_{ij}^2\vert\leq \gamma_3\vert g_i(y-M_N^*)\vert.$ From the definition of $\|\cdot\|_{\sigma}(A_{\theta}), $ $\vert g_i(y-M_N^*)\vert \leq \|y-M_N^*\|_{\sigma}(A_{\theta})\sigma(g_i)$ for any $g_i\in A_{\theta}.$ Then it follows that
for any $i=1,...,N, j=1,...,J, z_{ij}=1,$ $$\vert\sigma^2(y_{ij})-\sigma_{ij}^2\vert\leq \gamma_2^{-1}\gamma_3 J_{*}^{-1/2}\|y-M_N^*\|_{\sigma}(A_{\theta}).$$
Using a similar strategy, we can also show that there exists a positive $\gamma_4<\infty$ such that for any $i=1,...,N, j=1,...,J, z_{ij}=1,$ $$\vert v_{ij}\vert \leq \gamma_2^{-1}\gamma_4 J_{*}^{-1/2}\|v\|_{\sigma}(A_{\theta}).$$
Similarly, we have 
\begin{align*}
\sum_{i'=1}^N\sum_{j'\in S_J(i')}\vert d_{i'j'}(g_i)\vert\leq \sum_{i'=1}^N\sum_{j'\in S_J(i')}\vert d_{i'j'}'(g_i)\vert +\sum_{i'=1}^N\sum_{j'\in S_J(i')}\vert d_{i'j'}''(g_i)\vert.
\end{align*}
By definition, we know $d_{i'j'}'=(\sigma_{i'+}^2)^{-1}w_{i'+}+(\sigma_{+j'}^2)^{-1}w_{+j'}-(\sigma_{++}^2)^{-1}w_{++}.$ 
For any $g_i \in A_{\theta},$ $w_{i'+}=1-1/N,$ if $i'=i$, and $w_{i'+}=-1/N$ for $i'\neq i$, $w_{+j'}=0$ for all $j'=1,...,J$ and  $w_{++}=0$. 
Hence, 
\begin{align*}
d_{i'j'}'(g_i)&= \begin{cases}
      (1-\frac{1}{N})(\sigma_{i'+}^2)^{-1}& \text{if} \quad i'=i \\
       -\frac{1}{N}(\sigma_{i'+}^2)^{-1}&\text{if} \quad i'\neq i.
    \end{cases}
\end{align*}
It follows
\begin{align*}
\sum_{i'=1}^N\sum_{j'\in S_J(i')}\vert d_{i'j'}'(g_i)\vert &=\sum_{j'\in S_J(i)}\Big(1-\frac{1}{N}\Big)(\sigma_{i+}^2)^{-1}+\sum_{i'=1, i'\neq i}^N\sum_{j'\in S_J(i')}\frac{1}{N}(\sigma_{i'+}^2)^{-1} \leq \gamma_5,
\end{align*}
for some positive $\gamma_5< \infty.$
On the other hand,
\begin{align*}
\sum_{i'=1}^N\sum_{j'\in S_J(i')}\vert d_{i'j'}''(g_j)\vert& \leq (N^*J^*)^{\frac{1}{2}}\|d''(g_i)\|_{\sigma} \leq \gamma_6,
\end{align*}
for some positive $\gamma_6 < \infty$. {The last step follows from Lemma \ref{lemma-peoplevar} which implies that $\|d''(g_j)\|_{\sigma}=o\left( N^{*-1}\right)$.} 
Overall,
\begin{align*}
\|U_N(y,v)\|_{\sigma}(A_{\theta}) &= \max_{g_i\in A_{\theta}} \vert g_i\{U_N(y,v)\} \vert/\sigma(g_i)\\
&\leq \max_{g_i\in A_{\theta}} \vert g_i\{U_N(y,v)\} \vert \cdot \max_{g_i\in A_{\theta}}\{\sigma^{-1}(g_i)\}\\
&\leq \gamma_1\gamma_2^{-2}\gamma_3\gamma_4(\gamma_5+\gamma_6)J_{*}^{-\frac{1}{2}}\|y-M_N^*\|_{\sigma}(A_{\theta})\|v\|_{\sigma}(A_{\theta}).
\end{align*}
So we can set $d_N=\gamma_1\gamma_2^{-2}\gamma_3\gamma_4(\gamma_5+\gamma_6)J_{*}^{-\frac{1}{2}}$. 
Furthermore, by the rate requirement of $f_N$, for any positive sequence $f_N$ such that $(\log N)^{1/2} \ll f_N\ll J_{*}^{1/2}$, we must have $d_Nf_N \to 0$ as $N\to \infty$. Therefore, both part (a) and part (b) of the lemma are satisfied.
\end{proof}

{\sc Lemma \ref{lemma-peoplevar}.}
{\it Assume {Conditions \ref{cond:bound}--\ref{cond:speed2}} hold and $\sum_{i=1}^N\theta_i=0$, the asymptotic variance of an individual row parameter, var$(\hat{\theta}_i)$, asymptotically attains oracle variance $(\sigma_{i+}^2)^{-1}$ in the sense that
\begin{align*}
 var(\hat{\theta}_i)=(\sigma_{i+}^2)^{-1}+O\left(N_{*}^{-1}J_{*}^{-1}\right)\quad\quad \text{as}\quad N\to \infty.\numberthis\label{eq:peoplevar}
\end{align*}
} 
\begin{proof}
We seek to construct a linear function $g_i \in \Omega_{N}^*$ such that $g_i(x)=\theta_i$ so that we can use $\|d'(g_i)\|_{\sigma}^2$ in Lemma \ref{lemma-d'n} to approximate var$(\hat{\theta}_i).$
Fix some $j \in S_J(i)$, i.e. $z_{ij}=1$, since Condition~\ref{cond:connect} holds, we can use the linear function $g_j$ constructed in the proof of Theorem~\ref{thm-existence} to represent $\beta_j$, i.e. $g_j(x)=\beta_j.$
Hence, $g_i$ can easily be constructed with
\begin{align*}
g_i(x)&=\frac{1}{\vert S_J(i)\vert }\sum_{j\in S_J(i)}\{x_{ij}+g_j(x)\}\\
&=\frac{1}{\vert S_J(i)\vert }\sum_{j\in S_J(i)}\Big[x_{ij}-\frac{1}{N}\Big\{\sum_{i'\in S_N(j)}x_{i'j}\\
&+\sum_{i'\in S_{N_{\phi}}(j)}\Big(x_{i',j_{i'1}}-x_{i'_{i'1},j_{i'1}}+x_{i'_{i'1},j_{i'2}}-x_{i'_{i'2},j_{i'2}}+...-x_{i'_{i'k},j_{i'k}}+x_{i'_{i'k},j}\Big)\Big\}\Big]\\
&=\theta_i.
\end{align*}
We use $\|d'(g_i)\|_{\sigma}^2$ from Lemma \ref{lemma-d'n} to approximate $\sigma^2(g_i)$ , with $w_{i+}=1-N^{-1}, w_{k+}=-N^{-1},$ for all $ k=1,...,i-1,i+1,...,N, w_{+j}=0,$ for all $j=1,...,J, w_{++}=0$, we obtain
\begin{align*}
\|d'(g_i)\|_{\sigma}^2
&= \big(1-\frac{1}{N}\big)^{2}(\sigma_{i+}^2)^{-1}+\frac{1}{N^2}\sum_{k=1, k\neq i}^{N}(\sigma_{k+}^2)^{-1}\\
&=(\sigma_{i+}^2)^{-1}+O\left(N_{*}^{-1}J_{*}^{-1}\right)\quad\quad \text{as}\quad N\to \infty.
\end{align*}
To see whether $\|d'(g_i)\|_{\sigma}^2$ is a good approximation for $\sigma^2(g_i)$, we evaluate the order of $\|d''(g_i)\|_{\sigma}^2$. Note that in this case
\begin{align*}
l_p&=0, \quad\quad p=1,...,N.\\
v_q&= \begin{cases}
      \frac{1}{N}\sum_{k \in S_N(q), k\neq i} \sigma_{kq}^2(\sigma_{k+}^2)^{-1}-(1-\frac{1}{N})\sigma_{iq}^2(\sigma_{i+})^{-1}& \text{if} \quad z_{iq}=1 \\
       \frac{1}{N}\sum_{k \in S_N(q)} \sigma_{kq}^2(\sigma_{k+}^2)^{-1} &\text{if} \quad z_{iq}=0
    \end{cases}\\
    &=O\left(J_{*}^{-1}\right)\quad\quad \text{as}\quad N\to \infty, \quad\quad q=1,...,J.
\end{align*}
It follows that
{\begin{eqnarray}
    \|d''(g_{i})\|_{\sigma}^2&\leq & \gamma_N^{-2}\Big\{\sum_{i=1}^Nl_i^2(\sigma_{i+}^2)^{-1}+\sum_{q=1}^Jv_q^2(\sigma_{+q}^2)^{-1}\Big\}^2 \nonumber \\
    &\leq & \gamma^{-2} \Big\{\sum_{i=1}^Nl_i^2(\sigma_{i+}^2)^{-1}+\sum_{q=1}^Jv_q^2(\sigma_{+q}^2)^{-1}\Big\}^2 \nonumber \\
    &=&o\left( N^{*-2}\right)\quad \quad\text{as} \quad N\to \infty, \nonumber
\end{eqnarray}}
{where the last equation follows from Condition \ref{cond:speed}(b)--(c).} Since $$\big(\|d'(g_i)\|_{\sigma}-\|d''(g_i)\|_{\sigma}\big)^2 \leq \sigma^2(g_i) \leq \big(\|d'(g_i)\|_{\sigma}+\|d''(g_i)\|_{\sigma}\big)^2,$$
it follows that var$(\hat{\theta}_i)= (\sigma_{i+}^2)^{-1}+O(N_{*}^{-1}J_{*}^{-1})$ {as} $N\to \infty.
$
\end{proof}

{\sc Lemma \ref{sufficient-condition-var-approx}.}
{\it Assume {Conditions \ref{cond:bound}--\ref{cond:speed2}} hold and $\sum_{i=1}^{N}\theta_i=0$. Consider a linear function $g: \Omega_N \mapsto \mathbb{R}$ with $g(M)=\sum_{i=1 }^Nh_i\theta_i + \sum_{j=1}^Jh_{j}'\beta_j.$ If there exists a positive $C < \infty$ such that $\sum_{i=1}^N\vert h_i\vert<C$ and $\sum_{j=1}^J \vert h_j'\vert <C$, then }
\begin{align*}
\sigma^2(g)=\sum_{i=1}^Nh_{i}^2(\sigma_{i+}^2)^{-1}+\sum_{j=1 }^Jh_{j}'^2(\sigma_{+j}^2)^{-1}+O\left(N_{*}^{-1}J_{*}^{-1}\right)\quad \text{as}\quad N\to \infty.
\end{align*}
\begin{proof}
By Proposition~\ref{prop:connect}, we can reexpress function $g$ in terms of $m_{ij}$ for $i=1,...,N, j=1,...,J, z_{ij}=1$ with $g(M_N)=\sum_{i=1}^N\sum_{j \in S_J(i)}w_{ij}(g)m_{ij}.$ In particular, we have,
\begin{align*}
&w_{i+}(g)=h_i\big(1-\frac{1}{N}\big) -\frac{1}{N}\sum_{i'=1, i'\neq i}^Nh_{i'}-\frac{1}{N} \sum_{j=1}^Jh_j'\quad\quad i=1,...,N,  \\
&w_{+j}(g)=-h_j',\quad\quad j=1,...,J,\\
&w_{++}(g)=-\sum_{j=1}^Jh_j'.
\end{align*}
We apply $\|d'(g)\|_{\sigma}^2$ from Lemma \ref{lemma-d'n} to approximate $\sigma^2(g)$. Note that 
\begin{align*}
\|d'(g)\|_{\sigma}^2=
&\sum_{i=1}^N w_{i+}^2(g)(\sigma_{i+}^2)^{-1}+\sum_{j=1}^J w_{+j}^2(g)(\sigma_{+j}^2)^{-1}\\
&+2\sum_{i=1}^N\sum_{j\in S_j(i)}\sigma_{ij}^2(\sigma_{i+}^2)^{-1} w_{i+}(g)(\sigma_{+j}^2)^{-1} w_{+j}(g)-3(\sigma_{++}^2)^{-1}w_{++}^2(g)\\
&=\sum_{i=1}^N h_{i}^2(\sigma_{i+}^2)^{-1}+\sum_{j= 1}^Jh_{j}'^2(\sigma_{+j}^2)^{-1}+O\left(N_{*}^{-1}J_{*}^{-1}\right) \quad \text{as}\quad N\to \infty,
\end{align*}
where the last step follows from the assumption that $\sum_{i=1}^N\vert h_i\vert<C$ and $\sum_{j=1}^J\vert h_j'\vert<C$.
To see whether $\|d'(g)\|_{\sigma}^2$ is a good approximation for $\sigma^2(g)$, we need to evaluate the order of $\|d''(g)\|_{\sigma}^2$.
Note that for $i=1,...,N,$
\begin{align*}
 l_i &= -\sum_{j \in S_J(i)}\sigma_{ij}^2(\sigma_{+j}^2)^{-1}w_{+j}(g)+\sigma_{i+}^2(\sigma_{++}^2)^{-1}w_{++}(g)\\
 &=\sum_{j \in S_J(i)}\sigma_{ij}^2(\sigma_{+j}^2)^{-1}h_j'-\sigma_{i+}^2(\sigma_{++}^2)^{-1}\sum_{j=1}^Jh_j'=O\left(N_{*}^{-1}\right) \quad \text{as}\quad N\to \infty,\numberthis\label{eq:order-li}
\end{align*}
where the last step follows from $\sum_{j=1}^J\vert h_j'\vert<C$.
Similarly for $j=1,...,J,$
\begin{align*}
 v_j 
 =&-\sum_{i \in S_N(j)}\sigma_{ij}^2(\sigma_{i+}^2)^{-1}w_{i+}(g)+\sigma_{+j}^2(\sigma_{++}^2)^{-1}w_{++}(g)\\
 =&-\sum_{i \in S_N(j)}\sigma_{ij}^2(\sigma_{i+}^2)^{-1}\Big\{h_i\big(1-\frac{1}{N}\big) -\frac{1}{N}\sum_{i'=1, i'\neq i}^Nh_{i'}-\frac{1}{N} \sum_{j=1}^Jh_j'\Big\}\\
 &-\sigma_{+j}^2(\sigma_{++}^2)^{-1}\sum_{j=1}^Jh_j'\\
 =&O\left(J_{*}^{-1}\right)\quad \text{as}\quad N\to \infty,\numberthis\label{eq:order-vj}
\end{align*}
where the last step follows from $\sum_{j=1}^J\vert h_j'\vert<C$ and $\sum_{i=1}^N\vert h_i\vert<C$.
Hence, we have 
\begin{align*}
    \|d''(g)\|_{\sigma}^2&\leq \gamma_N^{-2}\Big\{\sum_{i=1}^N l_i^2(\sigma_{i+}^2)^{-1}+\sum_{j=1}^Jv_j^2(\sigma_{+j}^2)^{-1}\Big\}^2\\
    &={o\left(N^{*-2}\right)}\quad \text{as}\quad N\to \infty,
\end{align*}
{where the last equation follows from \eqref{eq:order-li}, \eqref{eq:order-vj} and Condition \ref{cond:speed}(b)--(c).}
It follows that \begin{align*}
\sigma^2(g)=\sum_{i=1}^Nh_{i}^2(\sigma_{i+}^2)^{-1}+\sum_{j=1}^Jh_{j}'^2(\sigma_{+j}^2)^{-1}+O\left(N_{*}^{-1}J_{*}^{-1}\right)\quad \quad\text{as} \quad N\to \infty.
\end{align*}
Hence, the result of the lemma follows.
\end{proof}

{\sc Lemma \ref{lemma-sequence-and-bound-A-theta-beta}. }
{\it Assume {Conditions \ref{cond:bound}-- \ref{cond:speed2}} hold. If $A_{\theta, \beta}=\{g_i, g_j':i=1,...,N, j=1,...,J\}$ such that $g_i, g_j' \in \Omega_{N}^*$, and $g_i(x)=\theta_i$ and $g_j'(x)=\beta_j$ for $x \in \Omega_N$. Let $C_N=|A_{\theta, \beta}|$, the cardinality of $A_{\theta, \beta}$.  Then there exist sequences $f_N>0$ and $d_N\geq 0$ satisfying the followings.

\noindent(a). As $N \to \infty$, $f_N^2/\log C_N \to \infty.$ 

\noindent(b). If $y, v \in \Omega_N$ and $\|y-M_N^*\|_{\sigma}(A_{\theta, \beta})\leq f_N$, then there exists $n<\infty$ such that for all $ N > n$, $\|U_N(y, v)\|_{\sigma}(A_{\theta, \beta}) \leq d_N \|y-M_N^*\|_{\sigma}(A_{\theta, \beta})\|v\|_{\sigma}(A_{\theta, \beta}).$ Furthermore, $d_Nf_N^2 \to 0$ as  $N \to \infty$.
}

\begin{proof}
{From Condition \ref{cond:speed2}(a), we have $J_{*}^{-2}N_{*}(\log N)^2\to 0$ as $N \to \infty$}, there must exists a positive sequence $L_N$ such that $L_N \to \infty$ but $J_{*}^{-1}N_{*}^{1/2}(\log N) L_N\to 0$ as $N\to \infty.$ Furthermore, note that $$\log(C_N)=\log(N+J)\leq \log(2N)=\log(2)+\log(N) =O(\log(N))\quad \text{as} \quad N\to \infty.$$ 
Let $f_N^2=\{\log (N)\}L_N$. It is easy to see that the constructed $f_N$ satisfies part (a) of the lemma.

Now we consider part (b). We seek to find an upper bound for $\|U_N(y,z)\|_{\sigma}(A_{\theta,\beta})$ in order to find $d_N$ and then show that $d_Nf_N^2 \to 0$ as $N\to \infty.$
For any $y, v \in \Omega_N,$ by the definition of $\|\cdot\|_{\sigma}(A_{\theta,\beta})$, $$\|U_N(y,v)\|_{\sigma}(A_{\theta,\beta}) = \max_{f\in A_{\theta,\beta}} \vert f\{U_N(y,v)\} \vert/\sigma(f).$$ 
First note from \eqref{eq:itemvar} and \eqref{eq:peoplevar}, we know that for any $f \in A_{\theta, \beta}$, there exist $0<c_1, c_2<\infty$ such that for all $N>n$,
\begin{align*}
 c_1^{-1}N_{*}^{-\frac{1}{2}}<\sigma(f) < c_2^{-1}J_{*}^{-\frac{1}{2}}.  
\end{align*}
So we just need to find an upper bound for $\vert f\{U_N(y,v)\} \vert$ that holds for all $f \in A_{\theta,\beta}.$ Note that
\begin{align*}
\vert f\{U_N(y,v)\}\vert &= \Big\vert \sum_{i'=1}^N\sum_{j'\in S_J(i')}d_{i'j'}(f)\{\sigma^2(y_{i'j'})-\sigma_{i'j'}^2\}v_{i'j'}\Big\vert\\
&\leq \sum_{i'=1}^N\sum_{j'\in S_J(i')} \vert d_{i'j'}(f) \vert\cdot \vert\sigma^2(y_{i'j'})-\sigma_{i'j'}^2\vert\cdot\vert v_{i'j'}\vert.\numberthis\label{eq: beta-utility-bound} 
\end{align*}
Note $0\leq \sigma^2(y_{ij}), \sigma_{ij}^2 \leq 1,$ so $\vert\sigma^2(y_{ij})-\sigma_{ij}^2\vert\leq 1$.  
It can be implied that $\vert\sigma^2(y_{ij})-\sigma_{ij}^2\vert\leq c_3\vert f(y-M_N^*)\vert$ for some positive $c_3<\infty.$ By the definition of $\|\cdot\|_{\sigma}(A_{\theta, \beta})$, we have $\vert f(y-M_N^*)\vert\leq \|y-M_N^*\|_{\sigma}(A_{\theta, \beta})\sigma(f).$
Hence, it follows that for any $i=1,...,N, j=1,...,J, z_{ij}=1,$ $$\vert\sigma^2(y_{ij})-\sigma_{ij}^2\vert\leq c_2^{-1}c_3J_{*}^{-1/2}\|y-M_N^*\|_{\sigma}(A_{\theta, \beta}).$$ 
Using a similar strategy, we can show that there exists a positive $c_4<\infty$ such that for any $i=1,...,N, j=1,...,J, z_{ij}=1,$
$$\vert v_{ij}\vert \leq c_2^{-1}c_4J_{*}^{-1/2}\|v\|_{\sigma}(A_{\theta, \beta}).$$ Further, note also that 
\begin{align*}
\sum_{i'=1}^N\sum_{j'\in S_J(i')}\vert d_{i'j'}(f)\vert\leq \sum_{i'=1}^N\sum_{j'\in S_J(i')}\vert d_{i'j'}'(f)\vert +\sum_{i'=1}^N\sum_{j'\in S_J(i')}\vert d_{i'j'}''(f)\vert.
\end{align*}
By definition, $d_{i'j'}'=(\sigma_{i'+}^2)^{-1}w_{i'+}+(\sigma_{+j'}^2)^{-1}w_{+j'}-(\sigma_{++}^2)^{-1}w_{++}.$ 
For any $f \in A_{\theta, \beta},$ either $f=g_j'$ or $f=g_i$. When $f=g_j'$, $w_{i'+}=-1/N,$ for $i'=1,...,N,$ $w_{+j'}=-1$ if $j'=j$ and $w_{+j'}=0$ if $j'\neq j,$ $w_{++}=-1$. 
Hence, 
\begin{align*}
d_{i'j'}'(g_j')&= \begin{cases}
      -\frac{1}{N}(\sigma_{i'+}^2)^{-1}-(\sigma_{+j'}^2)^{-1}+(\sigma_{++}^2)^{-1}& \text{if} \quad j'=j \\
       -\frac{1}{N}(\sigma_{i'+}^2)^{-1}+(\sigma_{++}^2)^{-1} &\text{if} \quad j'\neq j
    \end{cases}
\end{align*}
It follows
\begin{align*}
\sum_{i'=1}^N\sum_{j'\in S_J(i')}\vert d_{i'j'}'(g_j')\vert &\leq  \frac{J^*}{N}\sum_{i'=1}^N(\sigma_{i'+}^2)^{-1}+N^*(\sigma_{+j}^2)^{-1}+\sum_{i'=1}^N\sum_{j'\in S_J(i')}(\sigma_{++}^2)^{-1} \leq c_5,
\end{align*}
for some positive $c_5< \infty.$ Furthermore,
\begin{align*}
\sum_{i'=1}^N\sum_{j'\in S_J(i')}\vert d_{i'j'}''(g_j')\vert& \leq (N^*J^*)^{\frac{1}{2}}\|d''(g_j')\|_{\sigma} \leq c_6,
\end{align*}
for some positive $c_6 < \infty$. {The last step follows from Lemma \ref{lemma-itemvar} which implies that $\|d''(g_j')\|_{\sigma}=o(N^{*-1})$.}
On the other hand, when $f=g_i,$ we have $w_{i'+}=1-1/N,$ if $i'=i$, and $w_{i'+}=-1/N$ for $i'\neq i$, $w_{+j'}=0$ for all $j'=1,...,J$ and  $w_{++}=0$. 
Hence, 
\begin{align*}
d_{i'j'}'(g_i)&= \begin{cases}
      (1-\frac{1}{N})(\sigma_{i'+}^2)^{-1}& \text{if} \quad i'=i \\
       -\frac{1}{N}(\sigma_{i'+}^2)^{-1}&\text{if} \quad i'\neq i.
    \end{cases}
\end{align*}
It follows
\begin{align*}
\sum_{i'=1}^N\sum_{j'\in S_J(i')}\vert d_{i'j'}'(g_i)\vert &=\sum_{j'\in S_J(i)}\Big(1-\frac{1}{N}\Big)(\sigma_{i+}^2)^{-1}-\sum_{i'=1, i'\neq i}^N\sum_{j'\in S_J(i')}\frac{1}{N}(\sigma_{i'+}^2)^{-1} \leq c_7,
\end{align*}
for some positive $c_7< \infty.$ Furthermore,
\begin{align*}
\sum_{i'=1}^N\sum_{j'\in S_J(i')}\vert d_{i'j'}''(g_i)\vert& \leq (N^*J^*)^{\frac{1}{2}}\|d''(g_i)\|_{\sigma} \leq c_8,
\end{align*}
for some positive $c_8 < \infty$. {The last step follows from Lemma \ref{lemma-peoplevar} which implies that $\|d''(g_i)\|_{\sigma}=o(N^{*-1})$.}
Overall, 
\begin{align*}
\|U_N(y,v)\|_{\sigma}(A_{\theta,\beta})& = \max_{f\in A_{\theta,\beta}} \vert f\{U_N(y,v)\} \vert/\sigma(f)\\
&\leq \max_{f\in A_{\theta,\beta}} \vert f\{U_N(y,v)\} \vert \max_{f\in A_{\theta,\beta}} \{\sigma(f)^{-1}\}\\
&\leq c_1c_2^{-2}c_3c_4\max\{c_5+c_6, c_7+c_8\} J_{*}^{-1}N_{*}^{\frac{1}{2}}\|y-M_N^*\|_{\sigma}(A_{\theta, \beta})\|v\|_{\sigma}(A_{\theta, \beta}).
\end{align*}
Note that in this case we can take $d_N=c_1c_2^{-2}c_3c_4\max\{c_5+c_6, c_7+c_8\} J_{*}^{-1}N_{*}^{1/2}.$ 
We have $$d_Nf_N^2=c_1c_2^{-2}c_3c_4\max\{c_5+c_6, c_7+c_8\} J_{*}^{-1}N_{*}^{1/2}\log(N)L_N \to 0\quad \text{as}\quad N\to \infty.$$  Hence both parts (a) and (b) of the lemma are satisfied.
\end{proof}

\section*{Appendix C: Full Senator Rankings}\label{appendix-senator-ranking}
Appendix C includes additional results for Section \ref{sec-real-data2} ``Application to Senate Voting'' of the main article.
In specific, with the same set-up as in Section \ref{sec-real-data2}, we give a full list of rankings for senators serving the 111th, the 112th and the 113th United States senate according to their conservativeness scores. The results are summarized in Tables \ref{table: ranking-62} and \ref{table: ranking-63-139} below. We observe from Table \ref{table: ranking-62} that all the top 62 most conservative senators predicted by the model are Republicans. While the Democrats and the independent politicians are predicted to have much lower conservativeness scores as presented in Table \ref{table: ranking-63-139}. This aligns well with the public perceptions about the Republican party and the Democratic party. Standard errors of the estimated row parameters (i.e. senator's conservativeness score) are also included to facilitate inferences.
\begin{table}[!ht]
\centering
\begin{tabular}{p{0.6cm}p{1.5cm}p{0.6cm}p{0.7cm}cp{0.8cm}|p{0.6cm}p{1.5cm}p{0.6cm}p{0.7cm}c p{0.8cm}}
  \hline
 Rank& Senator & State & Party &  $\hat{\theta}$ & s.e.$(\hat{\theta}$)&Rank&Senator & State & Party &  $\hat{\theta}$  & s.e.$(\hat{\theta}$)  \\ 
  \hline
1 & Demint      & SC & Rep & 5.87 & 0.157 & 2 & Lee         & UT & Rep & 5.73 & 0.138 \\ 
   3 & Cruz        & TX & Rep & 5.65 & 0.195 & 4 & Coburn      & OK & Rep & 5.25 & 0.114 \\ 
   5 & Paul        & KY & Rep & 5.24 & 0.129 & 6 & Scott       & SC & Rep & 5.17 & 0.176 \\ 
   7 & Bunning     & KY & Rep & 4.92 & 0.204 & 8 & Johnson     & WI & Rep & 4.84 & 0.119 \\ 
   9 & Risch       & ID & Rep & 4.81 & 0.102 & 10 & Inhofe      & OK & Rep & 4.69 & 0.103 \\ 
   11 & Crapo       & ID & Rep & 4.56 & 0.097 & 12 & Sessions    & AL & Rep & 4.48 & 0.096 \\ 
   13 & Enzi        & WY & Rep & 4.36 & 0.094 & 14 & Barasso     & WY & Rep & 4.35 & 0.094 \\ 
   15 & Cornyn      & TX & Rep & 4.33 & 0.095 & 16 & Rubio       & FL & Rep & 4.25 & 0.112 \\ 
   17 & Ensign      & NV & Rep & 4.24 & 0.166 & 18 & Vitter      & LA & Rep & 4.20 & 0.094 \\ 
   19 & Fischer     & NE & Rep & 4.14 & 0.145 & 20 & Toomey      & PA & Rep & 4.12 & 0.109 \\ 
   21 & Kyl         & AZ & Rep & 4.10 & 0.115 & 22 & Roberts     & KS & Rep & 4.06 & 0.091 \\ 
   23 & Mcconnell   & KY & Rep & 4.02 & 0.089 & 24 & Thune       & SD & Rep & 3.95 & 0.088 \\ 
   25 & Burr        & NC & Rep & 3.95 & 0.090 & 26 & Moran       & KS & Rep & 3.89 & 0.109 \\ 
   27 & Grassley    & IA & Rep & 3.80 & 0.086 & 28 & Shelby      & AL & Rep & 3.78 & 0.086 \\ 
   29 & Boozman     & AR & Rep & 3.68 & 0.105 & 30 & Chambliss   & GA & Rep & 3.65 & 0.087 \\ 
   31 & Mccain      & AZ & Rep & 3.65 & 0.086 & 32 & Brownback   & KS & Rep & 3.61 & 0.153 \\ 
   33 & Coats       & IN & Rep & 3.51 & 0.101 & 34 & Johanns     & NE & Rep & 3.39 & 0.082 \\ 
   35 & Isakson     & GA & Rep & 3.38 & 0.082 & 36 & Hatch       & UT & Rep & 3.38 & 0.083 \\ 
   37 & Lemieux     & FL & Rep & 3.34 & 0.188 & 38 & Blunt       & MO & Rep & 3.31 & 0.099 \\ 
   39 & Wicker      & MS & Rep & 3.29 & 0.080 & 40 & Portman     & OH & Rep & 3.28 & 0.098 \\ 
   41 & Corker      & TN & Rep & 3.27 & 0.080 & 42 & Heller      & NV & Rep & 3.26 & 0.100 \\ 
   43 & Hutchison   & TX & Rep & 3.25 & 0.105 & 44 & Graham      & SC & Rep & 3.18 & 0.080 \\ 
   45 & Flake       & AZ & Rep & 3.03 & 0.125 & 46 & Ayotte      & NH & Rep & 3.02 & 0.095 \\ 
   47 & Hoeven      & ND & Rep & 2.97 & 0.094 & 48 & Bennett     & UT & Rep & 2.74 & 0.127 \\ 
   49 & Alexander   & TN & Rep & 2.71 & 0.075 & 50 & Kirk        & IL & Rep & 2.67 & 0.105 \\ 
   51 & Cochran     & MS & Rep & 2.63 & 0.075 & 52 & Chiesa      & NJ & Rep & 2.61 & 0.343 \\ 
   53 & Gregg       & NH & Rep & 2.59 & 0.127 & 54 & Martinez    & FL & Rep & 2.47 & 0.186 \\ 
   55 & Lugar       & IN & Rep & 2.29 & 0.088 & 56 & Bond        & MO & Rep & 2.25 & 0.118 \\ 
   57 & Murkowski   & AK & Rep & 1.47 & 0.066 & 58 & Brown       & MA & Rep & 1.29 & 0.103 \\ 
   59 & Voinovich   & OH & Rep & 1.22 & 0.102 & 60 & Snowe       & ME & Rep & 1.06 & 0.080 \\ 
   61 & Specter     & PA & Rep & 1.03 & 0.192 & 62 & Collins     & ME & Rep & 0.82 & 0.064 \\ 
  \hline
\end{tabular}
\caption{Ranking of the top 62 most conservative senators predicted by the model. Rep represents the Republican party and the states are listed in their standard abbreviations. $\hat{\theta}$ represents the conservativeness score of senators and s.e.$(\hat{\theta})$ is the standard error of the estimated conservativeness score.}
\label{table: ranking-62}
\end{table}

\begin{table}[!ht]
\small
\centering
\begin{tabular}{p{0.6cm}p{1.5cm}p{0.6cm}p{0.7cm}cp{0.8cm}|p{0.6cm}p{1.5cm}p{0.6cm}p{0.7cm}cp{0.8cm}}
  \hline
 Rank& Senator & State & Party &  $\hat{\theta}$ & s.e.$(\hat{\theta}$)&Rank&Senator & State & Party &  $\hat{\theta}$  & s.e.$(\hat{\theta}$)  \\ 
  \hline
63 & Nelson  & NE & Dem & -0.05 & 0.084 & 64 & Bayh        & IN & Dem & -0.13 & 0.104 \\ 
   65 & Manchin     & WV & Dem & -0.66 & 0.099 & 66 & Feingold    & WI & Dem & -0.92 & 0.115 \\ 
   67 & Lincoln     & AR & Dem & -0.96 & 0.119 & 68 & Mccaskill   & MO & Dem & -1.15 & 0.083 \\ 
   69 & Webb        & VA & Dem & -1.49 & 0.108 & 70 & Pryor       & AR & Dem & -1.63 & 0.094 \\ 
   71 & Lieberman   & CT & Dem & -1.68 & 0.113 & 72 & Heitkamp    & ND & Dem & -1.87 & 0.183 \\ 
   73 & Donnelly    & IN & Dem & -1.87 & 0.182 & 74 & Hagan       & NC & Dem & -1.90 & 0.100 \\ 
   75 & Byrd  & WV & Dem & -2.00 & 0.217 & 76 & Warner      & VA & Dem & -2.06 & 0.105 \\ 
   77 & Landrieu    & LA & Dem & -2.07 & 0.106 & 78 & Tester      & MT & Dem & -2.11 & 0.105 \\ 
   79 & Baucus      & MT & Dem & -2.11 & 0.112 & 80 & Bennet      & CO & Dem & -2.16 & 0.107 \\ 
   81 & Klobuchar   & MN & Dem & -2.26 & 0.109 & 82 & Conrad      & ND & Dem & -2.29 & 0.131 \\ 
   83 & King        & ME & Ind & -2.30 & 0.208 & 84 & Nelson      & FL & Dem & -2.32 & 0.112 \\ 
   85 & Kohl        & WI & Dem & -2.34 & 0.131 & 86 & Carper      & DE & Dem & -2.36 & 0.112 \\ 
   87 & Udall       & CO & Dem & -2.39 & 0.113 & 88 & Begich      & AK & Dem & -2.43 & 0.116 \\ 
   89 & Dorgan      & ND & Dem & -2.44 & 0.167 & 90 & Reid        & NV & Dem & -2.68 & 0.122 \\ 
   91 & Shaheen     & NH & Dem & -2.76 & 0.125 & 92 & Kaine       & VA & Dem & -2.80 & 0.246 \\ 
   93 & Casey       & PA & Dem & -2.83 & 0.127 & 94 & Cantwell    & WA & Dem & -2.84 & 0.127 \\ 
   95 & Coons       & DE & Dem & -2.84 & 0.170 & 96 & Specter     & PA & Dem & -2.84 & 0.222 \\ 
   97 & Walsh       & MT & Dem & -2.85 & 0.395 & 98 & Wyden       & OR & Dem & -2.97 & 0.132 \\ 
   99 & Bingaman    & NM & Dem & -3.03 & 0.155 & 100 & Johnson     & SD & Dem & -3.09 & 0.137 \\ 
   101 & Stabenow    & MI & Dem & -3.11 & 0.137 & 102 & Cowan       & MA & Dem & -3.19 & 0.439 \\ 
   103 & Merkley     & OR & Dem & -3.19 & 0.140 & 104 & Sanders     & VT & Ind & -3.23 & 0.143 \\ 
   105 & Feinstein   & CA & Dem & -3.24 & 0.143 & 106 & Kerry & MA & Dem & -3.25 & 0.165 \\ 
   107 & Kaufman     & DE & Dem & -3.28 & 0.219 & 108 & Murray      & WA & Dem & -3.29 & 0.143 \\ 
   109 & Heinrich    & NM & Dem & -3.30 & 0.290 & 110 & Menendez    & NJ & Dem & -3.32 & 0.144 \\ 
   111 & Inouye      & HI & Dem & -3.33 & 0.169 & 112 & Boxer       & CA & Dem & -3.35 & 0.148 \\ 
   113 & Dodd        & CT & Dem & -3.38 & 0.218 & 114 & Warren      & MA & Dem & -3.45 & 0.307 \\ 
   115 & Levin  & MI & Dem & -3.52 & 0.152 & 116 & Blumenthal  & CT & Dem & -3.52 & 0.214 \\ 
   117 & Kirk        & MA & Dem & -3.54 & 0.716 & 118 & Akaka       & HI & Dem & -3.54 & 0.174 \\ 
   119 & Franken     & MN & Dem & -3.55 & 0.166 & 120 & Rockefeller & WV & Dem & -3.56 & 0.161 \\ 
   121 & Mikulski    & MD & Dem & -3.60 & 0.158 & 122 & Leahy       & VT & Dem & -3.63 & 0.158 \\ 
   123 & Harkin      & IA & Dem & -3.64 & 0.158 & 124 & Lautenberg  & NJ & Dem & -3.65 & 0.179 \\ 
   125 & Schumer     & NY & Dem & -3.65 & 0.159 & 126 & Reed        & RI & Dem & -3.67 & 0.157 \\ 
   127 & Gillibrand  & NY & Dem & -3.67 & 0.158 & 128 & Murphy      & CT & Dem & -3.68 & 0.327 \\ 
   129 & Markey      & MA & Dem & -3.73 & 0.465 & 130 & Whitehouse  & RI & Dem & -3.74 & 0.163 \\ 
   131 & Cardin      & MD & Dem & -3.82 & 0.163 & 132 & Durbin      & IL & Dem & -3.83 & 0.164 \\ 
   133 & Udall       & NM & Dem & -3.85 & 0.165 & 134 & Brown       & OH & Dem & -3.89 & 0.168 \\ 
   135 & Baldwin     & WI & Dem & -3.90 & 0.352 & 136 & Booker      & NJ & Dem & -4.14 & 0.572 \\ 
   137 & Hirono      & HI & Dem & -4.17 & 0.383 & 138 & Burris      & IL & Dem & -4.43 & 0.297 \\ 
   139 & Schatz      & HI & Dem & -4.74 & 0.468 &  &  &  &  & &  \\ 
  \hline
\end{tabular}
\caption{Ranking of the top 63-139 most conservative senators predicted by the model. Dem and Ind represent the Democratic party and independent politician, respectively. The states are presented in their standard abbreviations.  $\hat{\theta}$ represents the conservativeness score of senators and s.e.$(\hat{\theta})$ is the standard error of the estimated conservativeness score. }
\label{table: ranking-63-139}
\end{table}

\end{document}